\documentclass[12pt,a4paper,final]{amsart} 

\usepackage{amsmath,amsfonts,amssymb,amsthm} 
\usepackage[final]{graphicx} 
\usepackage[notref,notcite]{showkeys} 
\usepackage{comment}
\usepackage[left=3cm,right=3cm,top=2cm,bottom=3cm]{geometry}
\usepackage{mathtools}
\usepackage{paralist}
\usepackage{fancyhdr}
\usepackage{tikz}
\usepackage{libertine} 
\usepackage{libertinust1math}
\usepackage[T1]{fontenc}
\usepackage{float}
\usepackage{url} 
\usepackage{hyperref}
\frenchspacing 
\usepackage[obeyDraft]{todonotes}



\theoremstyle{plain}
\newtheorem{thm}{Theorem}
\newtheorem{prop}[thm]{Proposition}
\newtheorem{lemma}[thm]{Lemma}
\newtheorem{cor}[thm]{Corollary}
\theoremstyle{definition}
\newtheorem{defi}[thm]{Definition}
\newtheorem{rem}[thm]{Remark}
\newtheorem{ex}[thm]{Example}
\newtheorem*{claim}{Claim}



\newcommand{\N}{\mathbb N} 
\newcommand{\Z}{\mathbb Z} 
\newcommand{\R}{\mathbb R} 
\newcommand{\T}{\mathbb T} 

\DeclareMathOperator{\Image}{Image}
\DeclareMathOperator{\Flux}{Flux}
\DeclareMathOperator{\PSS}{PSS}
\DeclareMathOperator{\Ham}{Ham}
\DeclareMathOperator{\conv}{conv}
\DeclareMathOperator{\Hess}{Hess}
\DeclareMathOperator{\supp}{Supp}

\DeclareMathOperator{\sh}{Sh}
\DeclareMathOperator{\Symp}{Symp}
\DeclareMathOperator{\graph}{Graph}
\DeclareMathOperator{\id}{id} 

\title{Mather theory and symplectic rigidity}
\date{}
\author{Mads R. Bisgaard}
\address{Mads Bisgaard, D-MATH, ETH Z{\"u}rich, R{\"a}mistrasse 101, 8092 Z{\"u}rich, Switzerland}
\email{madsbisgaard88@hotmail.com}
\begin{document}
\maketitle

\begin{abstract}
Using methods from symplectic topology, we prove existence of invariant variational measures associated to the flow $\phi_H$ of a Hamiltonian $H\in C^{\infty}(M)$ on a symplectic manifold $(M,\omega)$. These measures coincide with Mather measures (from Aubry-Mather theory) in the Tonelli case. We compare properties of the supports of these measures to classical Mather measures and we construct an example showing that their support can be extremely unstable when $H$ fails to be convex, even for nearly integrable $H$. Parts of these results extend work by Viterbo \cite{Viterbo10}, and Vichery \cite{Vichery14}. 

Using ideas due to Entov-Polterovich \cite{EntovPolterovich17}, \cite{Polterovich14} we also detect interesting invariant measures for $\phi_H$ by studying a generalization of the \emph{symplectic shape} of sublevel sets of $H$. This approach differs from the first one in that it works also for $(M,\omega)$ in which every compact subset can be displaced. We present applications to Hamiltonian systems on $\R^{2n}$ and twisted cotangent bundles.
\end{abstract}

\keywords{Key words: Spectral invariants, Hamiltonian systems, Invariant measures}

\subjclass{2010 AMS Subject Classification: 37J05, 37J25, 37J40, 37J50, 37D40, 37D12}

\section{Introduction}
Consider a symplectic manifold $(M^{2n},\omega)$ and a Hamiltonian $H\in C^{\infty}(M)$ with complete flow $\phi_H=\{\phi_H^t\}_{t\in \R}$. This paper concerns the question of how to systematically determine interesting invariant sets for $\phi_H$. The celebrated KAM theorem asserts (loosely speaking) that, if $H$ is non-degenerate and sufficiently regular, an invariant Lagrangian torus, on which the dynamics of $\phi_H$ has a Diophantine rotation vector, will persist under small (sufficiently regular) perturbations of $H$. Mather \cite{Mather91} discovered that one can loosen the regularity assumptions and the Diophantine condition in the KAM theorem and still find plenty of interesting invariant sets which persist perturbations of $H$, if one pays the price of replacing the non-degeneracy condition with a stronger \emph{convexity} assumption. One of the aims of the current paper is to study, using methods coming from symplectic topology, what happens when one relaxes the convexity assumption. It is interesting to understand the connection between Aubry-Mather theory and symplectic topology. Here we hope to provide some understanding of how pseudo-holomorphic curve techniques can explain phenomena from Aubry-Mather theory. A first indication that these theories can be connected was obtained by Bernard \cite{Bernard07}. The approach we take here builds heavily on Viterbo's symplectic homogenization paper \cite{Viterbo08} as well as the Floer-homological version studied by Monzner-Vichery-Zapolsky \cite{ZapolskyMonznerVichery12} and Vichery \cite{Vichery14}. The second part of the paper obtains interesting invariant measures by studying a generalization of the notion of symplectic shape (due to Sikorav \cite{Sikorav89}) of sublevel sets of a Hamiltonian. It employs ideas due to Buhovsky-Entov-Polterovich \cite{BuhovskyEntovPolterovich12}, Entov-Polterovich \cite{EntovPolterovich17} and Polterovich \cite{Polterovich14}. 

The contents of the paper are as follows: In Section \ref{MatherFloer} we present our results on existence of invariant measures using homogenized Lagrangian spectral invariants. Section \ref{Mathersec} compares properties of these measures to Mather measures. Section \ref{secC0} contains our results on invariant measures using the $C^0$-techniques developed by Buhovsky-Entov-Polterovich. Finally, Section \ref{seqprem} and \ref{secproof} contain preliminaries and proofs.        

\vspace*{0.3cm}
\noindent
\textit{Setting and notation:} $\mathcal{M}(X)$ will denote the space of Borel probability measures which are \emph{compactly} supported on $X\subset M$. We use the convention that the symplectic gradient $X_{H_t}$ associated to $H\in C^{\infty}(\R \times M)$ is defined by $\iota_{X_{H_t}}\omega=-dH_t$. The flow generated by $X_{H_t}$ is denoted by $\phi_H=\{\phi_H^t\}_{t\in \R}$. We denote by $\mathcal{M}(X;\phi_H)\subset \mathcal{M}(X)$ the subset of $\phi_H$-invariant measures and write $\mathcal{M}=\mathcal{M}(M)$ as well as $\mathcal{M}(\phi_H)=\mathcal{M}(M;\phi_H)$.  

\section{Symplectic "Mather-Floer theory"}
\label{MatherFloer}
Throughout this section $(M^{2n},\omega)$ will denote a closed monotone symplectic manifold.\footnote{In order to avoid certain compactness issues, $M$ will be assumed compact in this section. See Section \ref{noncompact} for the non-compact (exact) case.} We consider a closed \emph{monotone} Lagrangian submanifold $L\subset (M,\omega)$ which is non-narrow in the sense that its quantum homology $QH_*(L;\Z_2)$ doesn't vanish (see Section \ref{seqprem}). In this setting Leclercq-Zapolsky \cite{LeclercqZapolsky15} recently developed a theory of \emph{Lagrangian spectral invariants}. We will denote by  
\[
l_L:\widetilde{\Ham}(M,\omega) \to \R.
\]
the Leclercq-Zapolsky spectral invariant associated to the unity in $QH_*(L;\Z_2)$.\footnote{In \cite{LeclercqZapolsky15} our $l_L$ goes under the name $l_+$.} An idea due to Viterbo \cite{Viterbo08} says that homogenizing $l_L$ gives rise to an analogue of Mather's $\alpha$-function. The approach we consider here was first studied by Monzner-Vichery-Zapolsky \cite{ZapolskyMonznerVichery12} and Vichery \cite{Vichery14}. Denote by $\mathcal{H}:=\{H\in C^{\infty}(M)\ |\ \int H \omega^n=0\}$ the space of \emph{normalized} Hamiltonians.

\begin{defi}
\label{def2}
Associated to $H\in \mathcal{H}$ we define a function $\sigma_{H:L}:H^1(M;\R) \to \R$ by
\begin{equation}
\label{eq2}
\sigma_{H:L}(c)=\lim_{\N\ni k\to \infty}\frac{l_L(\psi_1^{-1}\tilde{\phi}^k_H \psi_1)}{k},
\end{equation}
where $\psi:[0,1]\times M \to M$ is any smooth symplectic isotopy with $\psi_0=\id$ and $\Flux(\psi)=c\in H^1(M;\R)$, and $\tilde{\phi}_H^k\in \widetilde{\Ham}(M,\omega)$ denotes the element represented by the path $[0,1]\ni t\mapsto \phi_H^{tk}$ in $\Ham(M,\omega)$.
\end{defi}
For more details on the construction and properties of $\sigma_{H:L}$ we refer to Section \ref{seqprem} (see in particular Theorem \ref{prop1} on page \pageref{prop1}). It turns out that $\sigma_{H:L}$ is locally Lipschitz, so at every point $c\in H^1(M;\R)$ it has a well-defined non-empty set of \emph{Clarke subdifferentials} (see Section \ref{secmeas})
\[
\partial \sigma_{H:L}(c)\subset H_1(M;\R)=H^1(M;\R)^*.
\]
The dynamical information contained in $\sigma_{H:L}$ is that it guarantees existence of analogues of \emph{Mather measures} (from Aubry-Mather theory) to the present setting, i. e. $\mathcal{M}(\phi_H)$-measures whose rotation vector (or asymptotic cycle, see Section \ref{secmeas}) is prescribed by $\partial \sigma_{H:L}(c)$. 
\begin{thm}
\label{thm1}
Let $H\in \mathcal{H}$. For any $c\in H^1(M;\R)$ and any Clarke subdifferential $h\in \partial \sigma_{H:L}(c)\subset H_1(M;\R)$ of $\sigma_{H:L}$ at $c$, there exists a $\mu \in \mathcal{M}(\phi_H)$ which has rotation vector 
\[
\rho(\mu)=h.
\] 
\end{thm}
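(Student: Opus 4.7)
My plan combines the variational structure of the Clarke subdifferential with a Hahn-Banach construction of the invariant measure, in the spirit of Vichery \cite{Vichery14} and Entov-Polterovich. The argument has four stages: reducing to differentiability points of $\sigma_{H:L}$ in the variable $c$, constructing a candidate measure $\mu$ from a subdifferential in the Hamiltonian direction, checking $\phi_H$-invariance, and finally identifying $\rho(\mu)=h$.

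By Rademacher's theorem the locally Lipschitz function $\sigma_{H:L}$ is differentiable on a full-measure subset $D\subset H^1(M;\R)$, and Clarke's definition expresses $\partial\sigma_{H:L}(c)$ as the closed convex hull of limits $\lim_n d\sigma_{H:L}(c_n)$ with $D\ni c_n\to c$. Since $M$ is closed, $\mathcal{M}(\phi_H)$ is weak-$\ast$ compact and $\rho$ is continuous and affine, so $R:=\rho(\mathcal{M}(\phi_H))$ is compact and convex in $H_1(M;\R)$. It therefore suffices to realize $d\sigma_{H:L}(c)$ as a rotation vector at every $c\in D$.

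Fix such a $c$ and a smooth symplectic isotopy $\psi$ with $\Flux(\psi)=c$. The map $H\mapsto\sigma_{H:L}(c)$ is locally Lipschitz on $\mathcal{H}$; its Clarke subdifferential in $C^\infty(M)^*$ is non-empty, and the monotonicity and normalization inherited from $l_L$ guarantee that every element of this subdifferential is a Borel probability measure on $M$. A Hahn-Banach argument produces a specific $\mu$ in this subdifferential adapted to the direction $c'$ introduced below. Hamiltonian conjugation-invariance of $l_L$ yields $\int(K\circ\phi_H^s-K)\,d\mu=0$ for all $K\in C^\infty(M)$ and $s\in\R$, so $\mu\in\mathcal{M}(\phi_H)$.

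To identify $\rho(\mu)=h$, fix $c'\in H^1(M;\R)$ with closed representative $\alpha$ and let $\psi^s_\alpha$ be a symplectic isotopy with $\Flux(\psi^s_\alpha)=s[\alpha]$. Substituting $\psi\,\psi^s_\alpha$ for $\psi$ in Definition \ref{def2}, the conjugated element $(\psi\psi^s_\alpha)_1^{-1}\tilde{\phi}_H^k(\psi\psi^s_\alpha)_1$ is the time-$k$ flow of a Hamiltonian whose $s$-derivative at $0$ equals $\alpha(X_H)\circ\psi_1$ up to sign (via the identity $dH(X_\alpha)=-\alpha(X_H)$). Combining this with differentiability of $\sigma_{H:L}$ at $c$ and the construction of $\mu$ yields
\[
\langle c',h\rangle=\frac{d}{ds}\Big|_{s=0}\sigma_{H:L}(c+sc')=\int_M\alpha(X_H)\,d\mu=\langle c',\rho(\mu)\rangle,
\]
so $\rho(\mu)=h$. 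The main obstacle is this last step: controlling how $l_L$ transforms under conjugation by the non-Hamiltonian symplectic isotopy $\psi^s_\alpha$ requires a careful interpolation between $L$ and $\psi^s_\alpha(L)$, and will rely on the monotonicity of $L$ together with the Leclercq-Zapolsky continuity estimates in an essential way.
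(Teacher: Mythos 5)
Your proposal takes a genuinely different route from the paper. The paper constructs the measures directly from Floer trajectories: the key technical input is Proposition \ref{prop2}, which uses Usher's theorem and a degeneration of a Floer continuation map as the flux perturbation shrinks to zero to produce a $\phi_H$-orbit $\gamma(t)=\phi_H^t(x)$ of length $k$ with $x,\phi_H^k(x)\in\psi_0(L)$ and $\langle c',h\rangle\leq\frac{1}{k}\int_0^k\langle dH,X_0\rangle\phi_H^t(x)\,dt$ whenever $h$ is a Dini subdifferential of the \emph{finite-time} $a_k$ at $c$. One then passes $k\to\infty$ using Vichery's uniform convergence $a_k\to\sigma_{H:L}$ and Jourani's result on approximate subdifferentials, and the empirical measures along the orbits converge weakly-$\ast$ to an invariant measure by Krylloff--Bogoliouboff; the inclusion $\partial_A\sigma_{H:L}(0)\subset K=\rho(\mathcal{M}(\phi_H))$ is then established one direction $c'$ at a time via the support function of $K$. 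Your proposal instead treats $H\mapsto\sigma_{H:L}(c)$ as a Lipschitz functional and extracts $\mu$ from its Clarke subdifferential with no pseudo-holomorphic input at all, in the spirit of Viterbo and Vichery. Your reduction to differentiability points of $c\mapsto\sigma_{H:L}(c)$ via Rademacher and convexity of $R=\rho(\mathcal{M}(\phi_H))$ is fine and plays the same structural role as the paper's passage through $\partial_A$.

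However, there are concrete gaps in your argument which I do not think the ``continuity estimates'' you flag at the end can close. First, $\phi_H$-invariance of your $\mu$ is not established. Conjugation-invariance of $l_L$ gives $\sigma_{H\circ\varphi:L}(c)=\sigma_{H:L}(c)$ for $\varphi\in\Ham$, and linearizing gives, for each $G$, that the generalized directional derivatives of $H\mapsto\sigma_{H:L}(c)$ in the directions $\pm\{G,H\}$ vanish to first order along that one curve; but this only forces \emph{some} element of $\partial_H\sigma_{H:L}(c)$ to annihilate $\{G,H\}$ for that particular $G$, not that the element you picked (``adapted to $c'$'' via a Hahn--Banach selection) annihilates $\{G,H\}$ for \emph{all} $G$. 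Without additional input (e.g.\ a minimax argument, or $C^1$-regularity in $H$) the chosen $\mu$ need not be invariant. Second, the closing chain $\langle c',h\rangle=\int\alpha(X_H)\,d\mu=\langle c',\rho(\mu)\rangle$ only yields $\rho(\mu)=h$ if the same $\mu$ works for all $c'$; but by your own account $\mu$ is chosen depending on $c'$, and the Clarke subdifferential only produces directional inequalities $\langle c',h\rangle\leq\sigma^\circ(\cdot)$, not a $c'$-independent equality, unless $H\mapsto\sigma_{H:L}(c)$ is in fact differentiable at $H$ — which you do not show. Third, identifying the $c$-derivative with an $H$-directional derivative at $H$ requires the transformation law $\sigma_{H:L}(c+sc')=\sigma_{H\circ\psi_1\circ(\psi_\alpha^s)_1:L}(0)$ followed by a chain rule, and chain rules for Clarke subdifferentials give only outer inclusions, again producing inequalities. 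The paper's Floer-theoretic construction sidesteps all three issues: the measures come pre-packaged as Krylloff--Bogoliouboff limits of empirical measures along genuine $\phi_H$-orbits, so invariance is automatic, and the paper still ends up needing the support-function/convexity argument over all $c'$ — a device your proposal omits but would also need.
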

For a more detailed discussion of how Theorem \ref{thm1} relates to classical Aubry-Mather theory we refer to Section \ref{Mathersec}.

\begin{rem}
After having proved the main part of Theorem \ref{thm1} we learned about Vichery's \cite{Vichery14}, where a result similar to Theorem \ref{thm1} is proved in the case where $L$ is the zero-section of a cotangent bundle as well as Viterbo's \cite{Viterbo10} where results from \cite{Viterbo08} are applied to yield a result similar to Theorem \ref{thm1} for $T^*\mathbb{T}^n$. The main difference between our result and Vichery's is greater generality. Moreover, our approach detects the measures directly using pseudo-holomorphic curve techniques and avoids the use of generating functions.
\end{rem}

\begin{rem}
\label{rem101}
Fix $c=0$ and consider a measure $\mu \in \mathcal{M}(\phi_H)$ with $\rho(\mu)\in \partial \sigma_{H:L}(0)$ whose existence is guaranteed by Theorem \ref{thm1}. As the proof of Theorem \ref{thm1} will show, $\mu$ arises as a convex combination of weak$^*$-limits of sequences $(\mu_{\varsigma})_{\varsigma \in \N}\subset \mathcal{M}$ characterized by 
\begin{equation}
\label{equa12}
\int f\ d\mu_{\varsigma}=\frac{1}{k_{\varsigma}}\int_0^{k_{\varsigma}}f\phi_H^t(x_{\varsigma})\ dt \quad \forall \ f\in C^0_{bd}(M).
\end{equation}
Here, $(x_{\varsigma})_{\varsigma \in \N}$ is a sequence of points with $x_{\varsigma}\in \psi_{\varsigma}(L)$ for a sequence of symplectomorphisms $(\psi_{\varsigma})_{\varsigma \in \N}\subset \Symp(M,\omega)$ satisfying
\[
\psi_{\varsigma}\stackrel{\varsigma \to \infty}{\longrightarrow}\id
\]
in the Whitney $C^{\infty}$-topology, and $(k_{\varsigma})_{\varsigma \in \N}\subset (0,\infty)$ is a sequence of times with $k_{\varsigma}\uparrow \infty$. In addition, $\phi_H^{k_{\varsigma}}(x_{\varsigma})\in \psi_{\varsigma}(L)$ for all $\varsigma \in \N$ and to each orbit $\gamma_{\varsigma}=\{\phi_H^t(x_{\varsigma})\}_{t\in [0,k_{\varsigma}]}$ is associated an action $\mathcal{A}_{H:\psi_{\varsigma}(L)}(\gamma_{\varsigma})\in \R$ such that\footnote{See Section \ref{seqprem} for a precise definition of $\mathcal{A}_{H:\psi_{\varsigma}(L)}(\gamma_{\varsigma})$.} 
\[
\frac{\mathcal{A}_{H:\psi_{\varsigma}(L)}(\gamma_{\varsigma})}{k_{\varsigma}} \stackrel{\varsigma \to \infty}{\longrightarrow}\sigma_{H:L}(0).
\]

We will denote by $\mathfrak{M}_{H:L}\subset \mathcal{M}(\phi_H)$ the set of measures $\mu \in \mathcal{M}(\phi_H)$ with $\rho(\mu)\in \partial \sigma_{H:L}(0)$ arising as convex combinations of weak$^*$-limits of sequences $(\mu_{\varsigma})_{\varsigma}$ given by (\ref{equa12}), such that $(x_{\varsigma})_{\varsigma}$ and $(k_{\varsigma})_{\varsigma}$ satisfy the criteria mentioned above. This notation is justified by results, first due to Viterbo \cite[Proposition 13.3]{Viterbo08} and later Monzner-Vichery-Zapolsky \cite[Theorem 1.11]{ZapolskyMonznerVichery12}, saying that $\sigma_{H:L}$ coincides with Mather's $\alpha$-function when $L\subset T^*L$ is the zero-section and $H$ is Tonelli. In particular, $\mathfrak{M}_{H:L}$-measures are Mather measures in this setting (see Remark \ref{Matrem901} below). For a discussion about the extent to which $\mathfrak{M}_{H:L}$ coincides with the set of all Mather measures associated to $0\in H^1(M;\R)$ we refer to Remark \ref{Matrem902} below.
\end{rem}

Under certain circumstances, one can reduce the domain of $\sigma_{H:L}$ further, thus getting further restrictions on its subdifferentials. In the following we will denote by $r_L:H^1(M;\R) \to H^1(L;\R)$ and by $i_L:H_1(L;\R) \to H_1(M;\R)$ the (co)homological maps induced by the inclusion $L\hookrightarrow M$.

\begin{cor}
\label{lem1}
Suppose $r_L$ is surjective. Then $\sigma_{H:L}$ descends to a locally Lipschitz function
\[
\alpha_{H:L}: H^1(L;\R) \to \R .
\]
In particular, for every $c\in H^1(L;\R)$ and every subdifferential $h\in \partial \alpha_{H:L}(c)\subset H_1(L;\R)$ there exists a $\mu \in \mathcal{M}(\phi_H)$ with rotation vector
\[
\rho(\mu)=i_L(h)\in H_1(M;\R).
\]
\end{cor}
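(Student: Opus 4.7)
The strategy is to show that $\sigma_{H:L}$ is constant on the fibers of $r_L$, and then to translate Theorem \ref{thm1} across this descent by dualizing.

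\emph{Descent.} I would first verify $\sigma_{H:L}(c+\delta) = \sigma_{H:L}(c)$ for every $c\in H^1(M;\R)$ and every $\delta\in \ker(r_L)$. The key step is to realize $\delta$ as $\Flux(\psi^\delta)$ for some symplectic isotopy $\psi^\delta$ whose time-one map $\psi^\delta_1$ preserves $L$ up to Hamiltonian isotopy. This should be possible precisely because the induced Lagrangian flux of $\psi^\delta$ along $L$ equals $r_L(\delta) = 0$, so a Banyaga-type statement for Lagrangian flux allows one to modify $\psi^\delta$ within its flux class so that $\psi^\delta_1(L)$ is Hamiltonian isotopic to $L$. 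Given this, the naturality of the Leclercq-Zapolsky invariants under symplectic conjugation (giving $l_L(\varphi^{-1}\tilde{\phi}\varphi)=l_{\varphi_1(L)}(\tilde{\phi})$), combined with the invariance of $l_L$ under Hamiltonian deformations of the Lagrangian up to a constant shift, forces the difference $l_L(\psi_1^{-1}\tilde{\phi}_H^k\psi_1)-l_L(\tilde{\phi}_H^k)$ to be bounded uniformly in $k$. Dividing by $k$ and passing to the limit yields $\sigma_{H:L}(\delta)=\sigma_{H:L}(0)$; concatenating $\psi^\delta$ with an isotopy of flux $c$ gives the general statement, so $\sigma_{H:L}$ descends to a well-defined function $\alpha_{H:L}$ on $H^1(L;\R)$.

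\emph{Lipschitz property and subdifferential.} Since $r_L$ is a surjective linear map between finite-dimensional spaces, it admits a linear section $s:H^1(L;\R)\to H^1(M;\R)$, and $\alpha_{H:L}=\sigma_{H:L}\circ s$, so local Lipschitzness of $\alpha_{H:L}$ follows from that of $\sigma_{H:L}$ (which is part of Theorem \ref{prop1}). For the subdifferential statement, the Clarke chain rule applied to the surjective continuous linear map $r_L$ gives the equality
\[
\partial \sigma_{H:L}(\tilde{c}) = r_L^*\bigl(\partial \alpha_{H:L}(r_L(\tilde{c}))\bigr)
\]
at each $\tilde c\in H^1(M;\R)$. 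Under the Kronecker pairings identifying $H^1(M;\R)^*\cong H_1(M;\R)$ and $H^1(L;\R)^*\cong H_1(L;\R)$, naturality of the pairing with respect to the inclusion $L\hookrightarrow M$ identifies the adjoint $r_L^*$ with $i_L$. Consequently, for any $h\in \partial \alpha_{H:L}(c)$ and any $\tilde c\in r_L^{-1}(c)$, one has $i_L(h)\in \partial \sigma_{H:L}(\tilde c)$, and Theorem \ref{thm1} then delivers a $\mu \in \mathcal{M}(\phi_H)$ with $\rho(\mu) = i_L(h)$.

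\emph{Main obstacle.} The technical core is the descent step. It rests on two ingredients: a Lagrangian-flux analogue of Banyaga's theorem that realizes any $\delta\in\ker(r_L)$ by a symplectic isotopy fixing $L$ up to Hamiltonian isotopy, and the precise conjugation and Hamiltonian-shift behavior of Leclercq-Zapolsky invariants. Once descent is in hand, the remaining steps are essentially formal, reducing to a linear section for $r_L$ and the Clarke chain rule.
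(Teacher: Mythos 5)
Your proposal is correct and follows essentially the same route as the paper: the descent uses the vanishing of the Lagrangian flux of the isotopy restricted to $L$ (hence $\psi_1(L)$ Hamiltonian isotopic to $L$) together with the conjugation/Hamiltonian invariance of $l_L$, which is exactly what the paper packages as properties c) and d) of Theorem~\ref{prop1}. The only cosmetic difference is in the subdifferential step, where you cite the exact Clarke chain rule for surjective linear maps while the paper verifies the two inclusions $\partial\sigma_{H:L}(c)=r_L^*\bigl(\partial\alpha_{H:L}(r_L(c))\bigr)$ directly from the definition of the generalized directional derivative; these are the same argument.
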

Of course, the notation is meant to suggest that $\alpha_{H:L}$ should play the role of Mather's $\alpha$-function on a general symplectic manifold. 

Polterovich \cite{Polterovich14} used Poisson bracket invariants to study invariant measures which have "large" rotation vectors. In certain situations Corollary \ref{lem1} allows us to sharpen his result in the sense that we can detect that the rotation vector of the constructed measure is contained in the subspace $i_L(H_1(L;\R)) \leq H_1(M;\R)$. 

A smooth map $\psi:[0,1]\times L\to M$ is said to be a \emph{Lagrange isotopy} if each $\psi_t:L\to (M,\omega)$ is a Lagrange embedding. We use the terminology that $\psi$ \emph{starts at $L$} if $\psi_0:L\to M$ is the inclusion $L\hookrightarrow M$.
To a Lagrange isotopy $\psi$ starting at $L$ is associated a class $\Flux_L(\psi)\in H^1(L;\R)$ as follows: A smooth loop $\gamma:S^1\to L$ gives rise to a cylinder $\Gamma:S^1 \times [0,1]\to M$ via $\Gamma(s,t)=\psi_t(\gamma(s))$ and $\Flux_L(\psi)$ is characterised by
\begin{equation}
\label{Meq4}
\langle \Flux_L(\psi),[\gamma]\rangle=\int_{S^1 \times [0,1]} \Gamma^*\omega.
\end{equation}
For further details on Lagrange Flux ($\Flux_L$) we refer to \cite[Section 6]{Solomon13}.
\begin{cor}
\label{cor2}
Suppose $r_L$ is surjective. For every $c\in H^1(L;\R)$ there exists an $h\in H_1(L;\R)$ such that $i_L(h)\in H_1(M;\R)$ is realized as the rotation vector of a $\mathcal{M}(\phi_H)$-measure. Moreover,  
\[
\langle c,h\rangle \geq \sup_{\psi}\left( \min_{\psi_1(L)}(H)-\max_{L}(H) \right)
\]
where the supremum runs over all Lagrange isotopies $\psi:[0,1]\times L\to M$ starting at $L$ and $\Flux_L(\psi)=c$.
\end{cor}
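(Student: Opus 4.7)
The strategy is to combine Corollary \ref{lem1} with the Lebourg mean value theorem for Clarke subdifferentials of locally Lipschitz functions, together with the standard upper and lower Lagrangian spectral estimates applied to autonomous Hamiltonians.

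\emph{Constructing $h$ and the measure.} Since $\alpha_{H:L}:H^1(L;\R)\to \R$ is locally Lipschitz by Corollary \ref{lem1}, applying Lebourg's mean value theorem on the segment from $0$ to $c$ produces some $t\in (0,1)$ and some $h\in \partial \alpha_{H:L}(tc)$ with
\[
\langle c,h\rangle = \alpha_{H:L}(c)-\alpha_{H:L}(0).
\]
Corollary \ref{lem1} applied at the point $tc\in H^1(L;\R)$ then delivers an invariant measure $\mu \in \mathcal{M}(\phi_H)$ with $\rho(\mu)=i_L(h)$. It therefore suffices to prove
\[
\alpha_{H:L}(c)-\alpha_{H:L}(0)\geq \min_{\psi_1(L)}H - \max_L H
\]
for every Lagrange isotopy $\psi$ starting at $L$ with $\Flux_L(\psi)=c$, since taking the supremum over $\psi$ yields the stated bound.

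\emph{The two spectral estimates.} For the upper bound $\alpha_{H:L}(0)\leq \max_L H$, I plug the trivial isotopy $\psi\equiv \id$ (which has flux $0$) into Definition \ref{def2}: since $\tilde{\phi}_H^k$ is generated by the autonomous Hamiltonian $kH$, the standard upper Lagrangian spectral estimate yields $l_L(\tilde{\phi}_H^k)\leq k\max_L H$, and dividing by $k$ and passing to the limit gives the bound. For the lower bound $\alpha_{H:L}(c)\geq \min_{\psi_1(L)}H$, I would use the surjectivity of $r_L$ to extend the closed $1$-forms on $L$ that generate $\psi$ in a Weinstein neighborhood to a time-dependent family of closed $1$-forms on $M$, yielding a symplectic isotopy $\Psi\in \Symp(M,\omega)$ such that $\Psi_1(L)=\psi_1(L)$ and $r_L(\Flux(\Psi))=c$. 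Naturality of $l_L$ under symplectic conjugation, together with the lower spectral estimate applied to $\Psi_1(L)$, then gives
\[
l_L(\Psi_1^{-1}\tilde{\phi}_H^k\Psi_1)=l_{\Psi_1(L)}(\tilde{\phi}_H^k)\geq k\min_{\psi_1(L)}H,
\]
and homogenization combined with the descent $\sigma_{H:L}(\Flux(\Psi))=\alpha_{H:L}(c)$ from Corollary \ref{lem1} produces the desired lower bound.

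\emph{Main obstacle.} The hard part is the extension step: constructing a global symplectic isotopy $\Psi$ of $M$ realizing the same endpoint $\psi_1(L)$ as the given (possibly non-Hamiltonian) Lagrange isotopy $\psi$, with controlled flux. This requires a careful time-dependent extension of the generating closed $1$-forms via surjectivity of $r_L$, combined with a Weinstein neighborhood and cut-off construction to globalize. A secondary, less serious, issue is to verify that the conjugation naturality of $l_L$ holds for symplectic $\Psi$ (not merely Hamiltonian), which follows from the Leclercq--Zapolsky formalism recalled in Section \ref{seqprem}.
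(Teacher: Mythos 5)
Your proof is correct and takes essentially the same approach as the paper: Lebourg's mean value theorem combined with Corollary \ref{lem1} produces $h$ and the invariant measure, and the bound then reduces to the two spectral estimates for $\alpha_{H:L}$. The "main obstacle" you identify is resolved in the paper by citing \cite[Lemma 6.6]{Solomon13} directly, which produces the global symplectic isotopy $\tilde{\psi}$ with $\tilde{\psi}_t(L)=\psi_t(L)$ and $r_L(\Flux(\tilde{\psi}))=c$ (this extension result does not itself need surjectivity of $r_L$); the required two-sided spectral bound is then exactly Theorem \ref{prop1}(b), rather than a separate naturality argument for $l_L$.
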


\subsection{The non-compact setting}
\label{noncompact}
Here we will discuss the analogue of the above results in the setting of a non-compact $(M,\omega)$. These can be stated for very general $(M,\omega)$, but the best results are available for Liouville manifolds. This setting in particular generalizes the case of cotangent bundles covered by Vichery \cite{Vichery14}.

Following \cite{EntovPolterovich17} we will use the terminology that an \emph{exact} symplectic manifold $(M,\omega)$ is \emph{Liouville} if the following two conditions are met:
\begin{enumerate}[(a)]
\item
$M$ is equipped with a distinguished 1-form $\lambda$ such that $d\lambda=\omega$ and the Liouville vector field $Z\in \mathfrak{X}(M)$ defined by $i_Z \omega=\lambda$ is complete.
\item 
\label{Mcondb}
There exists a closed connected hypersurface $\Sigma \subset M$, which is transverse to $Z$ and bounds a domain $U\subset M$ whose closure is compact, such that 
\[
M=U \sqcup \left( \bigcup_{t\geq 0}\theta_t(\Sigma) \right),
\]
where $(\theta_t)_{t\in \R}$ denotes the Liouville flow generated by $Z$.
\end{enumerate}
The last condition implies that $(\Sigma,\lambda_0:=\lambda|_{\Sigma})$ is a contact manifold, called the \emph{ideal contact boundary  of $M$}. For a Liouville $(M,\omega=d\lambda)$, there is a diffeomorphism $\cup_{t\in \R}\theta_t(\Sigma)\cong (0,\infty)\times \Sigma$ which identifies $\lambda$ with $s\lambda_0$ on $(0,\infty)\times \Sigma$ (here $s$ denotes the coordinate on $(0,\infty)$). In the following we will tacitly use the identification $\cup_{t\in \R}\theta_t(\Sigma)\cong (0,\infty)\times \Sigma$ and think of $(0,\infty)\times \Sigma$ as a subset of $M$ with the property that $\lambda|_{(0,\infty)\times \Sigma}=s\lambda_0$.

Let now $(M,\omega=d\lambda)$ be a Liouville manifold and denote by $L\subset (M,\omega)$ a closed $\lambda$-exact Lagrangian submanifold.\footnote{Recall that a Lagrangian $L\subset (M,d\lambda)$ is said to be \emph{$\lambda$-exact} if $\lambda|_L=df$ for some $f\in C^{\infty}(L)$.} We will denote by $\mathcal{H}=\mathcal{H}(M,\omega)$ the class of autonomous Hamiltonians $H\in C^{\infty}(M)$ whose Hamiltonian flow $\phi_H=\{\phi_H^t\}_{t\in \R}$ is complete and satisfies the condition that $\mathcal{O}_+(X)\subset M$ is compact for every compact $X\subset M$. Here 
\[
\mathcal{O}_+(X):=\overline{\bigcup_{t\geq 0}\phi_H^t(X)}.
\]
Note that $\mathcal{H}$ contains all proper Hamiltonians. 
Given $H\in \mathcal{H}$, every measure $\mu \in \mathcal{M}$ has a well-defined action $\mathcal{A}_{H,\lambda}(\mu)\in \R$ given by\footnote{Recall that all $\mathcal{M}$-measures are required to have \emph{compact} support.}
\begin{equation}
\label{eq8}
\mathcal{A}_{H,\lambda}(\mu):=\int H- \langle \lambda, X_H \rangle \ d\mu.
\end{equation}
In order to define  
\[
\sigma_{H:L}:H^1(M;\R) \to \R
\]
for $H\in \mathcal{H}$ in this setting we make use of 
\begin{prop}
\label{Mprop1}
If $(M,\omega=d\lambda)$ is Liouville, then every class $c\in H^1_{dR}(M;\R)$ admits a representative $\eta$ such that 
\begin{equation}
\label{Meq1}
\eta|_{\{s\}\times \Sigma}=\eta|_{\{1\}\times \Sigma}\quad \forall \ s\in [1,\infty).
\end{equation}
Here the equality means the two 1-forms on $\Sigma$ are the same. In particular, the vector field $X\in \mathfrak{X}(M)$ characterized by $i_X\omega=\eta$ is complete. 
\end{prop}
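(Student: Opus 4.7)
The plan is to begin with an arbitrary smooth closed representative $\eta_0$ of $c$ and modify it by an exact form so that, on the Liouville end, it becomes the pullback of a fixed $1$-form on $\Sigma$. Completeness of the induced vector field $X$ will then follow from an explicit computation in the symplectization coordinates.

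For the modification, set $W:=(0,\infty)\times\Sigma\subset M$, denote by $\pi:W\to\Sigma$ the projection, and let $\beta:=\eta_0|_{\{1\}\times\Sigma}$ and $\nu:=\pi^*\beta$. By construction $\nu$ is smooth and closed on $W$, with $\nu|_{\{s\}\times\Sigma}=\beta$ for every $s>0$. Since $\{1\}\times\Sigma\hookrightarrow W$ is a homotopy equivalence, $[\nu]=[\eta_0|_W]$ in $H^1(W;\R)$, so there exists $f\in C^\infty(W)$ with $\eta_0|_W-\nu=df$. Now pick a smooth cutoff $\chi:M\to[0,1]$ with $\chi\equiv 1$ on $[1,\infty)\times\Sigma$ and $\supp(\chi)\subset[1/2,\infty)\times\Sigma$; since the latter is closed in $M$ and contained in $W$, the product $\chi f$ extends by zero to a smooth function $F\in C^\infty(M)$. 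Setting $\eta:=\eta_0-dF$ produces a closed $1$-form representing $c$, and on $[1,\infty)\times\Sigma$ the identity $F=f$ yields $\eta=\eta_0-df=\nu$; since $\nu=\pi^*\beta$ is $s$-independent, this establishes (\ref{Meq1}).

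For completeness of $X$, work on the end, where $\omega=ds\wedge\lambda_0+s\,d\lambda_0$ and $\eta=\nu$ has no $ds$-component. Writing $X=g\partial_s+Y$ with $Y$ tangent to $\Sigma$, the equation $i_X\omega=\nu$ becomes $\lambda_0(Y)=0$ together with $g\lambda_0+s\,i_Y d\lambda_0=\beta$. Evaluating on the Reeb vector field $R$ of $\lambda_0$ gives $g(\sigma)=\beta(R)$, and the non-degeneracy of $d\lambda_0|_{\ker\lambda_0}$ lets one solve $Y=s^{-1}\tilde Y$ for some $s$-independent smooth vector field $\tilde Y$ on $\Sigma$. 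Compactness of $\Sigma$ makes $g$ and $\tilde Y$ uniformly bounded, so along any integral curve in the end $|\dot s|\le\|g\|_\infty$; the curve therefore remains in the compact subset $\{|s-s(0)|\le\|g\|_\infty T\}\times\Sigma$ over any bounded interval $[-T,T]$, unless it first crosses into the precompact region $\overline{U}$. Combined with the smoothness of $X$ on the precompact part $M\setminus[2,\infty)\times\Sigma$, this yields completeness.

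The main technical point I expect is ensuring that $F=\chi f$ is smooth on all of $M$, which requires $\chi$ to vanish on an open neighborhood of $M\setminus W$ in $M$. This holds for the choice above because $M\setminus[1/2,\infty)\times\Sigma$ is such a neighborhood on which $\chi\equiv 0$; the closedness of $[1/2,\infty)\times\Sigma$ in $M$ in turn follows from the Liouville decomposition in condition (\ref{Mcondb}), which identifies $[1,\infty)\times\Sigma$ with the closed set $M\setminus U$, together with the completeness of the Liouville flow $\theta$.
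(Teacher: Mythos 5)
Your proof is correct and follows essentially the same route as the paper's: start with an arbitrary representative $\eta_0$, pull back $\eta_0|_{\{1\}\times\Sigma}$ via the projection of the symplectization end to obtain a cohomologous closed form, use a cutoff to patch in the exact correction, and then argue completeness from the boundedness of $ds(X)=\eta(R)$ (via compactness of $\Sigma$). The only cosmetic differences are the sign convention for $f$, the choice of transition region for $\chi$ (the paper uses a cutoff $\chi\in C^\infty((0,\infty))$ supported in $[1/4,\infty)$ and equal to $1$ on $[3/4,\infty)$), and that you give a somewhat more detailed decomposition $X=g\partial_s+Y$ with $Y=s^{-1}\tilde Y$, whereas the paper only records the $ds$-component; both ultimately use the same observation that $ds(X)$ is $s$-independent and bounded. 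Your remark about why $\chi f$ extends smoothly by zero is precisely the point the paper handles by choosing $\chi$ to depend only on $s$ and vanish near $s=0$, so the two arguments are the same in substance.
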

The proof is more or less immediate, but is carried out in Section \ref{seqprem}. Given a class $c\in H^1_{dR}(M;\R)=H^1(M;\R)$ we define $\sigma_{H:L}(c)$ by the formula (\ref{eq2}), where $(\psi_t)_{t\in \R}$ is the flow generated by a vector field $X$ satisfying $i_X\omega=\eta$ for a closed 1-form $\eta \in c$ meeting condition (\ref{Meq1}). In Section \ref{seqprem} below we argue that the value of $\sigma_{H:L}(c)$ depends neither on the choice of $\eta$ in the class $c$ satisfying (\ref{Meq1}), nor on the choice of ideal contact boundary $\Sigma$ meeting condition (b) on page \pageref{Mcondb}. The non-compact version of Theorem 3 now reads as follows:
\begin{thm}
\label{thm2}
Let $H\in \mathcal{H}(M,d\lambda)$. Then, for every $c\in H^1(M;\R)$ and every Clarke subdifferential $h\in \partial \sigma_{H:L}(c)\subset H_1(M;\R)$, there exists a measure $\mu \in \mathcal{M}(\phi_H)$ whose action and rotation vector satisfy
\[
\mathcal{A}_{H,\lambda}(\mu)=\sigma_{H:L}(c)-\langle c,\rho(\mu) \rangle  \quad \text{and} \quad \rho(\mu)=h.
\]
\end{thm}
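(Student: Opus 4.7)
The plan is to adapt the proof of Theorem \ref{thm1} to the Liouville setting, exploiting the compactness of $\mathcal{O}_+(\psi_1(L))$ guaranteed by $H\in \mathcal{H}$. By convexity of $\partial \sigma_{H:L}(c)$ and linearity of both $\mathcal{A}_{H,\lambda}$ and $\rho$ in $\mu$, Carath\'eodory's theorem reduces us to the case in which $h=\lim_{n\to\infty}d\sigma_{H:L}(c_n)$ for some sequence $c_n\to c$ of differentiability points of $\sigma_{H:L}$; a convex combination of the measures produced in these limiting cases realizes the general subdifferential.

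For each $n$, choose via Proposition \ref{Mprop1} a 1-form $\eta_n\in c_n$ satisfying (\ref{Meq1}) and let $\psi^{(n)}=\{\psi^{(n)}_t\}$ denote the complete symplectic isotopy it generates. Applying the spectrality of the Leclercq-Zapolsky Lagrangian spectral invariant (extended to the $\lambda$-exact Liouville setting) to the Hamiltonian element $\psi_1^{(n)-1}\tilde{\phi}_H^k\psi_1^{(n)}\in \widetilde{\Ham}(M,\omega)$ produces, for every $k\in \N$, a Hamiltonian chord realizing $l_L(\psi_1^{(n)-1}\tilde{\phi}_H^k\psi_1^{(n)})$. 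Un-conjugating yields an orbit $\gamma_{n,k}:[0,k]\to M$ of $\phi_H$ with endpoints in $\psi_1^{(n)}(L)$ whose twisted action satisfies
\[
\int_0^k \bigl(H-\langle \lambda-\eta_n, X_H\rangle\bigr)\circ \gamma_{n,k}\,dt \ +\ \Delta_{n,k}\ =\ l_L(\psi_1^{(n)-1}\tilde{\phi}_H^k\psi_1^{(n)}),
\]
where $\Delta_{n,k}$ is bounded uniformly in $k$ by primitives of $\lambda$ on the compact Lagrangian $\psi_1^{(n)}(L)$. Dividing by $k$ and invoking (\ref{eq2}) shows that the averaged twisted action tends to $\sigma_{H:L}(c_n)$.

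Since $\psi_1^{(n)}(L)$ is compact and $H\in \mathcal{H}$, the set $\mathcal{O}_+(\psi_1^{(n)}(L))$ is compact and contains all $\gamma_{n,k}$. Form the averaged probability measures
\[
\mu_{n,k}:=\frac{1}{k}\int_0^k (\phi_H^t)_*\delta_{\gamma_{n,k}(0)}\,dt\in \mathcal{M}(\mathcal{O}_+(\psi_1^{(n)}(L)))
\]
and extract by Banach-Alaoglu a diagonal subsequence $\mu_{n,k_n}\stackrel{\ast}{\rightharpoonup}\mu$; a standard estimate $|\int f\,d((\phi_H^s)_*\mu_{n,k}-\mu_{n,k})|\leq 2s\|f\|_\infty/k$ shows $\mu\in\mathcal{M}(\phi_H)$. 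Pairing with $\eta_n$ gives $\int \langle \eta_n, X_H\rangle\,d\mu_{n,k}=\langle c_n, \rho(\mu_{n,k})\rangle$, and the standard fact that at a differentiability point the gradient of a homogenized spectral invariant equals the asymptotic cycle of the extremal chord forces $\rho(\mu_{n,k_n})\to d\sigma_{H:L}(c_n)$, hence $\rho(\mu)=h$. Passing to the limit in the averaged twisted action identity, using boundedness of $\lambda$, $\eta_n$ and $H$ on $\mathcal{O}_+(\psi_1^{(n)}(L))$ and $\Delta_{n,k}/k\to 0$, produces $\mathcal{A}_{H,\lambda}(\mu)+\langle c,\rho(\mu)\rangle=\sigma_{H:L}(c)$, which is the claimed identity.

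The main obstacle is the construction of the chords $\gamma_{n,k}$ with the correct action interpretation: establishing that $l_L$ is realized as an action value of an actual orbit in the Liouville setting, with uniform-in-$k$ control of the boundary correction $\Delta_{n,k}$, and verifying that the asymptotic cycle of the extremal chord reproduces $d\sigma_{H:L}(c_n)$. Both require a careful transfer of the PSS/min-max machinery from the closed monotone setting to the $\lambda$-exact Liouville setting, as well as the independence-of-representative statements for $\sigma_{H:L}$ announced after Proposition \ref{Mprop1}.
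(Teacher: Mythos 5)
Your outline matches the paper's strategy at a high level (homogenize $l_L$, extract long orbit chords with endpoints on isotoped copies of $L$, average their time-averaged measures, pass to a weak$^*$-limit, then identify both the rotation vector and the twisted action), but the crucial step is missing, and the reductions you use are not the ones that make the double limit work.

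The step you flag and then elide is exactly the content of Proposition~\ref{prop2} (and its extension Proposition~\ref{prop3}), and it is \emph{not} a ``standard fact'' that the gradient of the homogenized invariant equals the asymptotic cycle of an extremal chord. Spectrality of $l_L$ only produces \emph{some} chord whose action equals the spectral value; nothing a priori relates its asymptotic cycle to $d\sigma_{H:L}(c_n)$. The paper obtains the directional estimate by a Floer continuation argument between $\psi_0$ and $\psi_\tau$: one takes Usher's min-max representing cycle for $\PSS([L])$ below the action value, pushes it through the continuation map $\Phi$, uses the energy identity (\ref{eq30}) to bound the curvature term, and sends $\tau\to 0$. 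Because the total energy (\ref{eq15}) tends to zero, Hofer's compactness argument rules out bubbling and forces the continuation strips to $C^\infty_{loc}$-converge to an $s$-independent chord, from which the directional-derivative inequality (\ref{eq12}) and the action identity (\ref{eq5})/(\ref{Mateq901}) follow by dominated convergence. Nothing in your proposal replaces this argument; without it you do not know that the chord you call $\gamma_{n,k}$ carries the correct asymptotic cycle.

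Two secondary issues. First, your Carath\'eodory reduction via (\ref{eq99}) to differentiability points is legitimate in principle, but it does not mesh well with the fact that the chords come from the \emph{finite-$k$} functions $a_k$, not from $\sigma_{H:L}$ itself. You need a stability statement relating subdifferentials of $a_k$ to those of $\sigma_{H:L}=\lim a_k$, and the paper's route --- Ioffe's approximate subdifferential $\partial_A$ together with Jourani's stability theorem (\ref{eq333}) under uniform convergence, then $\partial f = \overline{\conv(\partial_A f)}$ --- is precisely what organizes the diagonal passage in $(n,k)$ that your proposal asserts informally. Second, the paper sidesteps the need to hit the rotation vector $h$ exactly by a support-function argument: it shows $\langle c',h\rangle\leq\chi_K(c')$ for all $c'$ where $K$ is the compact convex set of rotation vectors of invariant measures in a fixed compact invariant set satisfying the action constraint, and concludes $h\in K$. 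Your approach would instead have to produce, for each extreme point $h_j$, a measure with rotation vector \emph{exactly} $h_j$; each application of the Floer estimate only gives a one-sided inequality in a single direction $c'$, so one still needs to aggregate over all $c'$ --- effectively reinventing the support-function step. None of this is fatal, but it is work that the proposal currently attributes to ``a standard fact,'' and that fact is the theorem's actual technical content.
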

The following is the non-compact version of Corollary \ref{lem1}.
\begin{cor}
\label{cor3}
Let $H\in \mathcal{H}(M,d\lambda)$ and suppose $r_{L}$ is surjective. Then $\sigma_{H:L}$ descends to a locally Lipschitz function
\[
\alpha_{H:L}:H^1(L;\R)\to \R.
\]
In particular, for every $c\in H^1(L;\R)$ and every Clarke subdifferential $h\in \partial \alpha_{H:L}(c)$, there exists a measure $\mu \in \mathcal{M}(\phi_H)$ satisfying 
\begin{equation}
\label{Mateq902}
\mathcal{A}_{H,\lambda}(\mu)=\alpha_{H:L}(c)-\langle c,\rho(\mu) \rangle  \quad \text{and} \quad \rho(\mu)=i_L(h)\in H_1(M;\R).
\end{equation}
\end{cor}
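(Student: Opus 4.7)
The plan is to reduce Corollary \ref{cor3} to Theorem \ref{thm2} by factoring $\sigma_{H:L}$ through the surjection $r_L$ and then transferring Clarke subdifferentials along this factorization.

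The first step is to show that $\sigma_{H:L}(c)$ depends only on $r_L(c)$. By linearity it suffices to show that $\sigma_{H:L}$ is constant on $\ker r_L$. Given $c\in\ker r_L$, Proposition \ref{Mprop1} provides a closed 1-form $\eta\in c$ satisfying (\ref{Meq1}), generating a complete symplectic isotopy $\psi$. Since $r_L(c)=[\eta|_L]=0$, the form $\eta|_L$ is exact on $L$, and (\ref{Meq4}) gives $\Flux_L(\psi|_{[0,1]\times L})=0$. By the standard fact that a Lagrange isotopy of a closed Lagrangian with vanishing Lagrange flux extends to a Hamiltonian isotopy of the ambient manifold (cf.\ \cite[Section 6]{Solomon13}), there exists $G\in C^\infty_c([0,1]\times M)$ with $\phi_G^1(L)=\psi_1(L)$. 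Using the naturality of Leclercq-Zapolsky invariants under symplectic conjugation, $l_L(\psi_1^{-1}\tilde{\phi}_H^k\psi_1)=l_{\psi_1(L)}(\tilde{\phi}_H^k)=l_{\phi_G^1(L)}(\tilde{\phi}_H^k)=l_L((\tilde{\phi}_G^1)^{-1}\tilde{\phi}_H^k\tilde{\phi}_G^1)$, and the Hofer Lipschitz property combined with bi-invariance of the Hofer metric gives $|l_L((\tilde{\phi}_G^1)^{-1}\tilde{\phi}_H^k\tilde{\phi}_G^1)-l_L(\tilde{\phi}_H^k)|\leq 2\,\|G\|_{\mathrm{Hof}}$, independent of $k$. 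Dividing by $k$ and letting $k\to\infty$ yields $\sigma_{H:L}(c)=\sigma_{H:L}(0)$, so $\sigma_{H:L}=\alpha_{H:L}\circ r_L$ for a well-defined $\alpha_{H:L}:H^1(L;\R)\to\R$.

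The local Lipschitz property of $\alpha_{H:L}$ then follows from that of $\sigma_{H:L}$ (part of Theorem \ref{thm2}) together with the open mapping property of the linear surjection $r_L$. For the subdifferential statement, note that $i_L$ is the adjoint of $r_L$ with respect to the canonical pairing $H^1\times H_1\to\R$, so the chain rule for Clarke subdifferentials of a locally Lipschitz function composed with a linear surjection gives $\partial\sigma_{H:L}(\tilde c)=i_L(\partial\alpha_{H:L}(r_L(\tilde c)))$ for every $\tilde c\in H^1(M;\R)$. In particular, given $c\in H^1(L;\R)$ and $h\in\partial\alpha_{H:L}(c)$, pick any lift $\tilde c\in r_L^{-1}(c)$ and conclude $i_L(h)\in\partial\sigma_{H:L}(\tilde c)$. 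Applying Theorem \ref{thm2} to $(\tilde c, i_L(h))$ produces $\mu\in\mathcal{M}(\phi_H)$ with $\rho(\mu)=i_L(h)$ and $\mathcal{A}_{H,\lambda}(\mu)=\sigma_{H:L}(\tilde c)-\langle\tilde c,\rho(\mu)\rangle=\alpha_{H:L}(c)-\langle\tilde c,i_L(h)\rangle=\alpha_{H:L}(c)-\langle c,h\rangle$, matching (\ref{Mateq902}).

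The main obstacle will be the descent step: one must check carefully that in the non-compact Liouville setting the Leclercq-Zapolsky invariant $l_L$ is well-behaved under conjugation by a symplectic isotopy $\psi$ coming from an $\eta$ satisfying (\ref{Meq1}) (so that the Floer theoretic objects defining $l_L$ remain compact and admissible), and that changing $L$ to $\phi_G^1(L)$ by a compactly supported Hamiltonian isotopy contributes only an $O(1)$ error to the asymptotic average. Once this is in place, the transfer of the Lipschitz property, the Clarke chain rule, and the concluding application of Theorem \ref{thm2} are formal consequences.
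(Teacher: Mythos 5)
Your proposal follows the same overall route as the paper's proof (which the paper states is ``identical'' to the proof of Corollary~\ref{lem1}): factor $\sigma_{H:L}$ through $r_L$ using the symplectomorphism-invariance properties of $\sigma_{H:L}$, identify the Clarke subdifferentials of $\sigma_{H:L}$ with $r_L^*$ of those of $\alpha_{H:L}$, and feed the resulting subdifferentials into Theorem~\ref{thm2}. The concluding computation $\langle \tilde c, i_L(h)\rangle = \langle c,h\rangle$ matching~(\ref{Mateq902}) is exactly right. There are two differences in presentation worth flagging. First, where the paper invokes properties~c) and~d) of Theorem~\ref{prop1} directly, you re-derive the Hamiltonian-conjugation step from scratch via Lagrangian flux, Solomon's extension lemma, and Hofer--Lipschitz continuity of $l_L$; this is correct and essentially re-proves property~d), though it is somewhat more work than citing it. Second, where the paper verifies $\partial\sigma_{H:L}(c)=r_L^*(\partial\alpha_{H:L}(r_L(c)))$ by hand from the definition of the generalized directional derivative, you cite the Clarke chain rule for a composition with a surjective linear map; this is a legitimate shortcut and the hypothesis of surjectivity (needed for equality rather than just the one-sided inclusion) is precisely $r_L$ being onto.

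One small but real imprecision: the step ``by linearity it suffices to show that $\sigma_{H:L}$ is constant on $\ker r_L$'' does not literally hold, since $\sigma_{H:L}$ is not linear. What one actually needs is that $\sigma_{H:L}(c_1)=\sigma_{H:L}(c_2)$ whenever $c_1-c_2\in\ker r_L$, and the reduction to the case $c_2=0$ is not automatic; it is exactly property~c) of Theorem~\ref{prop1} (the translation formula $\sigma_{H:L}(c+c')=\sigma_{H:\psi_1(L)}(c)$ with $\Flux(\psi)=c'$) that lets you reduce the general pair $(c_1,c_2)$ to comparing $\sigma_{H:\psi_1(L)}$ with $\sigma_{H:L}$ and then to the $c_2=0$ argument you give. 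Your mechanism for constancy on $\ker r_L$ is correct, but should be preceded by an explicit invocation of property~c) rather than ``linearity''. You also correctly identify that the non-compact setting requires checking that $l_L$ and the conjugating isotopies are admissible; the paper handles this in Section~\ref{seqprem} using that $\mathcal{O}_+(L)$ is compact so all Hamiltonians can be cut off, and that the forms $\eta$ can be chosen cylindrical at infinity so the generated isotopies are complete.
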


\begin{rem}
	\label{Matrem901}
	Suppose now that $M=T^*L$ with $L$ a closed manifold viewed as the zero-section $L\subset T^*L$ and $\omega=d\lambda$ with $\lambda=pdq$ being the canonical Liouville one-form. A Tonelli Lagrangian $l\in C^{\infty}(TN)$ (see \cite[Definition 1.1.1]{Sorrentino15}) sets up a Legendre transform 
	\[
	\mathcal{L}:TN\to T^*N,
	\]
	so that the associated Tonelli Hamiltonian $H$ and $l$ satisfy the equation
	\[
	l(\mathcal{L}^{-1}(q,p))=\langle \lambda, X_H \rangle (q,p) -H(q,p) \quad \forall \ p\in T_q^*N.
	\]
	In particular, in this setting one sees that the condition on the action in (\ref{Mateq902}) can be written
	\begin{equation}
	\label{Mateq903}
	-\alpha_{H:L}(c)=\int l\circ \mathcal{L}^{-1}\ d\mu-\langle c,\rho(\mu) \rangle.
	\end{equation}
	As mentioned in Remark \ref{rem101}, it is known that $\alpha_{H:L}$ coincides with Mather's $\alpha$-function in this setting so (by \cite[Section 3.1]{Sorrentino15}) (\ref{Mateq903}) implies that the measures detected in Corollary \ref{cor3} are Mather measures in this setting.
\end{rem}

\begin{rem}[On the definition of $\mathfrak{M}_{H:L}$]
	\label{Matrem902}
	In the Tonelli setting from the above Remark \ref{Matrem901}, a Mather measure associated to $0\in H_1(L;\R)$ is a measure $\mu \in \mathcal{M}(\phi_H)$ which satisfies (\ref{Mateq902}) with $c=0$. So why don't we define $\mathfrak{M}_{H:L}$ as the set of measures satisfying a condition similar to (\ref{Mateq902}) with $c=0$? The reason is quite simple: In the Tonelli setting all ergodic components of a Mather measure associated to $0\in H_1(M;\R)$ are themselves Mather measures associated to $0\in H_1(M;\R)$. In the general setting which we consider here this need not be true. E.g. we have the following pathological example: Fix $p_0\in \R^n \backslash \{0\}$ and choose $r>0$ such that $0\notin \overline{B_{2r}(p_0)}$. Pick any function $h\in C^{\infty}(B_r(p_0))$ with $h(p_0)=0$ and $dh(p_0)\cdot p_0=0$, but $dh(p_0)\neq 0$. Extend $h$ to $B_r(p_0)\cup (-B_r(p_0))$ by $h(p)=h(-p)$ for all $p\in -B_r(p_0)$. Now extend $h$ to a function on all of $\R^n$ by cutting it off outside $\overline{B_{2r}(p_0)} \cup (-\overline{B_{2r}(p_0)})$. The Hamiltonian $H\in C^{\infty}(\T^n \times \R^n)$ given by $H(q,p)=h(p)$ is integrable with $\alpha_{H:\T^n}(0)=0$ and $\partial \alpha_{H:L}(0)=\{0\}$. Moreover, we have
	\[
	\alpha_{H:\T^n}(p_0)=\alpha_{H:\T^n}(-p_0)=0 \quad \& \quad 	\partial \alpha_{H:\T^n}(p_0)= -\partial \alpha_{H:\T^n}(-p_0)=\{dh(p_0)\}.
	\]
	In particular Corollary \ref{cor3} guarantees the existence of $\mathcal{M}(\phi_H)$-measures $\mu_1$ and $\mu_2$ such that $\rho(\mu_1)=dh(p_0)=-\rho(\mu_2)$ and 
	\begin{align*}
	\mathcal{A}_{H,\lambda}(\mu_1)=\alpha_{H:\T^n}(p_0)-\langle p_0,\rho(\mu) \rangle =0 \\
	\mathcal{A}_{H,\lambda}(\mu_2)=\alpha_{H:\T^n}(-p_0)+\langle p_0,\rho(\mu_2) \rangle  =0.
	\end{align*}
	Moreover, from Remark \ref{rem101} it is not hard to deduce that $\supp(\mu_1)\subset \T^n \times \{p_0\}$ and $\supp(\mu_2)\subset \T^n \times \{-p_0\}$. In particular the measure $\mu:=\frac{\mu_1+\mu_2}{2}$ is $\phi_H$-invariant and satisfies 
	\[
	\mathcal{A}_{H,\lambda}(\mu)=\frac{\mathcal{A}_{H,\lambda}(\mu_1)+\mathcal{A}_{H,\lambda}(\mu_2)}{2}=0=\alpha_{H:\T^n}(0)-\langle 0,\rho(\mu) \rangle
	\]
	and $\rho(\mu)=0\in \partial \alpha_{H:\T^n}(0)$, so $\mu$ formally satisfies (\ref{Mateq902}) with $c=0$. However, $\supp(\mu)$ clearly carries no information about the "dynamics close to the zero-section $\T^n\times \{0\}$".
	
	This example shows that in the non-Tonelli setting the requirement (\ref{Mateq902}) is not enough to guarantee that the support of $\mu$ carries information about the dynamics of $\phi_H$ close to $L$. The definition of $\mathfrak{M}_{H:L}$ in Remark \ref{rem101} guarantees that $\mathfrak{M}_{H:L}$-measures do carry such information. We do not know if $\mathfrak{M}_{H:L}$ contains all Mather measures when $H$ is Tonelli. 
\end{rem}

\begin{rem}[Superlinearity and coercivity of $\alpha_{H:L}$]
	\label{Rem1rev2}
	In Aubry-Mather theory, when $H\in C^{\infty}(T^*L)$ is Tonelli, the associated function $\alpha_{H:L}:H^1(L;\R)\to \R$ is also convex and superlinear. In particular, the set of all subdifferentials of $\alpha_{H:L}$ is all of $H_1(L;\R)$. Hence, all vectors in $H_1(L;\R)$ are realized as rotation vectors of some $\mathcal{M}(\phi_H)$-measure. In the setting of Corollary \ref{cor3}, if $H\in C^{\infty}(M)$ is a Hamiltonian for which 
	\begin{equation}
	\label{rev2eq2}
	\frac{\alpha_{H:L}(c)}{|c|} \stackrel{|c|\to \infty}{\longrightarrow} \infty
	\end{equation}
	(with respect to some norm on $H^1(L;\R)$), then the same conclusion holds true.\footnote{Use e.g. Lebourg's mean value theorem from Section \ref{secmeas} below.} It is interesting to understand which conditions should be imposed on $H$ to achieve (\ref{rev2eq2}). Since $\rho:\mathcal{M}(\phi_H)\to H_1(M;\R)$ is continuous (with respect to the weak$^*$-topology on $\mathcal{M}(\phi_H)$) it is easy to see that its image is compact if $M$ is compact (use weak$^*$-compactness of $\mathcal{M}(\phi_H)$). Hence, to achieve (\ref{rev2eq2}) one will need that $M$ be non-compact. However, even on non-compact $(M,\omega)$ one might not be able to achieve (\ref{rev2eq2}) for any $H$ as the following example shows: Consider the two symplectic manifolds $(T^*N,d\lambda)$ and $(\T^2=S^1\times S^1,\omega_0)$ for $N$ a closed manifold and $\omega_0$ the standard area form on $\T^2$ with $\int_{\T^2}\omega_0=1$. Now take $(M,\omega):=(T^*N\times \T^2, d\lambda \oplus \omega_0)$ and consider the Lagrangian $L:=N\times (S^1 \times \{0\}) \subset M$. Denote by $\eta \in \Omega^1(M)$ the closed 1-form obtained by pulling back the angle 1-form $d\varphi \in \Omega(S^1)$ (with $\int_{S^1}d\varphi=1$) by the projection $M\to \T^2\to S^1\times \{0\}$ and denote by $\psi$ the isotopy generated by the vector field $X\in \mathfrak{X}(M)$ satisfying $\iota_X \omega=\eta$. Then the flow $\psi$ is $1$-periodic, so $L=\psi_{\Z}(L)$ and c) in Theorem \ref{prop1} implies $\alpha_{H:L}(\Z [\eta])=\alpha_{H:L}(0)$ for all admissible $H\in C^{\infty}(M)$. In particular $\alpha_{H:L}(t[\eta])$ will \emph{not} tend to $\infty$ for $t\to \infty$. Of course one could consider this a non-example: In fact $\alpha_{H:L}$ naturally descends to a function on the "cylinder" $H^1(L;\R)/\Z [\eta]\cong H^1(N;\R)\times S^1$ and, viewed as such a function, (\ref{rev2eq2}) will certainly be true for some Hamiltonians $H\in C^{\infty}(M)$.
	
	On the other hand, if $H\in C^{\infty}(T^*N)$ satisfies 
	\[
	\frac{H(q,p)}{|p|} \stackrel{|p|\to \infty}{\longrightarrow}\infty
	\]
	and $H^1(N;\R)$ admits a basis represented by closed 1-forms, all of which are nowhere vanishing, then it is easy to check that (\ref{rev2eq2}) follows. This is in particular the case on $T^*\T^n$. See also \cite{Viterbo10} for an in-depth discussion of this case. 
\end{rem}
\section{Comparison with Mather's theory}
\label{Mathersec}
In this section we compare properties of the support of the measures whose existence is guaranteed by Theorem \ref{thm1} and \ref{thm2} to those which arise in Mather's theory and place our results in a historical context. To accomplish this, it will suffice to consider $(M,\omega)=(T^*\T^2,d\lambda)$.
We will construct an $H$ on $T^*\T^2$ for which the support of $\mathfrak{M}_{H:L}$-measures becomes extremely "wild". This phenomenon is closely related to diffusion phenomena such as Arnold' diffusion and superconductivity channels \cite{BounemouraKaloshin14}. Studying $\mathfrak{M}_{H:L}$-measures in this setting was generously suggested to me by Vadim Kaloshin.

Consider $\R^2(p)$ as well as $\T^2=\R^2 /\Z^2$ equipped with the (mod $\Z^2$) coordinate $q$. Given sufficiently smooth functions $H_0:\R^2 \to \R$ and $F:\T^2 \times \R^2 \to \R$ as well as $\epsilon \geq 0$ we consider the Hamiltonian
\[
H_{\epsilon}(q,p):=H_0(p)+\epsilon F(q,p)
\]
on $(T^*\T^2=\T^2 \times \R^2, dp\wedge dq)$. Throughout this section we make use of the canonical identification $H^1(T^*\T^2,\R)\cong \R^2$. The flow $\phi_{H_0}$ is integrable in the sense that it leaves every Lagrangian torus of the form $T(p):=\T^2 \times \{ p \}$ invariant. KAM theory guarantees that, if $H_0$ is non-degenerate in the sense that its Hessian satisfies
\[
\det(\Hess(H_0))(p)\neq 0 \quad \forall \ p\in \R^2,
\]
then, for all small enough $\epsilon>0$, many Lagrangian tori $T(p)$ persist. More precisely, if the rotation vector of $\phi_{H_{0}}|_{T(p)}$ is Diophantine and $\epsilon >0$ is small enough, then there is a Lagrangian torus which is invariant for $\phi_{H_{\epsilon}}$ close to $T(p)$. We are interested in what happens to the invariant torus $T(p)$ for $\epsilon>0$ when $\phi_{H_0}|_{T(p)}$ does \emph{not} have Diophantine rotation vector. Does it (at least partially) persist or does it disappear? Mather \cite{Mather91} studied the case  
\begin{equation}
\label{equa11}
\det(\Hess(H_0))(p)>0 \quad \forall \ p\in \R^2.
\end{equation}
In this case all $T(p)$ \emph{partially} persist. The invariant set studied by Mather, which is to be thought of as an "avatar" of $T(p)$, is the $p$-\emph{Mather set} $\supp(\mathfrak{M}_{H_{\epsilon}:T(p)})\subset \T^2 \times \R^2$.\footnote{It is a consequence of Mather's theory (or Theorem \ref{thm2}) that this set is always $\neq \emptyset$.} This interpretation of the Mather set relies on two key properties exhibited by $\supp(\mathfrak{M}_{H_{\epsilon}:T(p)})$ under assumption (\ref{equa11}):
\begin{itemize}
	\item 
	Graph property: $\supp(\mathfrak{M}_{H_{\epsilon}:T(p)})\subset \T^2 \times \R^2$ is the graph of a Lipschitz function defined on a subset of $T(p)$.
	\item 
	Localization property: In a suitable sense $\supp(\mathfrak{M}_{H_{\epsilon}:T(p)})$ "is close" to (a subset of) $T(p)$ when $\epsilon >0$ is small.
\end{itemize}
The precise statement of the localization property takes different forms, depending on the dynamics of $\phi_{H_0}|_{T(p)}$. Regardless of the dynamics of $\phi_{H_0}|_{T(p)}$ it is not hard to see that, under assumption \ref{equa11}, one will always have
\begin{equation}
\label{Rev2eq1}
\supp(\mathfrak{M}_{H_{0}:T(p)})\cap \liminf_{\epsilon \downarrow 0}\supp(\mathfrak{M}_{H_{\epsilon}:T(p)})\neq \emptyset,
\end{equation}
where $\liminf$ is to be understood in the sense of Kuratowski. This follows simply because, under assumption (\ref{equa11}), $\mathfrak{M}_{H_{\epsilon}:T(p)}$-measures minimize a Lagrange functional. Mather \cite[Section 5]{Mather91} studied the case when the rotation vector of $\phi_{H_0}|_{T(p)}$ meets a Diophantine condition (i.e. when $T(p)$ is a KAM torus). He found that, under this assumption, $\supp(\mathfrak{M}_{H_{\epsilon}:T(p)})$ converges to $T(p)$ in the Hausdorff distance as $\epsilon \to 0$. The Diophantine condition was later relaxed by Bernard \cite{Bernard00,Bernard10}. He studied the case when the dynamics $\phi_{H_0}|_{T(p)}$ is resonant, but $T(p)$ is foliated by invariant subtori on which the dynamics of $\phi_{H_0}$ are "sufficiently irrational". In this case he found that $\supp(\mathfrak{M}_{H_{\epsilon}:T(p)})$ Hausdorff converges to one of the invariant subtori of $T(p)$ as $\epsilon \to 0$ \cite[Theorem 1]{Bernard00}.

Summing up, Mather's theory (morally speaking) guarantees that, if (\ref{equa11}) is satisfied, then the system cannot be \emph{too} unstable in the sense that every $T(p)$ will partially survive small perturbations of $H_0$. In a different direction, Arnold' conjectured \cite{Arnold64}, \cite{Arnold94} that the Hamiltonian flow generated by a generic Hamiltonian $H\in C^{\infty}(T^*\T^n)$, with $n\geq 3$, will exhibit diffusing orbits (this phenomenon is today known as \emph{Arnold' diffusion}). Hence, if Arnold's conjecture holds true, then all systems with more than two degrees of freedom will exhibit \emph{some} instability.

When 
\begin{equation}
\label{equa2}
\det(\Hess(H_0))(p)<0 \quad \forall \ p\in \R^2
\end{equation}
the candidate for a perturbation of $T(p)$ is again $\supp(\mathfrak{M}_{H_{\epsilon}:T(p)})$. Do these sets satisfy the graph property or the localization property? To our knowledge Herman \cite{Herman88} was the first to observe that, in general, in the case (\ref{equa2}) $\supp(\mathfrak{M}_{H_{\epsilon}:T(p)})$ will \emph{not} be a Lipschitz graph over $T(p)$ (see also \cite{Chen92} for an English exposition of Herman's example). Below we construct an example showing that not only can the graph property be violated, but so can the localization property. In fact the example violates even the simplest localization property (\ref{Rev2eq1}) in the worst possible way. The dynamics responsible for the failure of this kind of convergence is a very fast type of diffusion arising from so-called \emph{superconductivity channels}. We learned about this phenomenon from Bounemoura-Kaloshin's \cite{BounemouraKaloshin14}. A first indication that symplectic methods can be used to study this phenomenon was found in \cite{EntovPolterovich17}. The concept of superconductivity channels goes back to the work of Nekoroshev \cite{Nekoroshev77}, \cite{Nekoroshev79} who discovered that for every $x=(q,p)\in \T^2 \times \R^2$, $\phi_{H_{\epsilon}}^t(x)$ remains close to $\phi_{H_0}^t(x)\in T(p)$ for $|t|\leq e^{c\epsilon^{-a}}$ for some positive constants $c,a>0$, as long as $\epsilon>0$ is sufficiently small and $H_{0}$ is \emph{steep}.\footnote{The "closeness" in this Nekoroshev estimate is in fact quantitative. For precise details we refer to \cite[Section 1.1.2]{BounemouraKaloshin14}} By a result due to Ilyashenko \cite{Ilyashenko86}, the steepness condition can be phrased as requiring that the restriction of $H_0$ to any $1$-dimensional linear subspace of $\R^2$ has only isolated critical points. Hence, the philosophy we follow is that, in order to find diffusion which is fast enough to "push" $\supp(\mathfrak{M}_{H_{\epsilon}:T(p)})$ far away from $T(p)$, we should choose $H_0$ \emph{non}-steep.

\subsection{The example} 
\label{secex}
Consider in symplectic coordinates $\theta=(\theta_1,\theta_2)\in \T^2$ and $I=(I_1,I_2)\in \R^2$ the Hamiltonian
\[
H_{\epsilon}(\theta,I) =I_1I_2 +\epsilon \varphi(I_1)\sin(2\pi \theta_1),
\]
where $\varphi:\R \to [0,1]$ is a smooth function with $\varphi(t)=\varphi(-t)$ and
\[
\varphi(s)=
\begin{cases}
1 & \text{if} \ |s|\leq K \\
0 & \text{if} \ |s|>K+1
\end{cases}, \quad 
\varphi'(s)
\begin{cases}
\geq 0 & \text{if} \ s\leq 0 \\
\leq 0 & \text{if} \ s\geq 0
\end{cases}
\]
for some large constant $K>0$ (see Figure \ref{Mfig1}). Note that, if we view $H_0$ as a function on $\R^2$, then $\det(\Hess(H_0))(I)<0$ for all $I\in \R^2$, so $H_{\epsilon}$ fits into the framework we discussed above.
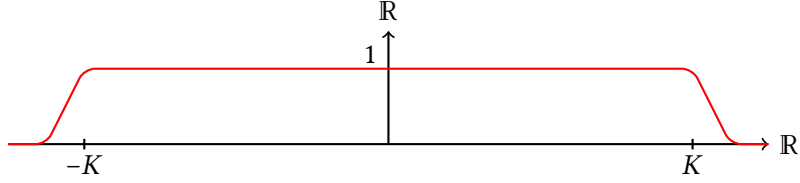
\begin{figure}[h]
	\begin{tikzpicture}[scale=1]
\draw [thick, ->] (-5,0) -- (5,0);
\draw [thick, ->] (0,0) -- (0,1.5);
\draw [thick] (-4,-0.07) -- (-4,0.07);
\draw [thick] (4,-0.07) -- (4,0.07);
\node [right] at (5,0) {\small $\R$};
\node [above] at (0,1.5) {\small $\R$};
\node [left] at (0,1.2) {\small $1$};
\node [below] at (-4,0) {\small $-K$};
\node [below] at (4,0) {\small $K$};
\draw [thick, red, rounded corners] (-5,0) -- (-4.5,0) -- (-4.0,1) -- (0,1) -- (4,1) -- (4.5,0) -- (5,0);
\end{tikzpicture}
	\caption{Graph of $\varphi$ indicated in red.}
	\label{Mfig1}
\end{figure}
The resulting equations of motion are given by 
\begin{equation}
\label{equa4}
\begin{cases}
\dot{I}_1=-2\pi \epsilon \varphi (I_1)\cos(2\pi \theta_1)\\
\dot{I}_2=0 \\
\dot{\theta}_1=I_2+\epsilon \varphi'(I_1)\sin(2\pi \theta_1) \\
\dot{\theta}_2=I_1. 
\end{cases}
\end{equation}
In particular, we see that $\mathcal{O}_+(X)$ is compact for every compact $X$, so Corollary \ref{cor3} applies to this system.
\begin{rem}
\label{rem000}
The superconductivity channels $\{I_1=0\}\cup \{I_2=0\}$ give rise to diffusion: Consider initial conditions 
\[
(\theta_1(0),\theta_2(0),I_1(0),I_2(0))
\]
such that $I_2(0)=0$, $|I_1(0)|<K$ and $\sin(2\pi \theta_1(0))=0$. Then $t\mapsto \theta_1(t)$ is constant and 
\[
\dot{I}_1=\pm 2\pi \epsilon \varphi(I_1),
\]
so the function $t\mapsto |I_1(t)-I_1(0)|$ is strictly increasing (assuming $\epsilon>0$). We denote by 
\[
\mathcal{D}:=\{ (\theta_1,\theta_2,I_1,I_2)\ |\ \sin(2\pi \theta_1)=0, \ I_2=0, \ |I_1|<K \}
\]
the set of initial conditions with $|I_1(0)|<K$ which diffuse. Poincar{\'e} recurrence implies
\begin{equation}
\label{equa9}
\mu(\mathcal{D})=0 \quad \forall \ \mu \in \mathcal{M}(\phi_{H_{\epsilon}}),
\end{equation}
for every $\epsilon>0$.
\end{rem}
Note that if $\epsilon=0$, then (by Remark \ref{rem101} and the fact that each $T(I_1,I_2)$ is $\phi_{H_0}$-invariant) 
\[
\supp(\mathfrak{M}_{H_0:T(I_1,I_2)})\subset T(I_1,I_2).
\]
In contrast, when $\epsilon >0$ we have 
\begin{prop}
\label{propdiff}
Fix $I\in (-K,K)$ and denote by $B_K\subset \R^2$ the open $K$-ball centered at $0$. For every $\epsilon >0$ we have 
\[
\supp(\mathfrak{M}_{H_{\epsilon}:T(I,0)})\cap (\T^2 \times B_K)=\emptyset.
\]
\end{prop}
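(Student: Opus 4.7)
The plan is to combine the structural description of $\mathfrak{M}_{H_\epsilon:T(I,0)}$-measures in Remark~\ref{rem101} with the conservation $\dot I_2\equiv 0$ visible from~\eqref{equa4} and an action-theoretic exclusion. As a preliminary, since $L:=T(I,0)$ is $\lambda$-exact only when $I=0$, I would replace $\lambda$ by $\lambda':=\lambda-I\,dq_1$ (still satisfying $d\lambda'=\omega$), for which $L$ becomes $\lambda'$-exact and the non-compact framework of Section~\ref{noncompact} together with Corollary~\ref{cor3} applies directly.

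Following Remark~\ref{rem101}, I would write $\mu=\sum_i\lambda_i\mu^{(i)}$, where each $\mu^{(i)}$ is the weak$^*$-limit of orbit time-averages $\mu_\varsigma^{(i)}$ along orbits $\gamma_\varsigma^{(i)}=\{\phi_{H_\epsilon}^t(x_\varsigma^{(i)})\}_{t\in[0,k_\varsigma^{(i)}]}$ with $x_\varsigma^{(i)},\phi_{H_\epsilon}^{k_\varsigma^{(i)}}(x_\varsigma^{(i)})\in\psi_\varsigma^{(i)}(L)$, $\psi_\varsigma^{(i)}\to\id$ and $k_\varsigma^{(i)}\to\infty$, whose normalized actions converge to $\sigma_{H_\epsilon:L}(0)$. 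Conservation of $I_2$ forces $I_2(\gamma_\varsigma^{(i)}(\cdot))=I_2(x_\varsigma^{(i)})\to 0$, so passing to the limit yields $\supp(\mu^{(i)})\subset\{I_2=0\}$. On this slice the restricted flow has the additional conserved quantity $E(\theta_1,I_1):=\epsilon\varphi(I_1)\sin(2\pi\theta_1)$, and each orbit lies on a level set of $E$; the two circles
\[
C_\pm:=\{\theta_1\in\{1/4,3/4\},\,I_1=I,\,I_2=0,\,\theta_2\in\T\},
\]
corresponding to the extremal levels $E=\pm\epsilon$, are $\phi_{H_\epsilon}$-invariant and entirely contained in $\T^2\times B_K$ (since $|I|<K$).

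To exclude any $\mathfrak{M}$-component supported on, or approximating, these circles, I would invoke Corollary~\ref{cor3}: a direct computation from~\eqref{eq8} using $\lambda'$ gives $\mathcal{A}_{H_\epsilon,\lambda'}(\mu_\pm^I)=\pm\epsilon$ for the Haar measures $\mu_\pm^I$ on $C_\pm$, so the condition $\mathcal{A}_{H_\epsilon,\lambda'}(\mu^{(i)})=\alpha_{H_\epsilon:L}(0)$ would force $\alpha_{H_\epsilon:L}(0)\in\{\pm\epsilon\}$ as a necessary condition for such a component to exist. The main obstacle of the proof is therefore to rule out $\alpha_{H_\epsilon:L}(0)=\pm\epsilon$ for every $\epsilon>0$. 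My plan for this is to exploit the symplectic involution $\Phi:(\theta_1,\theta_2,I_1,I_2)\mapsto(\theta_1+\tfrac12,\theta_2,I_1,I_2)$, which preserves $L$ and satisfies $\Phi^*H_\epsilon=H_{-\epsilon}$; by naturality of the Leclercq-Zapolsky spectral invariant this yields $\alpha_{H_\epsilon:L}(0)=\alpha_{H_{-\epsilon}:L}(0)$, i.e.\ $\alpha_{H_\epsilon:L}(0)$ is an even function of $\epsilon$. Combined with the identification $\alpha_{H_0:L}(0)=0$ (which I would obtain from the $\phi_{H_0}$-invariance of $L$ together with $H_0|_L\equiv 0$) and the standard Hofer-Lipschitz bound $|\alpha_{H_\epsilon:L}(0)|\leq\|H_\epsilon-H_0\|_{C^0}=\epsilon$, these constraints should confine $\alpha_{H_\epsilon:L}(0)$ to the open interval $(-\epsilon,\epsilon)$, provided one can promote the Hofer-Lipschitz estimate to a \emph{strict} inequality using a quantitative spectral-invariant computation exploiting that the two extremal Floer generators arising from $C_+$ and $C_-$ contribute with opposite action signs. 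I expect this last refinement to be the genuinely hard step, since the non-Tonelli nature of $H_\epsilon$ prevents an immediate appeal to the Legendre-dual identification of $\alpha$ with Mather's $\alpha$-function.
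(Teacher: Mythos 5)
Your strategic outline is on the right track---decompose via Remark~\ref{rem101}, use conservation of $I_2$ to localise to $\{I_2=0\}$, and exclude the extremal-energy circles by comparing their action to $\alpha_{H_\epsilon:L}(0)$---but the step you flag as ``genuinely hard'' is a real gap, and the tools you propose cannot close it. The evenness constraint $\alpha_{H_\epsilon:L}(0)=\alpha_{H_{-\epsilon}:L}(0)$ (which does follow from the involution $\Phi$), together with $\alpha_{H_0:L}(0)=0$ and the Hofer--Lipschitz bound $|\alpha_{H_\epsilon:L}(0)|\leq\epsilon$, are all simultaneously satisfied by the function $\epsilon\mapsto|\epsilon|$, which \emph{does} hit the forbidden value $+\epsilon$. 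So no amount of ``promoting the Lipschitz estimate'' can produce the strict inequality from these three constraints alone.

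What the paper does instead (Lemma~\ref{lemmat1}) is compute $\alpha_{H_\epsilon}(I,0)=0$ \emph{exactly}, by picking an embedded circle $S\subset S^1\times\R$ Hamiltonian isotopic to $\{I_1=I\}$ whose trace in $\{|I_1|\leq K+2\}$ lies on $\{\theta_1\in\{0,\tfrac12\}\}$; then $H_\epsilon|_{S\times\{I_2=0\}}\equiv 0$, and sandwiching via property b) of Theorem~\ref{prop1} (combined with property d) to move $T(I,0)$ to $S\times\{I_2=0\}$ by a Hamiltonian isotopy) forces the $\alpha$-value to vanish. This exact vanishing is the crux, and you would need to find it.

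Two smaller remarks. First, your reduction to the circles $C_\pm$ skips the case of ergodic components at energies $E\in(-\epsilon,\epsilon)$ on $\{I_2=0\}$; these are handled by Poincar\'e recurrence (the orbits there are transient in $\T^2\times B_K$, so the measure of this region is zero, cf.\ Remark~\ref{rem000} and the extended drift for $K\leq|I_1|<K+1$), which you should make explicit. Second, the paper's actual exclusion step is analytic rather than dynamical: after getting $\mathcal{A}_{H_\epsilon,\lambda}(\mu)=0$ (which requires the Claim showing the rotation term drops out---your change of primitive to $\lambda'$ only shifts this issue to the perturbed Lagrangians $\psi_\varsigma(L)$) it combines the identity (\ref{MEQ3}) with the energy-conservation estimate (\ref{Meq6}) to force $\sin(2\pi\theta_1^\varsigma(0))\to 0$, hence $\supp(\mu)\cap\{\varphi>0\}\subset\{\sin(2\pi\theta_1)=0\}$, which has measure zero by recurrence. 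Your circle-level-set picture is an acceptable alternative phrasing, but it still relies on the $\alpha$-computation you have not supplied.
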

\begin{figure}[h]
	\centering
	\begin{tikzpicture}[scale=1]
\draw [thick] (0,0) circle [radius=2];
\node [right] at (1,2) {\small $B_K$};
\draw [thick, ->] (-2.5,0) -- (2.5,0);
\draw [thick, ->] (0,-2.5) -- (0,2.5);
\node [below] at (2.5,0) {\small $I_1$};
\node [right] at (0,2.5) {\small $I_2$};
\draw [fill, red] (0,0) circle [radius=0.08];
\draw [fill, blue] (2.05,-0.05) rectangle (2.3,0.05);
\draw [fill, blue] (-2.05,-0.05) rectangle (-2.3,0.05);
\end{tikzpicture}
	\caption{The projection of $\supp(\mathfrak{M}_{H_0:T(0,0)})$ to $\R^2$ is contained in the red spot in the center. For all $\epsilon>0$, the projection of $\supp(\mathfrak{M}_{H_{\epsilon}:T(0,0)})$ to $\R^2$ is contained in the blue regions.}
	\label{fig03}
\end{figure}
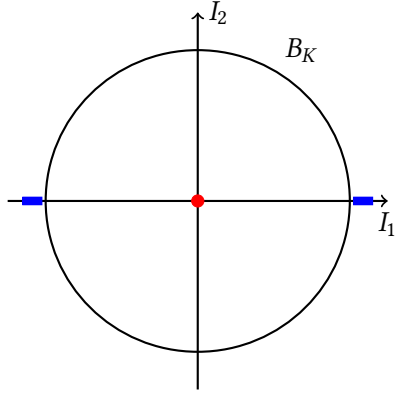
\begin{rem}
	This proposition exhibits the violation of the localization property for $\mathfrak{M}_{H_{\epsilon}:L}$ when $H_0$ fails to meet the convexity condition (\ref{equa11}) in the following sense: If we impose the convexity hypothesis on $H_0$, a result due to Bernard \cite{Bernard00} implies that $\mathfrak{M}_{H_{\epsilon}:T(I,0)}$ Hausdorff-converges to the circle 
	\[
	\{(\tfrac{1}{4},\theta_2,I,0) \ |\ \theta_2\in S^1\}\subset T(I,0)
	\]
	as $\epsilon \downarrow 0$ for all $I\in \R \backslash \mathbb{Q}$ with $|I|<K$. Clearly, Proposition \ref{propdiff} implies that this is not the case for $\mathfrak{M}_{H_{\epsilon}:T(I,0)}$ in our setting. To present how our example fits into Bernard's theory \cite{Bernard00} we will from now on impose the assumption that $H_0$ be convex\footnote{Which, of course, it clearly is \emph{not}.} and check that, after adding this hypothesis to our example, it formally fits into Bernard's framework. In Bernard's terminology our $\phi_{H_{0}}^1$ is denoted by $\Phi_0$, and its lift to $\R^2\times \R^2$ is denoted by $\phi_0$. Bernard's \textsc{Hypothesis 1} requires that there is a diffeomorphism $w:\R^2\to \R^2$ such that\footnote{We denote by $\Theta=(\Theta_1,\Theta_2)$ standard coordinates on $\R^2$ which project to $(\theta_1,\theta_2)$ on $\T^2$.} 
	\[
	\phi_0(\Theta,I)=(\Theta+w(I),I).
	\]
	Clearly, in our case $w(I_1,I_2)=(-I_2,-I_1)$. A generating function $S_0:\R^2 \times \R^2 \to \R$ for $\phi_0$ is a smooth function such that $\phi_0(\Theta(0),I(0))=(\Theta(1),I(1))$ if and only if 
	\[
	I(0)=\partial_{1}S_0(\Theta(0),\Theta(1)) \quad \text{and} \quad I(1)=-\partial_{2}S_0(\Theta(0),\Theta(1)).
	\]
	In our example we can take 
	\[
	S_0(\Theta(0),\Theta(1))=H_0(\Theta(1)-\Theta(0))=(\Theta_1(1)-\Theta_1(0))(\Theta_2(1)-\Theta_2(0)),
	\] 
	so our imposed convexity condition on $H_0$ amounts to Bernard's \textsc{Hypothesis 2}. 
	
	Bernard studies the Aubry-Mather set for the diffeomorphism $\Phi_{\epsilon}:T^*\T^2 \to T^*\T^2$ whose pull-back $\phi_{\epsilon}$ to $\R^2\times \R^2$ has as a generating function 
	\[
	S_{\epsilon}(\Theta(0),\Theta(1))=S_0(\Theta(0),\Theta(1))+\epsilon P(\Theta(0),\Theta(1)),
	\] 
	for a function $P:\R^2 \times \R^2\to \R$ which satisfies an exactness condition (Bernard's \textsc{Hypothesis 3}). Moreover, as he explains, one can assume that $S_{\epsilon}$ is "standard at $\infty$" (\textsc{Hypothesis 4}). Under our imposed convexity assumption on $H_0$, $\phi_{H_{\epsilon}}^1$ would also admit a generating function $S_{\epsilon}$ of the type above, and the two latter assumptions would also hold true in our setting (after perhaps adapting $S_{\epsilon}$ "at $\infty$"). Fix now $I\in \R \backslash \mathbb{Q}$ such that $|I|<K$. Then the torus $T(I,0)$ is foliated by an $S^1$-family of $\phi_{H_0}^1$-invariant circles $S^1(\theta_1)=\{(\theta_1,\theta_2,I,0) \ | \ \theta_2 \in S^1\},\ \theta_1\in S^1$. Bernard introduces the \emph{averaged perturbation} which in our situation is given by 
	\[
	\mathcal{P}(\Theta_1)=\int_{S^1}P(\Theta_1,\Theta_2,\Theta_1,\Theta_2-I) d\Theta_2.
	\]
	$\mathcal{P}$ descends to a function $\mathcal{P}:S^1 \to \R$ and Bernard's \textsc{Hypothesis 5} requires that this function have a unique non-degenerate minimum $\theta_1^*\in S^1$. Once we have checked that this is the case in our example (under our imposed convexity assumption) Bernard's \cite[Theorem 1]{Bernard00} would imply that $\mathfrak{M}_{H_{\epsilon}:T(I,0)}$ Hausdorff-converges to the subset $S^1(\theta_1^*)\subset T(I,0)$ as $\epsilon \downarrow 0$. 
	
	To see that $\mathcal{P}$ would have to have a unique non-degenerate minimum on $S^1$ in our example, note that $S_0$ vanishes on $T(I,0)$, so in fact we have $S_{\epsilon}=\epsilon P$ on $T(I,0)$. Recall \cite[Section 9.1]{McDuffSalamon17} that the generating function $S_{\epsilon}$ of the lift $\phi_{\epsilon}$ of $\phi_{H_{\epsilon}}^1$ is (up to addition by a constant) given by the symplectic action in the following sense: Given $(\Theta(0),\Theta(1))\in \R^2 \times \R^2$ there exist (due to the convexity assumption) unique $(I(0),I(1))\in \R^2 \times \R^2$ such that $\phi_{\epsilon}(\Theta(0),I(0))=(\Theta(1),I(1))$ and 
	\begin{align*}
	S_{\epsilon}(\Theta(0),\Theta(1))&=\int_0^1 \langle \lambda ,X_{H_{\epsilon}}\rangle (\Theta(t),I(t))-H_{\epsilon}(\Theta(t),I(t))\ dt\\
	&=-\epsilon \int_0^1 (\varphi(I_1(t))-I_1(t)\varphi'(I_1(t)))\sin(2\pi \Theta_1(t)) \ dt
	\end{align*}
	Now from (\ref{equa4}) it follows that, if $\epsilon >0$ is small enough then $|I_1(t)|<K$ for all $t\in [0,1]$, so that in particular $\Theta_1(t)=\Theta_1(0)$ for all $t\in [0,1]$ (since $I_2(t)=0$). Hence, for such $\epsilon>0$ we have 
	\begin{align*}
	S_{\epsilon}(\Theta(0),\Theta(1))=-\epsilon \sin(2\pi \Theta_1(0)),
	\end{align*}
	and thus 
	\[
	\mathcal{P}(\theta_1)=-\sin(2\pi \theta_1), \quad \theta_1\in S^1.
	\]
	So clearly $\mathcal{P}$ would have a unique non-degenerate minimum at $\theta_1^*=\frac{1}{4}$. This finishes the demonstration that our example, formally speaking, fits into Bernard's framework if only one imposes a convexity condition on $H_0$. 
\end{rem}

The rest of this section will be spent proving Proposition \ref{propdiff}. To make the notation a little less heavy we will write $\alpha_{H_{\epsilon}}(I_1,I_2)=\alpha_{H_{\epsilon}:T(0,0)}(I_1,I_2)$ for $(I_1,I_2)\in \R^2$.
\begin{rem}
The conclusion of Proposition \ref{propdiff} resonates well with classical insight from symplectic topology. For example, it is nowadays well understood that $\dim_{\R}H_*(M;\R)\leq \# \text{Fix}(\phi)$ for non-degenerate $\phi \in \Ham(M,\omega)$ (this is one of the celebrated Arnol'd conjectures). Symplectic topology can verify this inequality, but it cannot say anything about where in $M$ the fixed points are located.
\end{rem}
\begin{lemma}
\label{lemmat1}
For every $\epsilon \geq 0$ we have
\[
\alpha_{H_{\epsilon}}(I,0)=0 \quad \forall \ I\in \R.
\]
\end{lemma}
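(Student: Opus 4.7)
The plan is to compute $\alpha_{H_\epsilon}(I,0) = \sigma_{H_\epsilon:L}(I,0)$ directly from the definition \eqref{eq2}. I choose the symplectic isotopy $\psi_t(\theta, I_1, I_2) = (\theta, I_1 + tI, I_2)$, which is generated by the vector field $X = I\,\partial_{I_1}$ (so $\iota_X\omega = I\,d\theta_1$) and has $\Flux(\psi_1) = (I, 0)\in H^1(\T^2;\R)$. Conjugation by $\psi_1$ pulls back Hamiltonians, giving $\psi_1^{-1}\tilde{\phi}_{H_\epsilon}^k\psi_1 = \tilde{\phi}_K^k$ for
\[
K(\theta, I_1, I_2) = H_\epsilon\circ\psi_1 = (I_1 + I)I_2 + \epsilon\,\varphi(I_1 + I)\sin(2\pi\theta_1),
\]
so that $\alpha_{H_\epsilon}(I,0) = \lim_{k\to\infty}k^{-1}\,l_L(\tilde{\phi}_K^k)$.

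When $|I| > K+1$ (and in particular when $\epsilon = 0$), $\varphi(I) = 0$ forces $K|_L \equiv 0$, together with $\dot{I}_1|_L = -2\pi\epsilon\varphi(I)\cos(2\pi\theta_1) = 0$ and $\dot{I}_2|_L = 0$. Hence $L = \T^2\times\{0\}$ is $\phi_K$-invariant with $K|_L$ constant, and the Lagrangian control property in Theorem~\ref{prop1} yields $l_L(\tilde{\phi}_K^k) = 0$ for every $k$, so $\alpha_{H_\epsilon}(I,0) = 0$.

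When $\epsilon > 0$ and $|I| \leq K+1$, $L$ is no longer $\phi_K$-invariant and $K|_L$ is not constant, so direct Lagrangian control does not apply. The strategy is to exploit that $I_2$ is still a first integral of $\phi_K$: the hypersurface $\{I_2 = 0\}$ is $\phi_K$-invariant and contains $L$, and on this slice the dynamics decomposes into a $\theta_2$-drift $\dot{\theta}_2 = I_1 + I$ coupled to the two-dimensional Hamiltonian flow in $(\theta_1, I_1)$ generated by
\[
\tilde{K}(\theta_1, I_1) = \epsilon\,\varphi(I_1 + I)\sin(2\pi\theta_1)
\]
on $T^*S^1$. The function $\tilde{K}$ is $C^0$-bounded by $\epsilon$, has compact $I_1$-support, and enjoys the $\Z_2$-symmetry $\tilde{K}\circ R_{1/2} = -\tilde{K}$, where $R_{1/2}:\theta_1\mapsto\theta_1+\tfrac{1}{2}$ preserves the zero section $\tilde{L} = S^1\times\{0\}$. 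Combined with the Poincar\'e-duality relation between the two Lagrangian spectral invariants $l_{\tilde{L}}^{\pm}$ on $T^*S^1$, this symmetry forces $\lim_k k^{-1}\,l_{\tilde{L}}(\tilde{\phi}_{\tilde{K}}^k) = 0$.

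The main technical obstacle is the dimensional-reduction step: showing that $l_L(\tilde{\phi}_K^k)$ on $T^*\T^2$ is genuinely controlled by $l_{\tilde{L}}(\tilde{\phi}_{\tilde{K}}^k)$ on $T^*S^1$. This requires an $\omega$-compatible almost complex structure on $T^*\T^2$ that is of product type along the invariant slice $\{I_2 = 0\}$, so that Floer trajectories realizing $l_L$ can be confined to this slice; once this is in place, the 2D symmetry argument transfers to give $\alpha_{H_\epsilon}(I,0) = 0$ across the remaining regime.
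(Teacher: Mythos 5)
Your computation for the regime $|I| > K+1$ (and for $\epsilon = 0$) is correct: after conjugating by $\psi_1$ the restriction $K|_L$ vanishes identically, and the Lagrangian control property gives $l_L(\tilde{\phi}_K^k) = 0$ directly. However, the remaining regime $\epsilon>0$, $|I|\leq K+1$ is not proved; it is only a sketch, and the central reduction step has a genuine gap. The hypersurface $\{I_2 = 0\}\subset T^*\T^2$ is coisotropic of codimension one, so there is no almost complex structure that would confine Floer strips for $l_L$ to it --- this is not a matter of choosing a convenient product $J$; pseudo-holomorphic curves simply have no reason to stay on a non-symplectic hypersurface. Consequently there is no established mechanism by which $l_L(\tilde{\phi}_K^k)$ on $T^*\T^2$ is ``controlled by'' $l_{\tilde L}(\tilde{\phi}_{\tilde K}^k)$ on the reduced $T^*S^1$; such a comparison between spectral invariants across a symplectic reduction would itself be a substantial theorem, and is not available off the shelf. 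The appeal to the $\Z_2$-symmetry and ``Poincar\'e duality'' for $l_{\tilde L}^{\pm}$ is likewise only heuristic: the antisymmetry $\tilde K\circ R_{1/2}=-\tilde K$ relates $l_{\tilde L}^+(\tilde{\phi}_{\tilde K})$ to $l_{\tilde L}^+(\tilde{\phi}_{-\tilde K})$, but the duality $l^+(\tilde\phi)=-l^-(\tilde\phi^{-1})$ involves the \emph{other} spectral invariant, and showing that the homogenized $l^+$ vanishes still requires an argument.

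The paper avoids all of this with a single, elementary move that handles every $I$ uniformly. Property c) of Theorem~\ref{prop1} reduces the claim to $\alpha_{H_\epsilon,T(I,0)}(0,0)=0$. One then chooses an embedded circle $S\subset S^1\times\R$ that is Hamiltonian isotopic to $\{I_1=I\}$ and satisfies $S\cap\{|I_1|\leq K+2\}=\{(\theta_1,I_1)\,:\,\theta_1\in\{0,\tfrac12\},\ |I_1|\leq K+2\}$. The Hamiltonian invariance (property d)) gives $\alpha_{H_\epsilon,T(I,0)}(0,0)=\alpha_{H_\epsilon,S\times\{I_2=0\}}(0,0)$, and on the Lagrangian $S\times\{I_2=0\}$ the Hamiltonian $H_\epsilon$ vanishes identically --- where $|I_1|\leq K+2$ the factor $\sin(2\pi\theta_1)$ vanishes, where $|I_1|>K+2$ the cutoff $\varphi$ vanishes, and $I_1 I_2 = 0$ throughout --- so the Lagrangian control property b) forces the value to be $0$. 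The lesson is that the obstruction in your $|I|\leq K+1$ case disappears once you allow yourself to \emph{deform the Lagrangian by a Hamiltonian isotopy} (which leaves $\sigma_{H:L}$ unchanged) rather than only by the flux-realizing isotopy $\psi_t$, and that the useful deformation exploits the vanishing loci of $\varphi$ and of $\sin(2\pi\theta_1)$ simultaneously.
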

The proof of this lemma (which uses nothing but standard properties of Lagrangian spectral invariants) is presented in Section \ref{secproof2}. The proof of Proposition \ref{propdiff} consists in studying carefully the solutions to (\ref{equa4}). From Remark \ref{rem101} and the proof of Theorem \ref{thm1} we know that, in order to study the support of $\mathfrak{M}_{H_{\epsilon}:T(I,0)}$-measures for $\epsilon>0$, it suffices to study the support of $\mu \in \mathcal{M}(\phi_{H_{\epsilon}})$ which arise as the weak$^*$-limits of sequence $(\mu_{\varsigma})_{\varsigma\in \N}\subset \mathcal{M}$ characterized by (\ref{equa12}). In the current setting we can write $(x_{\varsigma})_{\varsigma \in \N}=(\theta^{\varsigma}_1(0),\theta_2^{\varsigma}(0),I_1^{\varsigma}(0),I_2^{\varsigma})_{\varsigma \in \N}$ ($I_2$ is an integral of motion). The conditions described in Remark \ref{rem101} and in the proof of Theorem \ref{thm1} now amount to 
\begin{align}
&(I_1^{\varsigma}(0),I_2^{\varsigma})\stackrel{\varsigma \to \infty}{\longrightarrow} (I,0) \label{equa5}\\
&\frac{\mathcal{A}_{H_{\epsilon}:T(I_1^{\varsigma}(0),I_2^{\varsigma}(0))}(\gamma_{\varsigma})}{k_{\varsigma}}\stackrel{\varsigma \to \infty}{\longrightarrow}\alpha_{H_{\epsilon}}(I,0)=0. \label{equa7}
\end{align}
From the computation in Remark \ref{Mrem1} on page \pageref{Mrem1} below, it follows that
\[
\frac{\mathcal{A}_{H_{\epsilon}:T(I_1^{\varsigma}(0),I_2^{\varsigma}(0))}(\gamma_{\varsigma})}{k_{\varsigma}}=\mathcal{A}_{H_{\epsilon}:\lambda}(\mu_{\varsigma})+\int \langle \eta_{\varsigma},X_{H_{\epsilon}} \rangle d\mu_{\varsigma},
\]
where the closed 1-form $\eta_{\varsigma}$ is given by 
\[
\eta_{\varsigma}=I_1^{\varsigma}(0)d\theta_1+I_2^{\varsigma}(0)d\theta_2.
\]
Hence, as a consequence of the following claim we conclude that (\ref{equa7}) amounts to saying
\begin{equation}
\label{MEQ2}
\mathcal{A}_{H_{\epsilon}:\lambda}(\mu)=\lim_{\varsigma \to \infty} \mathcal{A}_{H_{\epsilon}:\lambda}(\mu_{\varsigma})=0.
\end{equation}
\begin{claim}
	$\lim_{\varsigma \to \infty}\int \langle \eta_{\varsigma},X_H \rangle d\mu_{\varsigma}=0$
\end{claim}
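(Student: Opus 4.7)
The strategy is to compute $\int\langle\eta_\varsigma,X_{H_\epsilon}\rangle\,d\mu_\varsigma$ by direct substitution from the equations of motion, isolate a single scalar average, and then exploit $\phi_{H_\epsilon}$-invariance of the weak$^*$ limit $\mu$ together with the conservation law $\dot I_2=0$.

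First I would note that all $\mu_\varsigma$ share a common compact support: by (\ref{equa4}) $I_2$ is conserved, and $\supp\varphi\subset[-K-1,K+1]$ bounds $|\dot I_1|$ uniformly, so every forward orbit starting near $T(I,0)$ remains in a fixed compact $X\subset \T^2\times\R^2$. Since $I_2^\varsigma(0)\to 0$, we also have $\supp(\mu)\subset X\cap\{I_2=0\}$.

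Second, using $\eta_\varsigma=I_1^\varsigma(0)d\theta_1+I_2^\varsigma(0)d\theta_2$ and (\ref{equa4}),
\[
\int\langle\eta_\varsigma,X_{H_\epsilon}\rangle\,d\mu_\varsigma = I_1^\varsigma(0)\!\int\dot\theta_1\,d\mu_\varsigma + I_2^\varsigma(0)\!\int\dot\theta_2\,d\mu_\varsigma.
\]
The second term vanishes in the limit since $I_2^\varsigma(0)\to 0$ while $\int\dot\theta_2\,d\mu_\varsigma=\int I_1\,d\mu_\varsigma$ is uniformly bounded on $X$. Substituting $\dot\theta_1=I_2+\epsilon\varphi'(I_1)\sin(2\pi\theta_1)$ and $I_2\equiv I_2^\varsigma(0)$ into the first term reduces it to $I_1^\varsigma(0)I_2^\varsigma(0)+\epsilon I_1^\varsigma(0)\!\int\varphi'(I_1)\sin(2\pi\theta_1)\,d\mu_\varsigma$, whose first summand also tends to $0$. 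Since $\varphi'(I_1)\sin(2\pi\theta_1)$ is continuous with support inside $X$ (via $\varphi'$), weak$^*$-convergence $\mu_\varsigma\to\mu$ reduces the claim to
\[
\int\varphi'(I_1)\sin(2\pi\theta_1)\,d\mu=0.
\]

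Third, this vanishing should be extracted from $\phi_{H_\epsilon}$-invariance of $\mu$: one chooses a smooth observable $g$ with $\{g,H_\epsilon\}=\varphi'(I_1)\sin(2\pi\theta_1)$ modulo terms proportional to $I_2$ (hence vanishing on $\supp(\mu)$), so that $\int\{g,H_\epsilon\}\,d\mu=0$ closes the argument.

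The main obstacle is precisely the construction of such $g$: the first-order candidate $g=-\cos(2\pi\theta_1)/(2\pi)$ yields $\varphi'(I_1)\sin^2(2\pi\theta_1)$ on $\{I_2=0\}$ rather than $\varphi'(I_1)\sin(2\pi\theta_1)$, and variants involving $F(I_1)$ do not do better. A robust alternative is to pass to the universal cover $\R^2\times\R^2$, on which $\eta_\varsigma=df_\varsigma$ with $f_\varsigma(\Theta,I)=I_1^\varsigma(0)\Theta_1+I_2^\varsigma(0)\Theta_2$, so that
\[
\int\langle\eta_\varsigma,X_{H_\epsilon}\rangle\,d\mu_\varsigma = \frac{f_\varsigma(\tilde\gamma_\varsigma(k_\varsigma))-f_\varsigma(\tilde\gamma_\varsigma(0))}{k_\varsigma};
\]
the near-closing endpoint condition $\phi_{H_\epsilon}^{k_\varsigma}(x_\varsigma)\in\psi_\varsigma(L)$ with $\psi_\varsigma\to\id$ from Remark~\ref{rem101}, together with the action asymptotics (\ref{equa7}) and Lemma~\ref{lemmat1}, should then force this boundary difference to be $o(k_\varsigma)$, yielding the claim.
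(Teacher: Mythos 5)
Your decomposition in the first three steps is correct and matches, after unwinding definitions, the paper's target: since $\supp(\mu)\subset\{I_2=0\}$, showing $\int\varphi'(I_1)\sin(2\pi\theta_1)\,d\mu=0$ is (up to the factor $\epsilon I$) exactly showing $\langle d\theta_1,\rho(\mu)\rangle=0$, which is what the paper proves. However, your final step has a genuine gap. You correctly discard the Poisson-bracket idea, but the fallback via the universal cover does not close the argument. The identity $\int\langle\eta_\varsigma,X_{H_\epsilon}\rangle\,d\mu_\varsigma=(f_\varsigma(\tilde\gamma_\varsigma(k_\varsigma))-f_\varsigma(\tilde\gamma_\varsigma(0)))/k_\varsigma$ reduces the claim to $(\Theta_1(k_\varsigma)-\Theta_1(0))/k_\varsigma\to 0$, but neither of the two inputs you invoke can deliver this. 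The near-closing condition $\phi_{H_\epsilon}^{k_\varsigma}(x_\varsigma)\in\psi_\varsigma(L)$ with $\psi_\varsigma\to\id$ only controls the endpoints of $\gamma_\varsigma$ up to the deck group; it says nothing about how many times $\theta_1$ winds around $S^1$, so it cannot bound $\Theta_1(k_\varsigma)-\Theta_1(0)$. And invoking the action asymptotics~(\ref{equa7}) would be circular: by~(\ref{Mateq901}), the quantity that~(\ref{equa7}) controls is the \emph{sum} $\mathcal{A}_{H_\epsilon,\lambda}(\mu_\varsigma)+\int\langle\eta_\varsigma,X_{H_\epsilon}\rangle\,d\mu_\varsigma$, and splitting off the second summand is precisely what the claim is about.

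The idea you are missing is the confinement of the $\theta_1$-dynamics inside $\{I_2=0\}$, already recorded in Remark~\ref{rem000}: on $\{I_2=0\}$ the set $\{\sin(2\pi\theta_1)=0\}$ is invariant, so along every orbit in $\supp(\mu)$ the sign of $\sin(2\pi\theta_1)$ is constant, i.e.\ $\theta_1$ stays in a fixed closed half-circle. The paper then argues by contradiction: if $\langle I\,d\theta_1,\rho(\mu)\rangle\neq 0$, ergodic decomposition produces an ergodic $\nu$ supported in $\{I_2=0\}$ with $\langle I\,d\theta_1,\rho(\nu)\rangle\neq 0$, hence a nonzero Birkhoff average of $\dot\theta_1$; but because $\theta_1$ is confined to a half-circle along $\nu$-typical orbits, $d\theta_1$ is exact on a neighbourhood of each such orbit closure, forcing the Birkhoff average of $\dot\theta_1$ to vanish. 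This confinement argument is the step that makes your $(\Theta_1(k_\varsigma)-\Theta_1(0))/k_\varsigma\to 0$ (or equivalently $\int\varphi'(I_1)\sin(2\pi\theta_1)\,d\mu=0$) actually go through, and it cannot be replaced by the endpoint or action inputs you listed.
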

\begin{proof}
	Suppose for contradiction that this were not the case. Then the rotation vector $\rho(\mu)\in \R^2$ of the weak$^*$-limit $\mu$ of $(\mu_{\varsigma})_{\varsigma \in \N}$ satisfies $ \langle Id\theta_1,\rho(\mu) \rangle \neq 0$. Since $\supp(\mu)\subset \{I_2=0\}$, this implies the existence of an ergodic measure $\nu \in \mathcal{M}(\phi_{H_{\epsilon}})$ with $\supp(\nu)\subset \{I_2=0\}$ such that $\langle Id\theta_1,\rho(\nu) \rangle \neq 0$. By Birkhoff's ergodic theorem $\nu$-almost every initial condition $x$ would then satisfy 
	\begin{equation}
	\label{MEQ1}
	\frac{1}{T}\int_0^T \langle Id\theta_1,X_{H_{\epsilon}} \rangle \phi_{H_{\epsilon}}^t(x)\ dt\stackrel{T\to \infty}{\longrightarrow}C 
	\end{equation}
	for some constant $C\neq 0$. However, as pointed out in Remark \ref{rem000}, if $I_2=0$ and $\sin(2\pi \theta_1(t_0))=0$ for some $t_0$ then $\sin(2\pi \theta_1(t))=0$ for all $t$. In particular we have that either $\sin(2\pi \theta_1(t))\leq 0$ for all $t$ or $\sin(2\pi \theta_1(t))\geq 0$ for all $t$. In either case $d\theta_1$ is an exact 1-form on a neighbourhood of $\{\phi_{H_{\epsilon}}^t(x)\ |\ t\in \R\}$, which contradicts (\ref{MEQ1}). This contradiction finishes the proof of the claim.
\end{proof}
Since 
\begin{align*}
\mathcal{A}_{H_{\epsilon}:\lambda}(\mu_{\varsigma})= \int H_{\epsilon}-\langle \lambda,X_{H_{\epsilon}}\rangle \ d\mu_{\varsigma}=\int -I_1I_2+\epsilon (\varphi(I_1)-I_1\varphi'(I_1))\sin(2\pi \theta_1) \ d\mu_{\varsigma},  
\end{align*}
(\ref{MEQ2}) can be written
\begin{align}
\label{MEQ3}
0=\mathcal{A}_{H_{\epsilon}:\lambda}(\mu)=& \epsilon \int (\varphi(I_1)-I_1\varphi'(I_1))\sin(2\pi \theta_1) \ d\mu. 
\end{align}
To further understand $\supp(\mu)$, note that energy-preservation (i.e. $H_{\epsilon}(x)=H_{\epsilon}(\phi_{H_{\epsilon}}^t(x))$) implies the estimate 
\begin{equation}
\label{Meq6}
|\varphi(I_1^{\varsigma}(t))\sin(2\pi \theta_1^{\varsigma}(t))-\sin(2\pi \theta_1^{\varsigma}(0))|\leq \frac{2K}{\epsilon}|I_2^{\varsigma}|.
\end{equation}
For this computation we assume that $|I_1^{\varsigma}(0)|<K$, which will be true for large enough $\varsigma$ by (\ref{equa5}) and the assumption that $|I|<K$. 

We will now use the estimates (\ref{MEQ3}) and (\ref{Meq6}) to prove Proposition \ref{propdiff}.\footnote{We thank the careful referee for suggesting the following elegant proof as a drastic simplifications of our original (admittedly rather clumsy) proof.} 
We first claim that these two estimates together imply that
\begin{equation}
\label{MEQ4}
\sin(2\pi \theta_1^{\varsigma}(0))\stackrel{\varsigma \to \infty}{\longrightarrow} 0.
\end{equation}
Indeed, if we could extract a subsequence with $\sin(2\pi \theta_1^{\varsigma}(0)) \to C\neq 0$ then (\ref{Meq6}) would imply that the functions $I_1\mapsto \varphi(I_1)$ and $\theta_1 \mapsto \sin(2\pi \theta_1)$ were bounded away from $0$ on $\supp(\mu)$, and that the latter had a constant sign (equal to $\text{Sign}(C)$) on $\supp(\mu)$. In particular (since $-I_1\varphi'(I_1)\geq 0 \ \forall \ I_1$) the function $I_1\mapsto (\varphi(I_1)-I_1\varphi'(I_1))$ would be positive and bounded away from $0$ on $\supp(\mu)$, so one would arrive at a contradiction to (\ref{MEQ3}). Together, (\ref{Meq6}) and (\ref{MEQ4}) now imply 
\begin{equation*}
\supp(\mu)\cap \{\varphi_1(I_1)>0\}\subset \{\sin(2\pi \theta_1)=0\},
\end{equation*}
which by Remark \ref{rem000} implies $\supp(\mu)\cap \{\varphi_1(I_1)>0\}=\emptyset$. Hence, Proposition \ref{propdiff} follows.

\section{Symplectic "Mather theory" in the absence of intersections}
\label{secC0}
From the point of view of the previous sections, it is tempting to say that the existence of Mather-like measures is a consequence of symplectic intersection phenomena. In this section we study invariant measures in the "absence of intersections", e.g. when every compact subset of $(M,\omega)$ can be displaced by a Hamiltonian diffeomorphism. More precisely, we develop a $C^0$-approach to studying invariant measures of (autonomous) Hamiltonian systems using ideas due to Entov-Polterovich \cite{EntovPolterovich17} and Polterovich \cite{Polterovich14}. Our main application is to the study of Hamiltonian systems on twisted cotangent bundles and $\R^{2n}$. The setup we consider here is slightly different from the one in the previous section. Here $(M,\omega)$ will denote a symplectic manifold which is either closed or geometrically bounded (see e.g. \cite{AudinLafontaine94}) and $L\subset (M,\omega)$ will denote any closed (i.e. compact without boundary) connected Lagrangian submanifold. Consider an autonomous Hamiltonian $H\in C^{\infty}(M)$ with complete flow $\phi_H=\{\phi_H^t\}_{t\in \R}$. Since $H$ is an integral of motion, sublevel sets $\Sigma_k=\Sigma_k(H):=\{H<k\}$ are invariant. As discussed previously, every $\mu \in \mathcal{M}(\Sigma_k;\phi_H)$ has a well-defined rotation vector or asymptotic cycle 
\[
\rho(\mu)\in H_1(\Sigma_k;\R)
\] 
defined by (\ref{eq10}). In the following we denote by $e(X)\in [0,\infty]$ the \emph{displacement energy} of a subset $X\subset M$ and by $e_S(X)$ the \emph{stable displacement energy} of $X$. We recall that 
\[
e(X):=\inf \left\{  || H || :\ H\in C^{\infty}_c([0,1]\times M) \ \text{ s.t.} \ \phi_H^1(\overline{X})\cap \overline{X}=\emptyset \right\},
\]
where we use the convention that the infimum over $\emptyset$ equals $\infty$ and 
\[
| | H | |:=\int_0^1 \max_M(H_t)-\min_M(H_t)\ dt
\]
denotes the Hofer norm of $H$. By definition, $e_S(X)=e(X\times S^1)$, where $S^1\subset T^*S^1$ denotes the 0-section and $X\times S^1$ is viewed as a subset of the symplectic manifold $(M\times T^*S^1,\omega \oplus \omega_0)$, with $\omega_0$ the canonical symplectic form on $T^*S^1$. Clearly, $e_S(X)\leq e(X)$ for all $X\subset M$ with strict inequality in several important examples \cite{Schlenk06}.

\begin{thm}
	\label{cor00}
	Suppose $(M,\omega)$ is weakly exact and let $H\in C^{\infty}(M)$ be proper and bounded from below. Fix an energy value $k\in \R$ such that $\Sigma_k$ is stably displaceable. Suppose $L\subset \Sigma_k \subset (M,\omega)$ is a Lagrangian with Abelian $\pi_1(L)$ and fix $\tilde{c}\in H^1(L;\R)$ such that $\partial \tilde{c}|_{\pi_2(M,L)}\equiv [\omega]|_{\pi_2(M,L)}$. Then for every $a\in H^1(\Sigma_k;\R)$ satisfying
	\begin{equation}
	\label{Meq5}
	a|_L\in (e_S(\Sigma_k),\infty)\cdot H^1(L;\Z)-\tilde{c} \quad (\subset H^1(L;\R))
	\end{equation}
	and every $\epsilon \in (0,k-\max_{L}H)$ there exists a measure $\mu \in \mathcal{M}(\Sigma_k;\phi_H)$ satisfying the two conditions
	\begin{align*}
	\langle a,\rho(\mu) \rangle &\leq \max_L H-k+\epsilon  \\
	H(\supp(\mu))&\subset [\max_L(H)+\tfrac{\epsilon}{2},k-\tfrac{\epsilon}{2}].
	\end{align*}
\end{thm}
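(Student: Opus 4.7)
The plan is to adapt the $C^0$-displacement arguments of Buhovsky-Entov-Polterovich to this setting, where the cohomological input (\ref{Meq5}) plays the role taken by Lagrangian intersections in Section \ref{MatherFloer}. First, pick $T \in (e_S(\Sigma_k),\infty)$ and $\beta \in H^1(L;\Z)$ with $a|_L + \tilde c = T\beta$, guaranteed by hypothesis. Since $\pi_1(L)$ is Abelian, $\beta$ is represented by a closed 1-form whose graph in a Weinstein neighbourhood of $L \hookrightarrow M$ defines a $T$-periodic Lagrange isotopy $\{L_t\}_{t \in \R/T\Z}$ of $L$ inside $M$. Weak exactness of $(M,\omega)$ and the compatibility $\partial \tilde c|_{\pi_2(M,L)} \equiv [\omega]|_{\pi_2(M,L)}$ are precisely what is needed to make this loop of Lagrangians \emph{exact} in the appropriate sense, so that a global Liouville primitive and a well-defined action functional exist, and $a$ appears as the associated flux class. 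A further $C^0$-small Hamiltonian perturbation keeps $L_t \subset \{\max_L H + \tfrac{\epsilon}{2} < H < k - \tfrac{\epsilon}{2}\}$ for all $t$, which is possible because $\epsilon < k - \max_L H$.

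Next I would pass to the stabilisation $(M \times T^*S^1, \omega \oplus \omega_0)$ with $S^1 = \R/T\Z$ and suspend $\{L_t\}$ to a Lagrangian $\Lambda \subset \Sigma_k \times S^1$, on which the autonomous Hamiltonian $\widehat H := H \oplus 0$ acts by the suspended flow of $\phi_H$. Because $T > e(\Sigma_k \times S^1) = e_S(\Sigma_k)$, there exists a Hamiltonian $F$ on $M \times T^*S^1$ with Hofer norm $\|F\| < T$ displacing $\Sigma_k \times S^1$, and in particular $\Lambda$. A Poisson-bracket / $C^0$-displacement estimate of the type developed in \cite{BuhovskyEntovPolterovich12} and \cite{EntovPolterovich17} then gives, for every $\delta > 0$, $\widehat H$-orbit segments of lengths $k_\varsigma T \to \infty$ that begin on $\Lambda$ and return $\delta$-close to $\Lambda$, with an action deficit controlled by $T - \|F\| > 0$.

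Projecting these orbit segments to $M$ produces $\phi_H$-orbit segments $\gamma_\varsigma$ whose endpoints lie on smooth perturbations $\psi_\varsigma(L)$ with $\psi_\varsigma \to \id$, so that the empirical measures $\mu_\varsigma$ defined by (\ref{equa12}) admit a weak$^*$-limit $\mu \in \mathcal{M}(\Sigma_k;\phi_H)$. The energy band $H(\supp \mu) \subset [\max_L H + \tfrac{\epsilon}{2}, k - \tfrac{\epsilon}{2}]$ is arranged beforehand by replacing $H$ by a function that is constant on $\{H \leq \max_L H + \tfrac{\epsilon}{2}\}$ and on $\{H \geq k - \tfrac{\epsilon}{2}\}$ while agreeing with $H$ in between; any mass of $\mu_\varsigma$ in the frozen regions contributes trivially to both the dynamics and the action, and passes to a $\phi_H$-invariant measure on $\{H \in [\max_L H + \tfrac{\epsilon}{2}, k - \tfrac{\epsilon}{2}]\}$ in the limit. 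The bound
\[
\langle a, \rho(\mu) \rangle \leq \max_L H - k + \epsilon
\]
is obtained by rewriting the symplectic action of $\gamma_\varsigma$ using the identity $a|_L + \tilde c = T\beta$: the $T\beta$-piece matches $T\langle \beta, \rho(\mu)\rangle$ up to boundary terms that vanish as $\delta \downarrow 0$, the $\tilde c$-piece combines with the Hamiltonian term to produce the energy gap $\max_L H - k$, and the remaining error is absorbed into the Hofer slack $T - \|F\|$, which can be taken arbitrarily close to $0$.

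The principal obstacle is the middle step: turning the purely cohomological hypothesis (\ref{Meq5}) into a Poisson-bracket inequality on $\widehat H$-orbit segments that is simultaneously compatible with the autonomous structure of $\widehat H$ (so that weak$^*$-limits are genuinely $\phi_H$-invariant on $\Sigma_k$) and quantitatively sharp enough to deliver the exact constant $\max_L H - k + \epsilon$ rather than a weaker qualitative bound. Keeping track of the disc-capping ambiguities permitted by weak exactness, and how they interact with the correction class $\tilde c$, is the main technical point in closing the action estimate.
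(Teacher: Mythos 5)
Your proposal diverges substantially from the paper and leaves a gap that you flag yourself. The paper derives Theorem \ref{cor00} from Theorem \ref{thm01} through a short bookkeeping step recorded in Remark \ref{ref2rem2}: set $c := -a|_L$; then (\ref{Meq5}) together with Lemma \ref{lem00} gives $\gamma_L(c) > e_S(\Sigma_k)$, and the \emph{constant} Lagrange isotopy $\psi_t = \id_L$ already witnesses $a \in \sh(\Sigma_k;L,c)$ (since $L\subset\Sigma_k$ and $\Flux_L(\psi)-c = a|_L = \psi_1^*a$), so no geometric construction of a nontrivial isotopy is needed at all. The engine of the proof is then the Moser isotopy in the proof of Theorem \ref{thm00}: a putative bound $\max_X\{F,\eta\}<\Delta$ is used to build a Lagrange isotopy carrying $L$ into $X\subset\Sigma_k$ with flux $c$, contradicting $\kappa_{H:L}(c)\geq k$, which in turn follows from Chekanov's displacement-energy bound via Proposition \ref{prop01}. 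The invariant measure and the sharp constant $\max_L H - k + \epsilon$ are then delivered by Polterovich's Theorem \ref{thm02} together with the explicit choice $\Delta = k-\max_{L}H-2\delta$.

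Your attempt to build a closed Lagrangian $\Lambda\subset M\times T^*S^1$ from a ``$T$-periodic Lagrange isotopy $\{L_t\}_{t\in\R/T\Z}$'' obtained by graphing $t$ times a closed $1$-form $\eta_\beta$ representing $\beta$ in a Weinstein neighbourhood is not well-posed: the graph of $T\eta_\beta$ never coincides with $L$ unless $T\beta=0\in H^1(L;\R)$, so the family does not close up over $\R/T\Z$ and $\Lambda$ is not defined. The stabilisation enters the paper only through Chekanov's bound on $e_S(\psi_1(L))$ inside Proposition \ref{prop01}, not via an explicit suspended Lagrangian. More fundamentally, the step you yourself flag as ``the principal obstacle'' --- converting displaceability of $\Sigma_k\times S^1$ into long orbit segments of the suspended flow with a quantitatively controlled action deficit --- is not a missing technicality but the entire content of Theorem \ref{thm02} combined with the Moser argument; without them the Hofer slack $T-\|F\|$ carries no a priori relation to the energy gap $k-\max_L H$ appearing in the conclusion. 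The one point of genuine agreement with the paper is your treatment of the energy band of $\supp(\mu)$ via a cutoff of $H$, which corresponds to the choice of the function $u$ in the proof of Theorem \ref{thm00} (see Remark \ref{rem2}).
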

In this statement, $(e_S(\Sigma_k),\infty)\cdot H^1(L;\Z)-\tilde{c}$ denotes the subset $\{t\cdot c-\tilde{c}\ |\ c\in H^1(L;\Z),\ t\in (e_S(\Sigma_k),\infty)\}$ of $H^1(L;\R)$ and $\partial :H^1(L;\R)\to H^2(M,L;\R)$ denotes the cohomological boundary map. Lemma \ref{lem00} on page \pageref{lem00} below guarantees the existence of a $\tilde{c}\in H^1(L;\R)$ such that $\partial \tilde{c}|_{\pi_2(M,L)}=[\omega]|_{\pi_2(M,L)}$ when $(M,\omega)$ is weakly exact (i.e. $\omega|_{\pi_2(M)}\equiv 0$) and $\pi_1(L)$ is Abelian.
\begin{rem}
Of course, a class $a\in H^1(\Sigma_k;\R)$ satisfying (\ref{Meq5}) might not exist. However, it does exist if the restriction map $H^1(\Sigma_k;\R)\to H^1(L;\R)$ is surjective.
\end{rem}
Theorem \ref{cor00} is in fact an application of a general existence result obtained in Theorem \ref{thm00} below. Before discussing the abstract setting, we present another consequence of Theorem \ref{thm00} (Theorem \ref{thm01}) as well as an example. To state Theorem \ref{thm01} we need

\begin{defi}
	\label{def00}
	Fix $c\in H^1(L;\R)$. The \emph{(Lagrangian) $(L,c)$-shape} of a subset $X\subset M$ is defined as the subset of $H^1(X;\R)$ consisting of those classes $a$ for which there exists a Lagrange isotopy $\psi:[0,1]\times L\to M$ starting at $L$ and ending in $X$ in the sense that $\psi_1(L)\subset X$ and which in addition satisfies the condition
	\[
	\Flux_{L}(\psi)-c=\psi_1^*a\in H^1(L;\R).
	\]
	We will denote the $(L,c)$-shape of $X$ by $\sh(X;L,c)\subset H^1(X;\R)$. By the \emph{$L$-shape of $X$} we simply mean the $(L,0)$-shape of $X$ and use the shorthand notation $\sh(X;L):=\sh(X;L,0)$. 
\end{defi}
Of course it might be that $\sh(X;L,c)=\emptyset$.
\begin{rem}
	The notion of \emph{symplectic shape} was introduced by Sikorav \cite{Sikorav89} \cite{Sikorav91} and later studied by Eliashberg \cite{Eliashberg91}. The above definition is "modelled" on this classical concept, but also makes sense for closed symplectic manifolds. In case $(T^*N,\omega=d\lambda)$ is a cotangent bundle, our definition of $(N,c)$-shape of $X\subset T^*N$ should be thought of as corresponding to the shape of $X$ in the sense of \cite{Eliashberg91} computed with respect to the primitive $\lambda-\eta_c$ of $\omega$, where $\eta_c$ is a closed $1$-form representing $c\in H^1(N;\R)=H^1(T^*N;\R)$.
\end{rem}

For the next result we will use the following notation: Given our closed Lagrangian $L\subset (M,\omega)$ and a class $c\in H^1(L;\R)$, we denote by $\gamma_L(c)$ the positive generator of the subgroup
\begin{equation}
\label{eqrev2eq1}
\mathcal{G}_L(c):=\langle [\omega]-\partial c, \pi_2(M,L)\rangle \leq \R, 
\end{equation}
if this subgroup is discrete and non-trivial. If $\mathcal{G}_L(c)$ is trivial (i.e. $\mathcal{G}_L(c)=\{0\}$) we set $\gamma_L(c)=+\infty$ and we set $\gamma_L(c)=0$ if $\mathcal{G}_L(c)$ is \emph{not} discrete. We say that $L$ is \emph{rational} if $\gamma_L(0)\in (0,\infty)$.
\begin{thm}
\label{thm01}
	Let $L\subset (M,\omega)$ be a closed Lagrangian and let $H\in C^{\infty}(M)$ be proper and bounded from below. Suppose $k\in \R$ is an energy value and $c\in H^1(L;\R)$ such that 
	\begin{equation}
	\label{ref2eq2}
	e_S(\Sigma_k)<\gamma_L(c).
	\end{equation}
	Then for every $a\in \sh(\Sigma_k;L,c)\subset H^1(\Sigma_k;\R)$ there exists an ergodic $\mu \in \mathcal{M}(\Sigma_k;\phi_H)$ which satisfies
	\begin{equation}
	\label{eq05}
	\langle a,\rho(\mu) \rangle <0.
	\end{equation}
\end{thm}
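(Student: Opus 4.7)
The plan is to deduce Theorem \ref{thm01} as a special case of the general existence result Theorem \ref{thm00}. The hypothesis packages three ingredients: (i) a geometric datum, the class $a$ in the $(L,c)$-shape of $\Sigma_k$; (ii) a dynamical datum, properness of $H$ and invariance of $\Sigma_k$; and (iii) a quantitative datum, the strict inequality $e_S(\Sigma_k)<\gamma_L(c)$ separating stable displacement energy from the rationality threshold. Theorem \ref{thm00} is presumably built to take exactly (i)--(iii) as input and produce an invariant measure with a controlled rotation vector.

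The first step is to unpack Definition \ref{def00}: by hypothesis there exists a Lagrange isotopy $\psi:[0,1]\times L\to M$ starting at $L$, with $\psi_1(L)\subset \Sigma_k$, satisfying
\[
\Flux_L(\psi)-c=\psi_1^*a\in H^1(L;\R).
\]
The second step is to feed $\psi$, together with $H$ and $k$, into Theorem \ref{thm00}. That theorem, following the Buhovsky--Entov--Polterovich template, should produce a measure $\mu\in\mathcal{M}(\Sigma_k;\phi_H)$ such that the rotation vector, tested against the flux-minus-shift class, is strictly negative:
\[
\langle \Flux_L(\psi)-c,\,(\psi_1)_*\rho_{\mathrm{loc}}(\mu)\rangle<0,
\]
where $\rho_{\mathrm{loc}}(\mu)\in H_1(\Sigma_k;\R)$ is the asymptotic cycle. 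The gap $\gamma_L(c)-e_S(\Sigma_k)>0$ is what drives strictness, because it provides the spectral margin needed for the Floer/quasi-state argument at the heart of Theorem \ref{thm00} (where the stabilization $\Sigma_k\times S^1\hookrightarrow M\times T^*S^1$ is used so that stable displacement energy, rather than ordinary displacement energy, controls the relevant invariant).

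The third step is a straightforward translation: by adjunction between pullback and pushforward,
\[
\langle \Flux_L(\psi)-c,\,(\psi_1)_*\rho_{\mathrm{loc}}(\mu)\rangle
=\langle \psi_1^*a,\,(\psi_1)_*\rho_{\mathrm{loc}}(\mu)\rangle
=\langle a,\,\rho_{\mathrm{loc}}(\mu)\rangle,
\]
where in the last equality $(\psi_1)_*$ gets absorbed because $\rho_{\mathrm{loc}}(\mu)$ already lies in $H_1(\Sigma_k;\R)$ (the purpose of the intermediate $(\psi_1)_*$ is only notational). Since the inclusion $\Sigma_k\hookrightarrow M$ pushes $\rho_{\mathrm{loc}}(\mu)$ to $\rho(\mu)$, one obtains $\langle a,\rho(\mu)\rangle<0$. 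Finally, upgrade to an ergodic measure by ergodic decomposition: write $\mu=\int\mu_x\,d\mu(x)$ with $\mu_x$ ergodic and $\phi_H$-invariant; by weak-$*$ continuity and linearity of $\rho$ one has $\int\langle a,\rho(\mu_x)\rangle\,d\mu(x)=\langle a,\rho(\mu)\rangle<0$, so some ergodic component $\mu_x$ satisfies \eqref{eq05}. Its support sits inside $\supp(\mu)\subset\Sigma_k$.

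The main obstacle is not in this reduction, which is essentially formal, but in Theorem \ref{thm00} itself. I expect its proof to combine a Lagrangian spectral-invariant/partial-quasi-state construction along the path of Lagrangians $\psi_t(L)$ with the stabilization trick responsible for the appearance of $e_S$ rather than $e$; the rationality constant $\gamma_L(c)$ will enter as the spacing of the action spectrum of the isotopic family of Lagrangians relative to the twisted primitive $\lambda$--$\eta_c$, so that a displacement whose Hofer energy is strictly below $\gamma_L(c)$ cannot cancel the spectral invariant, forcing a nontrivial intersection/action-minimizing orbit and, through a Birkhoff-type averaging argument as in Remark \ref{rem101}, an invariant measure with the asserted negative pairing against $a$.
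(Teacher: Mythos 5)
Your overall reduction---Theorem \ref{thm01} as a corollary of Theorem \ref{thm00}---is correct and is exactly the paper's route, but you have misplaced where the quantitative hypothesis $e_S(\Sigma_k)<\gamma_L(c)$ actually does its work, and as a result you never verify the hypothesis Theorem \ref{thm00} requires. Theorem \ref{thm00} takes as input $k\leq \kappa_{H:L}(c)$, not the displacement-energy inequality. Your step two ``feeds $\psi$, $H$, $k$ into Theorem \ref{thm00}'' without checking $k\leq\kappa_{H:L}(c)$, and your commentary that the gap $\gamma_L(c)-e_S(\Sigma_k)>0$ ``provides the spectral margin needed for the Floer/quasi-state argument at the heart of Theorem \ref{thm00}'' places the stabilization and displacement-energy reasoning inside Theorem \ref{thm00}, where it does not in fact occur. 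In the paper that reasoning lives entirely in the separate Proposition \ref{prop01}: one shows that for any Lagrange isotopy $\psi$ starting at $L$ with $\Flux_L(\psi)=c$, the image $\psi_1(L)$ has period group $\mathcal{G}_L(c)$ by (\ref{eq33}), so by Polterovich--Chekanov $\gamma_L(c)\leq e_S(\psi_1(L))$; combined with $e_S(\Sigma_k)<\gamma_L(c)$ this forces $\psi_1(L)\not\subset\Sigma_k$, hence $\max_{\psi_1(L)}H\geq k$, i.e.\ $\kappa_{H:L}(c)\geq k$. That is the missing link in your argument.

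Two further, smaller points. Your ``third step'' (the pairing translation through $\psi_1^*$ and $(\psi_1)_*$) is unnecessary: Theorem \ref{thm00} already delivers the conclusion $\langle a,\rho(\mu)\rangle<0$ directly in $H^1(\Sigma_k;\R)\times H_1(\Sigma_k;\R)$, with $a$ as stated. And your final ergodic-decomposition step is redundant: Theorem \ref{thm00} already asserts that the measure can be taken ergodic (the paper achieves this via the Biran--Polterovich--Salamon lemma applied to the affine function $\nu\mapsto\langle a,\rho(\nu)\rangle$). Once you insert Proposition \ref{prop01} (or an equivalent argument) to supply $k\leq\kappa_{H:L}(c)$, the proof collapses to the paper's one-line deduction.
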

Again, this is in fact a corollary of a more general result given below. 
\begin{rem}
	\label{ref2rem1}
	Note that if $L$ is rational and $e_S(\Sigma_k)<\gamma_L(0)$, then (\ref{ref2eq2}) holds for every element $c$ in the lattice $\gamma_L(0)\cdot H^1(L;\Z) \leq H^1(L;\R)$:
	\[
	\mathcal{G}_L(c)=\langle [\omega]-\partial c,\pi_2(M,L)\rangle \subset \gamma_L(0)\cdot \Z-\langle c,\pi_1(L)\rangle \subset \gamma_L(0)\cdot \Z.
	\]
\end{rem}

\begin{rem}
	Theorem \ref{thm01} deduces information about a Hamiltonian system using Lagrange isotopies which \emph{cannot} be realized by globally defined symplectic isotopies. This should be compared with information about invariant measures coming from Lagrange isotopies induced by globally defined symplectic isotopies as in \cite{Polterovich14}. More precisely, the Lagrange isotopies $\psi$ which are the source of information in Theorem \ref{thm01} are those for which $\Flux_L(\psi)$ does \emph{not} lie in the image of the restriction map $H^1(M;\R) \to H^1(L;\R)$. To see this, note that, by the main result in Chekanov's beautiful paper \cite{Chekanov98}, no stable Lagrange isotopy with flux in the image of $H^1(M \times S^1;\R)\to H^1(L \times S^1;R)$ can take $L\times S^1$ into $\Sigma_k \times S^1$ if $e_S(\Sigma_k)<\gamma(L)$.
\end{rem}

\begin{rem}
	\label{ref2rem2}
	Let's explain how Theorem \ref{cor00} follows from Theorem \ref{thm01}. Consider an energy level $k\in \R$ such that $\Sigma_k$ is stably displaceable as well as a $\tilde{c}\in H^1(\Sigma_k;\R)$ such that $[\omega]|_{\pi_2(M,L)}=\partial \tilde{c}|_{\pi_2(M,L)}$.\footnote{By Lemma \ref{lem00} on page \pageref{lem00} such a $\tilde{c}$ exists if $(M,\omega)$ is weakly exact and $\pi_1(L)$ is Abelian.} If $a\in H^1(\Sigma_k;\R)$ satisfies (\ref{Meq5}) then $a|_L\in H^1(L;\R)$ can be written $a|_L=Tc'-\tilde{c}$ with $T\in (e_S(\Sigma_k),\infty)$ and $c'\in H^1(L;\Z)$. In particular 
	\[
	\mathcal{G}_L(-a|_L)=\langle [\omega]+\partial(Tc'-\tilde{c}),\pi_2(M,L)\rangle \subset T \langle c',\pi_1(L)\rangle \subset T\cdot \Z,
	\]
	so $\gamma_L(-a|_L)>e_S(\Sigma_k)$. Moreover, since $L\subset \Sigma_k$, we have $a \in \sh(\Sigma_k;L,-a|_L)$. To see this, consider the constant Lagrange isotopy $\psi:[0,1]\times L \to M$ given by $\psi_t(x)=x$. Then $\psi_1(L)=L\subset \Sigma_k$ and $\Flux_L(\psi)=0$, so
	\[
	\Flux_L(\psi)-(-a|_L)=a|_L=\psi_1^*a
	\]
	and $a \in \sh(\Sigma_k;L,-a|_L)$ as claimed. Hence, by Theorem \ref{thm01} there exists a $\mu \in \mathcal{M}(\Sigma_k;\phi_H)$ such that 
	\[
	\langle a, \rho(\mu) \rangle <0.
	\]
	This is essentially the statement of Theorem \ref{cor00} (the additional estimates are a consequence of Remark \ref{rem2} below).
\end{rem}

\begin{ex}
	\label{ex01}
	Let's illustrate the phenomenon captured in Theorem \ref{thm01} by a very simple example. Denote by $h:[0,\infty)\to \R$ the function whose graph is illustrated in Figure \ref{fig01} and consider the Hamiltonian $H(x,y)=h(\sqrt{x^2+y^2})$ on the plane. The symplectic structure we use on $\R^2(x,y)$ is $dx \wedge dy$.
	\begin{figure}[h]
		\centering
		\begin{tikzpicture}[scale=1]
\draw [thick, ->] (0,0) -- (4.5,0);
\draw [thick, ->] (0,0) -- (0,4);
\node [right] at (4.5,0) {\small $[0,\infty)$};
\node [left] at (0,2) {\small $2$};
\node [left] at (0,1) {\small $1$};
\node [below] at (3,0) {\small $3$};
\node [below] at (4,0) {\small $4$};
\node [below] at (1,0) {\small $1$};
\node [below] at (2,0) {\small $2$};
\node [right] at (3,1) {\small $\graph(h)$};
\node [left] at (0,3) {\small $\R$};
\draw [thick] (9,2) circle (2);
\node [right] at (10,0) {\small $\Sigma_{3/2}$};
\draw [thick] (9,2) circle (0.2);
\draw [red, dashed] (9,2) circle (1.15);
\draw [->] (9.5,2) arc (0:50:0.5cm);
\node [right] at (9.4,2.1) {\small $a$};
\draw [->] (10.5,2) arc (0:-20:1.3cm);
\node [right] at (10.4,1.9) {\small $b$};

\draw[scale=1,domain=1:4,smooth,variable=\x,black]  plot ({\x},{(\x-2)*(\x-2)});
\draw[scale=1,domain=0:1,smooth,variable=\x,black]  plot ({\x},{2-\x^2});

\end{tikzpicture}
		\caption{On the left $\graph(h)$ is indicated. On the right the sublevel set $\Sigma_{3/2}$ is indicated. The red dashed line indicates the level set $\{H=0\}$ which consists of fixed points for $\phi_H$. The arrows $a$ and $b$ indicate the direction of $\phi_H$-flowlines.}
		\label{fig01}
	\end{figure}
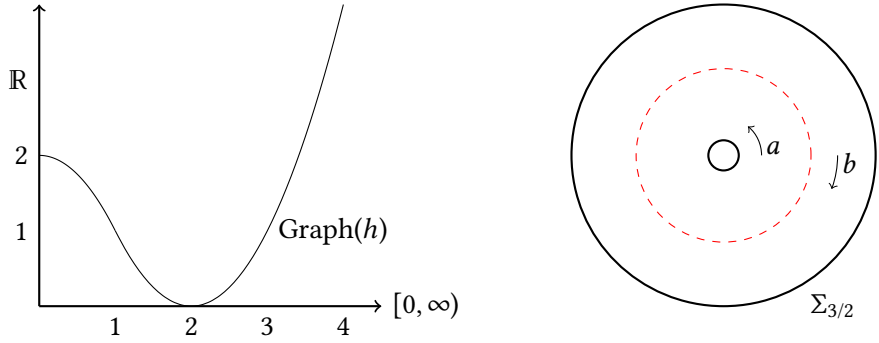
	The sublevel set $\Sigma_{3/2}$ is an annulus containing periodic orbits of $\phi_H$ which represent both positive and negative classes in $H_1(\Sigma_{3/2};\R)\cong \R$. This is exactly what our theory detects: We choose the Lagrangian $L:=\{x^2+y^2=16\}$, so that $e_S(\Sigma_{3/2})\leq e(\Sigma_{3/2})<16\pi=\gamma(L)$. Denote by $d\varphi$ the angle 1-form on $L$ (so that $\int_{l}d\varphi=2\pi$ for $l(t)=(4\cos(t),4\sin(t)),\ t\in [0,2\pi]$). According to (\ref{Meq4}), the Lagrange isotopy $\psi$ which "shrinks" $L$ to the Lagrangian $\{x^2+y^2=4\}=\{H=0\}\subset \Sigma_{3/2}$ satisfies
	\[
	\langle \Flux_{L}(\psi),[l] \rangle =16\pi-4\pi=12\pi, 
	\]
	so $\Flux_{L}(\psi)=6[d\varphi]\in H^1(L;\R)$. Hence, $6[d\varphi]\in \sh(\Sigma_{3/2};L)\subset H^1(\Sigma_{3/2};\R)$ and Theorem \ref{thm01} guarantees the existence of a $\mu \in \mathcal{M}(\Sigma_{3/2};\phi_H)$ such that 
	\[
	\langle 6[d\varphi],\rho(\mu) \rangle <0 \Leftrightarrow  \langle [d\varphi],\rho(\mu) \rangle <0,
	\]
	I.e. the motion detected by $\mu$ is that which is indicated by arrow $b$ in Figure \ref{fig01}. 
	Moreover, $8[d\varphi]\in \gamma(L)\cdot H^1(L;\Z)$ and
	\[
	-2[d\varphi]=\Flux_L(\psi)-8[d\varphi]\in \sh(\Sigma;L,8[d\varphi]),
	\]
	so Theorem \ref{thm01} also detects the existence of a $\nu \in \mathcal{M}(\Sigma_{3/2};\phi_H)$ such that 
	\[
	-\langle 2[d\varphi],\rho(\nu) \rangle <0 \Leftrightarrow \langle [d\varphi],\rho(\nu) \rangle >0.
	\] 
	I.e. the motion detected by $\nu$ is that which is indicated by arrow $a$ in Figure \ref{fig01}. For more sophisticated applications of the above results, see Section \ref{secexamples}. 
\end{ex}

The theorems above are in fact corollaries of a general theorem based on the study of a $\kappa$-function which we now define. Given a Hamiltonian $H\in C^{\infty}(M)$ which is bounded from below we associate to it a function $\kappa_{H:L}:H^1(L;\R)\to \R \cup \{+\infty \}$ defined by
\[
\kappa_{H:L}(c):=\inf_{\psi}\max_{\psi_1(L)}(H), \quad c\in H^1(L;\R),
\]
where the infimum runs over all Lagrange isotopies $\psi:[0,1]\times L\to M$ starting at $L$ and satisfying $\Flux_{L}(\psi)=c$. Again we use the convention that the infimum over $\emptyset$ equals $+\infty$. The motivation for this function comes from Aubry-Mather theory where it was discovered by Contreras-Iturriaga-Paternain \cite[Corollary 1]{ContrerasIturriagaPaternain98} that the Mather $\alpha$-function, associated to a convex Hamiltonian on the cotangent bundle of a closed manifold, can be defined in a similar way. 

\begin{thm}
	\label{thm00}
	Consider an autonomous Hamiltonian $H\in C^{\infty}(M;\R)$ which is proper and bounded from below. Let $c\in H^1(L;\R)$ and suppose $k\in \R$ is an energy value such that $k\leq \kappa_{H:L}(c)$. Then for every
	\[
	a\in \sh(\Sigma_k;L,c)
	\]
	there exists an ergodic $\mu \in \mathcal{M}(\Sigma_k;\phi_H)$ which satisfies 
	\begin{equation}
		\label{eq06}
		\langle a,\rho(\mu) \rangle <0.
	\end{equation}
\end{thm}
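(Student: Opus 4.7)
The plan is to argue by contradiction. Assuming that every ergodic $\mu \in \mathcal{M}(\Sigma_k;\phi_H)$ satisfies $\langle a, \rho(\mu) \rangle \geq 0$, I will build a Lagrange isotopy $\Psi : [0,1] \times L \to M$ starting at $L$ with $\Psi_1(L) \subset \Sigma_k$, $\max_{\Psi_1(L)} H < k$, and $\Flux_L(\Psi) = c$, contradicting $k \leq \kappa_{H:L}(c)$. Properness of $H$ together with the lower bound makes $\overline{\Sigma_k} = \{H \leq k\}$ compact and $\phi_H$-invariant, so the standard ergodic theory of the flow $\phi_H|_{\overline{\Sigma_k}}$ is available; ergodic decomposition upgrades the contradiction hypothesis to $\langle a, \rho(\mu)\rangle \geq 0$ for every $\mu \in \mathcal{M}(\overline{\Sigma_k};\phi_H)$.

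Fix a closed one-form $\eta \in \Omega^1(\Sigma_k)$ representing $a$; the hypothesis then reads $\int \eta(X_H)\, d\mu \geq 0$ for every invariant probability $\mu$ on $\overline{\Sigma_k}$. By a standard Hahn-Banach / Ma{\~n}{\'e}-Fathi duality for continuous flows on compact spaces (the dual cone of invariant measures in $C^0$ is the closed convex cone generated by non-negative functions and Lie derivatives $X_H(g)$), for each $\epsilon > 0$ there exists $g \in C^\infty$ on (a neighborhood of) $\overline{\Sigma_k}$ such that
\[
\tilde{\eta}(X_H) \geq -\epsilon \quad \text{on } \overline{\Sigma_k},
\]
where $\tilde{\eta} := \eta + dg$ is another closed one-form representing $a$.

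Let $Y$ be the symplectic vector field on $\Sigma_k$ characterized by $\iota_Y \omega = \tilde{\eta}$ (symplectic because $\tilde{\eta}$ is closed). Combining $\iota_{X_H}\omega = -dH$ with $\iota_Y\omega = \tilde{\eta}$ gives $dH(Y) = \tilde{\eta}(X_H) \geq -\epsilon$, so along the flow $\psi_{-Y}^t$ the value of $H$ grows at rate at most $\epsilon$. Choose a witness $\psi$ for $a \in \sh(\Sigma_k;L,c)$, so $\psi_1(L) \subset \Sigma_k$ and $\Flux_L(\psi) = c + \psi_1^*a$, and pick $\epsilon \in (0, k - \max_{\psi_1(L)} H)$. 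Then the Lagrange isotopy $\chi_t := \psi_{-Y}^t|_{\psi_1(L)}$, $t \in [0,1]$, remains inside the compact sublevel $\{H \leq \max_{\psi_1(L)} H + \epsilon\} \subset \Sigma_k$. A direct cylinder computation, using that $\psi_{-Y}^s$ is isotopic to the identity in $\Sigma_k$ and hence preserves homology classes of loops, yields $\Flux_{\psi_1(L)}(\chi_t) = -t \cdot a|_{\psi_1(L)}$. The concatenation $\Psi$ of $\psi$ and $\chi$ therefore ends in $\Sigma_k$ with $\max_{\Psi_1(L)} H < k$, and by additivity of Lagrangian flux under concatenation
\[
\Flux_L(\Psi) = \Flux_L(\psi) + \psi_1^*\bigl(\Flux_{\psi_1(L)}(\chi)\bigr) = (c + \psi_1^*a) - \psi_1^*a = c,
\]
yielding $\kappa_{H:L}(c) \leq \max_{\Psi_1(L)} H < k$, the desired contradiction.

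The main technical obstacle is the Ma{\~n}{\'e}-Fathi step: one has to produce a \emph{smooth} subsolution $g$ (so that $dg$, hence $Y$, makes sense), and one must bridge the small gap between invariant measures on the open $\Sigma_k$ and those on the compact $\overline{\Sigma_k}$, since measures supported on $\{H = k\}$ lie outside $\mathcal{M}(\Sigma_k;\phi_H)$ and could a priori violate the sign assumption. Both issues are handled by a slight shrinking of $k$ (replacing $k$ by $k' \in (\max_{\psi_1(L)} H, k)$) together with mollification. Ergodicity of the $\mu$ in the conclusion is automatic, since by ergodic decomposition the existence of an invariant measure with $\langle a, \rho\rangle < 0$ forces the existence of an ergodic one.
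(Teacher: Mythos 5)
Your proof is correct and takes a genuinely different route from the paper's. The paper invokes Polterovich's Poisson-bracket-to-measure theorem (Theorem \ref{thm02}) as a black box, reduces to the inequality $pb^+_{H,\Sigma_k}(-a;X)\geq\Delta$, and proves that by contradiction using the full Moser trick with the deformed symplectic form $\omega_s=\omega-s\,dF\wedge\eta$, where the cutoff $F$ is what confines the isotopy to $X$. You bypass both the $pb^+$ machinery and the Moser deformation: the Hahn--Banach/Ma\~n\'e--Fathi bipolar argument converts the contradiction hypothesis directly into a closed representative $\tilde\eta$ of $a$ with $\tilde\eta(X_H)\geq-\epsilon$ on a compact sublevel, and then, crucially, because $\tilde\eta$ is \emph{closed}, the vector field $Y$ with $\iota_Y\omega=\tilde\eta$ is already symplectic, so its time-independent flow preserves Lagrangians with no need for the $\omega_s$-trick; confinement to $\{H<k\}$ comes from the Gronwall estimate $\tfrac{d}{dt}H\leq\epsilon$ instead of from a compactly supported vector field. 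The two technical points you flag (smooth subsolutions via mollification, and the gap between $\mathcal{M}(\Sigma_k;\phi_H)$ and $\mathcal{M}(\overline{\Sigma_k};\phi_H)$ fixed by shrinking $k$) are real but handled correctly, and the flux identity $\Flux_{\psi_1(L)}(\chi)=-a|_{\psi_1(L)}$ matches the sign convention the paper itself uses in its proof (Solomon's convention $\Flux_{L_0}(\phi|_{L_0})=\int_0^1[\phi_t^*\iota_{\dot\phi_t}\omega]\,dt$), so the bookkeeping closes. The trade-off: the paper's route through Theorem \ref{thm02} delivers for free the quantitative bounds of Remark \ref{rem2} on $\langle a,\rho(\mu)\rangle$ and on $H(\supp(\mu))$, which a pure contradiction argument does not immediately produce, though one could likely recover them by running your Ma\~n\'e--Fathi estimate with a sharper threshold than $0$.
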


\begin{rem}
	\label{rem2}
	In fact the proof of Theorem \ref{thm00} (presented in Section \ref{secproofthm00}) provides information both about $\langle a,\rho(\mu) \rangle$ as well as $\supp(\mu)$. More precisely, if $\psi:[0,1]\times L\to (M,\omega)$ is a Lagrange isotopy with $\Flux_{L}(\psi)-c=\psi_1^*a$ and $\psi_1(L)\subset \Sigma_k$ then for a given $\epsilon>0$ one can find an ergodic $\mu \in \mathcal{M}(\phi_H)$ achieving  
	\begin{align*}
		\langle a,\rho(\mu) \rangle &\leq \max_{\psi_1(L)}(H)-k+\epsilon \\
		 H(\supp(\mu))&\subset [\max_{\psi_1(L)}(H)+\tfrac{\epsilon}{2},k-\tfrac{\epsilon}{2}].
	\end{align*}
\end{rem}

The proof of Theorem \ref{thm00} relies heavily on beautiful ideas due to Buhovsky-Entov-Polterovich \cite{BuhovskyEntovPolterovich12}, Entov-Polterovich \cite{EntovPolterovich17} and Polterovich \cite{Polterovich14}. Of course this result is only useful if we can find reasonable lower bounds for $\kappa_{H:L}$. The main source of such estimates come from rigidity results in symplectic topology, which is where $\mathcal{G}_L(c)$ from (\ref{eqrev2eq1}) comes into play. This happens for the following reason: If $\psi:[0,1]\times L\to M$ is a Lagrange isotopy starting at $L$ with $\Flux_L(\psi)=c\in H^1(L;\R)$, then the period group of $\psi_1(L)\subset (M,\omega)$ is exactly $\mathcal{G}_L(c)$:
\begin{equation}
	\label{eq33}
\langle [\omega], \pi_2(M,\psi_1(L))\rangle =\mathcal{G}_L(c).
\end{equation}

The following Proposition was the original idea employed in \cite{EntovPolterovich17}. For more quantitative versions of this idea, see below.
\begin{prop}[\cite{EntovPolterovich17}]
	Suppose $c\in H^1(L;\R)$ satisfies $\partial c|_{\pi_2(M,L)}=[\omega]|_{\pi_2(M,L)}$ and suppose that $(M,\omega)$ does not admit any weakly exact Lagrangian submanifolds. Then $\kappa_{H,L}(c)=+\infty$ for all Hamiltonians $H$.
\end{prop}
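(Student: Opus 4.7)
The proof should be essentially a one-line application of the formula (\ref{eq33}) combined with the two hypotheses. Here is how I would lay it out.

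The plan is to show that the set of Lagrange isotopies $\psi:[0,1]\times L\to M$ starting at $L$ with $\Flux_L(\psi)=c$ is empty; since $\kappa_{H,L}(c)$ is defined as an infimum over such isotopies and the infimum over the empty set is $+\infty$ by convention, this will immediately yield the conclusion for every $H$.

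Suppose for contradiction that such a $\psi$ exists and consider $L':=\psi_1(L)\subset (M,\omega)$, which is a closed Lagrangian submanifold. By (\ref{eq33}) the period group of $L'$ is
\[
\langle [\omega], \pi_2(M,L')\rangle = \mathcal{G}_L(c) = \langle [\omega]-\partial c,\pi_2(M,L)\rangle.
\]
The hypothesis $\partial c|_{\pi_2(M,L)}=[\omega]|_{\pi_2(M,L)}$ forces the right-hand side to be the trivial subgroup of $\R$, i.e.\ $[\omega]|_{\pi_2(M,L')}\equiv 0$. Hence $L'$ is a weakly exact Lagrangian submanifold of $(M,\omega)$, contradicting the standing assumption that $(M,\omega)$ admits none. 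Therefore no such $\psi$ exists and $\kappa_{H,L}(c)=+\infty$.

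The only "step" that is not purely formal is the invocation of (\ref{eq33}), which identifies the period group of the time-$1$ image of a Lagrange isotopy in terms of its flux; this is cited from the preceding discussion and will have been verified there. Given (\ref{eq33}), there is no real obstacle: the argument reduces to reading off $\mathcal{G}_L(c)=0$ from the compatibility hypothesis on $c$ and invoking the nonexistence of weakly exact Lagrangians to empty the index set of the infimum.
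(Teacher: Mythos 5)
Your proof is correct and follows exactly the same route as the paper: both deduce from (\ref{eq33}) and the hypothesis on $c$ that any Lagrange isotopy with flux $c$ would produce a weakly exact Lagrangian, so no such isotopy exists and $\kappa_{H,L}(c)=+\infty$ vacuously. You have simply unpacked the two-sentence argument the paper gives.
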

\begin{proof}
	There exists no Lagrange isotopy $\psi$ starting at $L$ with $\Flux_L(\psi)=c$. This follows from (\ref{eq33}) and our assumption.
\end{proof}

This proposition can be refined in several ways. Suppose $(M,N, \omega)$ is a \emph{subcritical polarized K{\"a}hler manifolds} in the sense of \cite{BiranCieliebak01}. I.e. $(M,\omega)$ is a closed K{\"a}hler manifold and $N\subset M$ is a subset such that $(M\backslash N,\omega)$ admits the structure of a subcritical Stein manifold. In this situation we have the following estimate of $\kappa_{H:L}$.
\begin{prop}
	Let $(M,N, \omega)$ be as above and suppose $H\in C^{\infty}(M)$. If $L\subset (M,\omega)$ is a closed Lagrangian and $c\in H^1(L;\R)$ satisfies the condition that $\partial c|_{\pi_2(M,L)}=[\omega]|_{\pi_2(M,L)}$ then
	\begin{equation*}
	\kappa_{H:L}(c)\geq \min_{N}H.
	\end{equation*}
\end{prop}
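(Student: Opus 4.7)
The plan is to argue by contradiction. Suppose $\kappa_{H:L}(c)<\min_N H$. By definition of $\kappa_{H:L}$ as an infimum, I may choose a Lagrange isotopy $\psi:[0,1]\times L\to M$ starting at $L$, with $\Flux_L(\psi)=c$, satisfying
\[
\max_{\psi_1(L)} H<\min_N H.
\]
Setting $L':=\psi_1(L)$, this strict inequality forces $L'\subset\{H<\min_N H\}\subset M\setminus N$.

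Next I would combine the hypothesis on $c$ with identity (\ref{eq33}) to show that $L'$ is weakly exact in $M$. The assumption $\partial c|_{\pi_2(M,L)}=[\omega]|_{\pi_2(M,L)}$ is precisely the statement that the subgroup $\mathcal{G}_L(c)$ defined in (\ref{eqrev2eq1}) is trivial; then (\ref{eq33}) yields $\langle[\omega],\pi_2(M,L')\rangle=0$. Since every disk in $M\setminus N$ with boundary on $L'$ is in particular such a disk in $M$, $L'$ is a fortiori weakly exact when viewed as a closed Lagrangian in the subcritical Stein manifold $M\setminus N$.

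The proof would then conclude by invoking two classical rigidity inputs. First, by the theory of subcritical Stein manifolds developed by Biran--Cieliebak \cite{BiranCieliebak01}, every closed Lagrangian submanifold of $M\setminus N$ is displaceable by a Hamiltonian diffeomorphism compactly supported in $M\setminus N$; in particular $L'$ is so displaceable. Second, a closed weakly exact Lagrangian cannot be displaced in this way: such a displacement would, via Gromov's compactness argument applied to a one-parameter family of pseudoholomorphic strips, produce a non-constant pseudoholomorphic disk with boundary on $L'$ whose symplectic area is bounded above by the Hofer norm of the displacing Hamiltonian and below by a positive constant, contradicting weak exactness (cf.\ \cite{Chekanov98}). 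Together these two statements contradict the existence of $L'$, completing the proof.

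The main obstacle I anticipate is not conceptual but rather one of careful bookkeeping: one must translate the hypothesis $\partial c|_{\pi_2(M,L)}=[\omega]|_{\pi_2(M,L)}$ into weak exactness of $\psi_1(L)$ in $M$ via (\ref{eq33}), and verify that the resulting Lagrangian is indeed a closed submanifold of $M\setminus N$ to which the Biran--Cieliebak displaceability result applies. Once this translation is made, the conclusion is a direct consequence of the well-documented rigidity of weakly exact Lagrangians inside subcritical Stein manifolds.
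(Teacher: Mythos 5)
Your proof is correct and follows essentially the same strategy as the paper: translate $\partial c|_{\pi_2(M,L)}=[\omega]|_{\pi_2(M,L)}$ into $\mathcal{G}_L(c)=\{0\}$, use (\ref{eq33}) to deduce weak exactness of $\psi_1(L)$, then derive a contradiction between displaceability in the subcritical Stein complement $M\setminus N$ (Biran--Cieliebak) and non-displaceability of a weakly exact closed Lagrangian (the paper cites \cite{Polterovich93}, you cite \cite{Chekanov98}; either is fine). The only cosmetic difference is that the paper shows directly that \emph{every} admissible $\psi$ has $\psi_1(L)\cap N\neq\emptyset$, while you pick a minimizing $\psi$ violating the bound; these are logically equivalent framings.
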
  
\begin{proof}
	Suppose $\psi:[0,1]\times L \to (M,\omega)$ is a smooth Lagrange isotopy starting at $L$ with $\Flux_L(\psi)=c\in H^1(L;\R)$. It suffices to show that $\psi_1(L)\cap N\neq \emptyset$. Suppose for contradiction that $\psi_1(L)\subset (M \backslash N,\omega)$. Since $(M \backslash N,\omega)$ is subcritical Stein it follows from \cite[Lemma 3.2]{BiranCieliebak02} that $\psi_1(L)$ can be displaced by a compactly supported Hamiltonian diffeomorphism.  However, $\mathcal{G}_L(c)=\{0\}$, which means that $\psi_1(L)$ is weakly exact, so it cannot be displaced by a Hamiltonian diffeomorphism \cite{Polterovich93}. This finishes the proof.
\end{proof}
The following Proposition connects Theorem \ref{thm00} to the main result in \cite{Polterovich14}.
\begin{prop}
	Suppose that $M$ is closed, that the restriction map $r_L:H^1(M;\R)\to H^1(L;\R)$ is surjective and that $L$ is non-displaceable. I.e. $\phi(L)\cap L\neq \emptyset$ for all $\phi \in \Ham(M,\omega)$. Then 
	\begin{equation}
	\label{eq34}
	\kappa_{H:L}(0)\geq \min_L H \quad \forall \ H\in C^{\infty}(M).
	\end{equation}
\end{prop}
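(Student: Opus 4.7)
The plan is to unwind the definition of $\kappa_{H:L}(0)$ and reduce the inequality to showing that every Lagrange isotopy $\psi:[0,1]\times L\to M$ starting at $L$ with $\Flux_{L}(\psi)=0$ has the property that $\psi_{1}(L)\cap L\neq \emptyset$. Once we have this intersection, any point $x\in \psi_{1}(L)\cap L$ satisfies $H(x)\geq \min_{L}H$, hence $\max_{\psi_{1}(L)}H\geq \min_{L}H$, and taking the infimum over all such $\psi$ yields (\ref{eq34}).

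The core of the argument is therefore to upgrade a zero-flux Lagrange isotopy into a Hamiltonian one. This is where the surjectivity of $r_{L}:H^{1}(M;\R)\to H^{1}(L;\R)$ enters crucially: I would invoke the result of Solomon \cite{Solomon13} asserting that, under this surjectivity assumption, a Lagrange isotopy $\psi$ of a closed Lagrangian in a closed symplectic manifold extends to a Hamiltonian isotopy of $(M,\omega)$ if and only if $\Flux_{L}(\psi)=0$. Concretely, since $M$ is closed, the Lagrangian isotopy extension theorem first produces an ambient symplectic isotopy $\Psi_{t}\in \Symp_{0}(M,\omega)$ with $\Psi_{t}|_{L}=\psi_{t}$; its ambient flux $F\in H^{1}(M;\R)$ satisfies $r_{L}(F)=\Flux_{L}(\psi)=0$, and surjectivity of $r_{L}$ together with the relation between ambient flux and Lagrangian flux allows one to correct $\Psi$ by a symplectic isotopy fixing $L$ setwise so that the corrected isotopy has ambient flux zero, i.e.\ is Hamiltonian.

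With the Hamiltonian diffeomorphism $\phi \in \Ham(M,\omega)$ satisfying $\phi(L)=\psi_{1}(L)$ in hand, the non-displaceability hypothesis gives $\phi(L)\cap L\neq \emptyset$, hence $\psi_{1}(L)\cap L\neq \emptyset$, which is precisely what was needed. The hardest step is the first: verifying that the surjectivity of $r_{L}$ suffices to convert a zero-flux Lagrange isotopy into a Hamiltonian one. Everything afterwards is essentially definition-chasing plus the non-displaceability assumption. Notice that if $\kappa_{H:L}(0)=+\infty$ (i.e.\ there is no admissible $\psi$) the inequality (\ref{eq34}) holds trivially, so we may assume such a $\psi$ exists.
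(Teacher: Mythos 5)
Your proposal is correct and follows essentially the same route as the paper: reduce to showing that every zero-flux Lagrange isotopy starting at $L$ ends at a Lagrangian meeting $L$, upgrade that isotopy to a Hamiltonian one via Solomon's \cite[Lemma 6.6]{Solomon13}, and invoke non-displaceability. (Your informal sketch of the ``correction'' step is slightly off — the class you need to absorb lies in $\ker(r_L)$, which is not what surjectivity of $r_L$ supplies — but this is immaterial since you cite Solomon's result directly, exactly as the paper does.)
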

\begin{proof}
	Let $\psi:L \times [0,1]\to M$ be a Lagrange isotopy starting at $L$ with $\Flux_L(\psi)=0\in H^1(L;\R)$. By \cite[Lemma 6.6]{Solomon13} there exists a symplectic isotopy $\theta:M\times [0,1]\to M$ with $\theta_0=\id$ such that $\theta_t(L)=\psi_t(L)$ for all $t\in [0,1]$ and such that $\Flux(\theta)=0\in H^1(M;\R)$. In particular $\theta_1\in \Ham(M,\omega)$. Hence, $\psi_1(L)\cap L=\theta_1(L)\cap L\neq \emptyset$ which implies (\ref{eq34}).
\end{proof}

The guiding philosophy above is that symplectic rigidity prevents a given Lagrangian from being (Lagrangian) isotoped with a given (Lagrange) flux into a fixed sublevel set of our Hamiltonian $H$. Perhaps the most fundamental situation where this idea can be used to estimate $\kappa_{H:L}$ from below is the following:
\begin{prop}
	\label{prop01}
	Let $H\in C^{\infty}(M;\R)$ be bounded from below. Suppose $c\in H^1(L;\R)$ and $k\in \R$ meet the condition 
	\begin{equation}
		\label{eq000}
		e_S(\Sigma_k)<\gamma_L(c).
	\end{equation}
	Then
	\[
	\kappa_{H:L}(c)\geq k.
	\]
\end{prop}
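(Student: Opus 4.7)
My plan is to prove the contrapositive in spirit: I will show that for every Lagrange isotopy $\psi:[0,1]\times L\to M$ starting at $L$ with $\Flux_L(\psi)=c$, the endpoint Lagrangian $\psi_1(L)$ cannot lie entirely inside $\Sigma_k$, so that $\max_{\psi_1(L)}H\geq k$. Taking the infimum over all such $\psi$ then yields $\kappa_{H:L}(c)\geq k$. If no admissible $\psi$ exists, then $\kappa_{H:L}(c)=+\infty$ by the convention $\inf\emptyset=+\infty$ and the inequality holds trivially.

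The argument rests on two ingredients that are already in hand. First, identity \eqref{eq33} tells us that the period group of $\psi_1(L)\subset(M,\omega)$ with respect to $\omega$ coincides with $\mathcal{G}_L(c)$, so $\psi_1(L)$ has rationality constant exactly $\gamma_L(c)$. Second, Chekanov's rigidity theorem \cite{Chekanov98}, in its stable form, gives the fundamental inequality
\[
e_S(\psi_1(L))\geq \gamma_L(c)
\]
for the closed Lagrangian $\psi_1(L)$; this covers uniformly the rational case $\gamma_L(c)\in(0,\infty)$ and the weakly exact case $\gamma_L(c)=+\infty$ (where it reads as stable non-displaceability). Now suppose towards a contradiction that $\psi_1(L)\subset\Sigma_k$. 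Monotonicity of stable displacement energy under inclusion of subsets then yields
\[
e_S(\psi_1(L))\leq e_S(\Sigma_k),
\]
and combining this with the two previous displays contradicts the standing hypothesis $e_S(\Sigma_k)<\gamma_L(c)$. Hence $\psi_1(L)\not\subset\Sigma_k$, so there exists $x\in\psi_1(L)$ with $H(x)\geq k$, giving $\max_{\psi_1(L)}H\geq k$.

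The argument is short because the hard symplectic content is imported: the real work sits in \eqref{eq33} (already granted) and in the stable form of Chekanov's inequality. The only point that requires care is to verify that the stable Chekanov bound is indeed available in the full generality of the present section, namely for closed Lagrangians in $(M,\omega)$ that is either closed or geometrically bounded, and uniformly across the discrete-nontrivial and trivial period group regimes. Apart from this bookkeeping, every step is a one-line deduction.
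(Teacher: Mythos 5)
Your proof is correct and follows essentially the same route as the paper: both arguments rely on the identity that the period group of $\psi_1(L)$ equals $\mathcal{G}_L(c)$, invoke the Polterovich--Chekanov bound $e_S(\psi_1(L))\geq \gamma_L(c)$ in its stable form, and combine it with monotonicity of (stable) displacement energy under inclusion to force $\psi_1(L)\nsubseteq\Sigma_k$. The caveat you raise about the availability of Chekanov's stable inequality in the geometrically bounded setting is reasonable, but the paper accepts it in the same generality without further comment.
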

\begin{proof}[Proof of Proposition \ref{prop01}]
	Consider a Lagrange isotopy $\psi:[0,1]\times L\to M$ starting at $L$ and satisfying 
	\begin{equation}
		\label{eq00}
		\Flux_L(\psi)=c.
	\end{equation}
	We need to show that $\max_{\psi_1(L)}(H)\geq k$. Since $S^1\subset T^*S^1$ is exact, $\mathcal{G}_L(c)$ coincides with the period group of $\psi_1(L)\times S^1 \subset (M\times T^*S^1,\omega \oplus \omega_0)$. Hence, a fundamental result first due to Polterovich \cite{Polterovich93} and later Chekanov \cite{Chekanov98} says that $\gamma_L(c) \leq e(\psi_1(L)\times S^1)=e_S(\psi_1(L))$ (i.e. $\psi_1(L)$ is stably non-displaceable if $\gamma_L(c)=\infty$). It now follows from (\ref{eq000}), that $e_S(\Sigma_k)<e_S(\psi_1(L))$ which in turn gives 
	\[
	\psi_1(L)\times S^1 \nsubseteq \Sigma_k\times S^1,
	\]
	or $\psi_1(L) \nsubseteq \Sigma_k=\{H<k\}$. Thus, $\max_{\psi_1(L)}(H)\geq k$ which finishes the proof.
\end{proof}
\begin{rem}
	Together Theorem \ref{thm00} and Proposition \ref{prop01} establish a link between the fundamental notion of \emph{displacement energy} in symplectic topology and the fundamental idea of looking for invariant measures with a prescribed rotation vector in Aubry-Mather theory. We hope to explore this idea further in future research. More precisely, we hope to find conditions where one can make sense of thinking of the measures in Theorem \ref{thm00} as Mather measures and derive more precise properties of these measures. Other results linking the idea of Hofer geometry/displacement energy with Aubry-Mather theory known to the author are \cite{Siburg98} and \cite{SorrentinoViterbo10}.
\end{rem}

\begin{proof}[Proof of Theorem \ref{thm01}]
	Recall that we consider an energy value $k\in \R$ and a class $c\in H^1(L;\R)$ such that $e_S(\Sigma_k)<\gamma_L(c)$. By Theorem \ref{thm00} it suffices to show that $\kappa_{H:L}(c)\geq k$. This is the content of Proposition \ref{prop01}. Note that in combination with Remark \ref{ref2rem2} on page \pageref{ref2rem2} this in fact also proves Theorem \ref{cor00}.
\end{proof}

\subsection{Examples}
\label{secexamples}

\begin{ex}
	\label{ex02}
	Consider $\R^{2n}$ equipped with the standard symplectic structure $\omega:=\sum_{l=1}^n dx_l\wedge dy_l$ and denote by $p:\R^{2n}\to \R^2$ the projection onto the first factor of $\R^{2n}=\R^2\times \cdots \times \R^2$. Choose a Hamiltonian $H\in C^{\infty}(\R^{2n})$ which is proper and bounded from below and fix an energy value $k\in \R$ such that $\Sigma_k \neq \emptyset$. Choose $r>0$ such that $p(\Sigma_k)\subset \R^2$ is contained in the open disc of radius $r$ centered at the origin. Denote by $L\subset (\R^{2n},\omega)$ the split Lagrangian $L=S^1(r)\times \cdots \times S^1(r)$ which is the product of circles in $\R^2$ of radius $r$, centered at the origin. Then $e_S(\Sigma_k)\leq e(\Sigma_k)<r^2\pi=\gamma(L)$, so Theorem \ref{thm01} (in combination with Remark \ref{ref2rem1}) gives the following statement: For every $c\in r^2\pi \cdot H^1(L;\Z)\subset H^1(L;\R)$ and every $a\in \sh(\Sigma_k;L,c)$ there exists $\mu \in \mathcal{M}(\Sigma_k;\phi_H)$ such that 
	\[
	\langle a,\rho(\mu) \rangle <0.
	\] 
\end{ex}

\begin{rem}
Suppose in the above example that $n=2$ and that $\Sigma_k$ contains a Lagrangian $2$-torus $L'\subset \Sigma_k$ such that the restriction map $H^1(\Sigma_k;\R)\to H^1(L';\R)$ is surjective. Then it follows from \cite[Theorem A]{RizellGoodmanIvrii16} that there exists a Lagrange isotopy $\psi:[0,1]\times L\to M$ such that $\psi_1(L)=L'$. In particular $\sh(\Sigma_k;L,c)\neq \emptyset$ for all $c\in H^1(L;\R)$. Of course in this case, one could also apply Theorem \ref{cor00} instead of Theorem \ref{thm01}.
\end{rem}
Apart from $\R^{2n}$, the main example we have in mind is that of Hamiltonian dynamics on a twisted cotangent bundle.

\begin{ex}
	Consider the two-torus $\mathbb{T}^2=\R^2 / \Z^2$ and denote by $\Omega$ a closed $2$-form on $\mathbb{T}^2$ such that $\int_{\mathbb{T}^2} \Omega=1$. The symplectic structure on the \emph{twisted cotangent bundle} $(T^*\mathbb{T}^2,\omega)$ is defined by 
	\[
	\omega:=d\lambda_{\mathbb{T}^2} +\pi^*\Omega,
	\] 
	where $\pi:T^*\mathbb{T}^2\to \mathbb{T}^2$ denotes the footpoint map and $\lambda_{\mathbb{T}^2}$ denotes the Liouville $1$-form. Note that $(T^*\mathbb{T}^2,\omega)$ is weakly exact. Let $H\in C^{\infty}(T^*\mathbb{T}^2)$ be a Hamiltonian which is proper and bounded from below. By \cite{GinzburgKerman99} every sublevel set $\Sigma_k=\{H<k\}$ is displaceable. We want to apply this fact to study the dynamics of $\phi_H|_{\Sigma_k}$. Suppose $L\subset \Sigma_k \subset (T^*\mathbb{T}^2,\omega)$ is a Lagrangian 2-torus.\footnote{There are several ways of finding Lagrangian 2-tori in $(T^*\mathbb{T}^2,\omega)$. One can find "small" ones in Darboux charts but there are also several other approaches. See e.g. Example \ref{ex03}. Note that, since every closed Lagrangian in $(T^*\mathbb{T}^2,\omega)$ is displaceable, it follows from the adjunction formula, that every closed orientable Lagrangian in $(T^*\mathbb{T}^2,\omega)$ is topologically a 2-torus.} Since $\pi_1(L)$ is Abelian, Lemma \ref{lem00} on page \pageref{lem00} guarantees the existence of a $\tilde{c}\in H^1(L;\R)$ such that $\partial \tilde{c}|_{\pi_2(M,L)}\equiv [\omega]|_{\pi_2(M,L)}$. Applying Theorem \ref{cor00} we then conclude that for any $a\in H^1(\Sigma_k;\R)$ satisfying 
	\[
	a|_L\in (e_S(\Sigma_k),\infty)\cdot H^1(L;\Z)-\tilde{c}
	\]
	and any $\epsilon>0$ there exists a $\mu \in \mathcal{M}(\Sigma_k;\phi_H)$ such that 
	\[
	\langle a,\rho(\mu) \rangle \leq \max_L H-k+\epsilon.
	\]
\end{ex}

\begin{ex}
	\label{ex03}
	Consider the 2-sphere $S^2$ equipped with a closed $2$-form $\Omega$ such that $\int_{S^2}\Omega=1$. Denote by $\pi:T^*S^2 \to S^2$ the footpoint map. We want to apply our results above to study Hamiltonian systems on the twisted cotangent bundle $(T^*S^2,\omega)$, where 
	\[
	\omega:=d\lambda_{S^2} + \pi^*\Omega.
	\] 
	Clearly $(T^*S^2,\omega)$ is \emph{not} weakly exact. There exists a rational Lagrangian 2-torus $L\subset (T^*S^2,\omega)$ with $\gamma_L(0)=1$. Assuming this fact for now, let $H\in C^{\infty}(T^*S^2)$ denote an autonomous Hamiltonian which is proper and bounded from below and let $k\in \R$ denote an energy value such that $e_S(\Sigma_k)<1$. Then by Theorem \ref{thm01} (and the following remark), for every $c\in H^1(L;\Z)$ and every $a\in \sh(\Sigma_k;L,c)$ there exists a $\mu \in \mathcal{M}(\Sigma_k;\phi_H)$ such that
	\[
	\langle a,\rho(\mu) \rangle <0.
	\] 
	For concreteness, consider $H(q,p)=\frac{|p|^2}{2}+U(q)$ for some Riemannian metric on $T^*S^2\to S^2$ and $U\in C^{\infty}(S^2)$. By \cite[Proposition 1.4]{Schlenk06} there exists a $\delta>0$ such that $e_S(\{(q,p)\ |\ |p|\leq \delta\})<1$. In particular we conclude that, for all sufficiently small $k>\min(H)=\min(U)$, we have $e_S(\Sigma_k)<1$. Of course, if $U$ is constant then $\Sigma_k$ is simply connected for all $k\in \R$, but this is not the case if $U$ is not constant. 
	
	In order to see the existence of $L$, let's be explicit and view 
	\[
	S^2=\{(x,y,z)\in \R^3 \ |\ x^2+y^2+z^2=1\}.
	\]
	Since $\Omega|_{S^2 \cap \{z>-1\}}$ is exact we can denote by $\xi$ a $1$-form on $S^2 \cap \{z>-1\}$ such that $\Omega=d\xi$ and choose a smooth function $\rho:S^2\to [0,1]$ such that 
	\[
	\rho(x,y,z)=
	\begin{cases}
	1, & \text{if}\ z\geq -1/2 \\
	0,& \text{if}\ z\leq -2/3.
	\end{cases}
	\]
	By Moser's argument the symplectic manifold $(T^*S^2,\widetilde{\omega})$, where 
	\[
	\widetilde{\omega}=\omega - d\pi^*(\rho \xi),
	\]
	is symplectomorphic to $(T^*S^2,\omega)$. Denote by $f:B^2(2)\to S^2$ an embedding which identifies the closed $2$-disc $B^2(2):=\{(x_1,x_2)\in \R^2 \ |\ x_1^2+x_2^2 \leq 4\}$ with the upper hemisphere $S^2 \cap \{z \geq 0\}$. Since $\widetilde{\omega}|_{S^2 \cap \{z \geq 0\}}=d\lambda_{S^2}|_{S^2 \cap \{z \geq 0\}}$ $f$ induces a symplectic embedding 
	\[
	F=(df^*)^{-1}:(T^*B^2(2),\lambda_{B^2(2)}) \to (T^*S^2,\widetilde{\omega})
	\]
	Denote by $T\subset B^4(2)\subset B^2(2)\times \R^2=T^2B^2(2)$ a rational Lagrangian $2$-torus with $\gamma(T)=1$. We claim that $F(T)\subset (T^*S^2,\widetilde{\omega})$ too is a rational Larangian $2$-torus with $\gamma(F(T))=1$. In order to see this, let $u:(B^2(1),\partial B^2(1))\to (T^*S^2,F(T))$ denote a smooth topological disc on $F(T)$. It suffices to check that $\int_u \widetilde{\omega} \in \Z$. Since we clearly have $\int_u d\lambda_{S^2}\in \Z$ it in fact suffices to check that $\int_{v} (\Omega-d(\rho \xi)) \in \Z$, where $v=\pi \circ u$. To see this we note that, since $\pi(F(T))\subset S^2 \cap \{z\geq 0 \}$ and $S^2 \cap \{z\geq 0 \}$ is contractible, we can extend $v$ to a map $\widetilde{v}:B^2(2) \to S^2$ such that $\widetilde{v}(\partial B^2(2))=\{(0,0,1)\}$. Since the $2$-form $(\Omega-d(\rho \xi))$ vanishes on $S^2 \cap \{z\geq 0\}$ we have $\int_{\widetilde{v}}(\Omega-d(\rho \xi))=\int_{v}(\Omega-d(\rho \xi))$ and since $\widetilde{v}(\partial B^2(2))=\{(0,0,1)\}$, $\widetilde{v}$ induces a map $\widehat{v}:S^2 \to S^2$. Now $\int_{\widetilde{v}}(\Omega-d(\rho \xi))$ is simply the degree of $\widehat{v}$. In particular it is an integer. This proves the claim that $F(T)\subset (T^*S^2,\widetilde{\omega})$ is a rational Lagrangian torus with $\gamma(F(T))=1$. Since $(T^*S^2,\widetilde{\omega})\cong (T^*S^2,\omega)$ we conclude that there exists a rational Lagrangian torus $L\subset (T^*S^2,\omega)$ with $\gamma(L)=1$. 
\end{ex}

\begin{rem}
	Of course the above examples can also be carried out for twisted cotangent bundles of closed manifolds $N\neq \T^2,S^2$. The main point is that if $[\omega]\neq 0$ then $N\subset (T^*N,\omega)$ has stable displacement energy 0 by \cite[Proposition 1.4]{Schlenk06}, so if $H\in C^{\infty}(T^*N)$ is \emph{mechanical}\footnote{I.e. $H(q,p)=\frac{|p|^2}{2}+U(q)$ for some Riemannian metric and $U\in C^{\infty}(N)$.} then $\Sigma_k$ has small displacement energy for all small enough $k>\min(H)$. However, the choice of $S^2$ in the above example is not coincidental: One could ask if it is possible to study Hamiltonian systems on the twisted $T^*S^2$ from the previous example using Lagrangian Floer homology. However, this approach seems unlikely to succeed. Indeed, as pointed out to me by MathOverflow-user Nikolaki \cite{Nikolaki}, if one manages to find a Lagrangian $L\subset (T^*S^2,\omega)$ for which there exists a well-defined Floer homology $HF(L)$, then probably $HF(L)=0$. This follows from the fact that (if well-defined) $HF(L)$ should be a module over the symplectic homology $SH(T^*S^2)$ of $(T^*S^2,\omega)$ and a result due to Benedetti and Ritter \cite[Theorem 1.1]{BenedettiRitter18} says that $SH(T^*S^2)=0$ (see also	\cite{Ritter14} as well as Benedetti's thesis \cite{Benedetti15} for applications of symplectic homology to Hamiltonian systems on twisted cotangent bundles).
\end{rem}

\section{Preliminaries}
\label{seqprem}
Here we discuss some of the background material used frequently in the above sections. Recall that a Lagrangian submanifold $L\subset (M,\omega)$ is said to be \emph{monotone} if there exists a constant $\tau_L\geq 0$ such that 
\[
\omega|_{\pi_2(M,L)}=\tau_L\cdot \mu|_{\pi_2(M,L)},
\]
where $\omega|_{\pi_2(M,L)}$ denotes integration of $\omega$ and $\mu|_{\pi_2(M,L)}$ denotes the Maslov index. If in addition the minimal Maslov number of $L$ is $\geq 2$, then the quantum homology $QH_*(L;\Z_2)$ and Floer homology $HF(L;\Z_2)$ of $L$ are well-defined \cite{BiranCornea07}, \cite{Zapolsky15}. Our main reference for $QH_*(L;\Z_2)$ and $HF_*(L;\Z_2)$ is Zapolsky's excellent \cite{Zapolsky15}. In order to set the notation we use here we will discuss a few details about $HF_*(L;\Z_2)$ and the construction of spectral invariants. Recall that, given a Hamiltonian $H\in \mathcal{H}$ such that $\phi_H^1(L)\pitchfork L$ and a generic path $\{J_t\}_{t\in [0,1]}$ of $\omega$-compatible almost complex structures, $HF_*(L{:}H,J)$ is the homology of a Morse-type chain complex $(CF_*(L{:}H,J),d)$ generated by critical points of the action functional 
\[
\mathcal{A}_{H:L}(\widetilde{\gamma}=[\gamma,\widehat{\gamma}])=\int_0^1 H_t(\gamma(t))\ dt-\int \widehat{\gamma}^*\omega.
\]
Here the input $\widetilde{\gamma}$ consists of a smooth path $\gamma{:}([0,1],\{0,1\})\to (M,L)$ as well as an equivalence class of \emph{cappings} $\widehat{\gamma}$ of $\gamma$ in the sense of \cite{Zapolsky15}. The equivalence relation identifies cappings which have the same symplectic area.
As a result, generators of $CF_*(L{:}H,J)$ have an associated action and we denote by $(CF^a_*(L{:}H,J),d)$ $(a\in \R)$ the subcomplex generated by elements whose action is $<a$. This is indeed a subcomplex as $d$ decreases action, and its homology is denoted by $HF^a_*(L{:}H,J)$. The inclusion $i_a:CF^a_*(L{:}H,J)\to CF_*(L{:}H,J)$ induces maps $i_*^a:HF_*^a(L{:}H,J)\to HF_*(L{:}H,J)$. By standard arguments $HF_*(L{:}H,J)$ is independent of the data $(H,J)$ and identifying all such groups via canonical continuation isomorphisms we obtain a ring $HF(L;\Z_2)$, which is isomorphic to $QH(L;\Z_2)$ via a $\PSS$-isomorphism 
\[
\PSS:QH_*(L;\Z_2)\stackrel{\cong}{\to}HF_*(L;\Z_2).
\]
Under the assumption $QH_*(L;\Z_2)\neq 0$, the Leclercq-Zapolsky spectral invariant $l_L:\widetilde{\Ham}(M,\omega)\to \R$ which we consider is defined by 
\[
l_L(\tilde{\phi}_H):=\inf\{a\in \R \ |\ \PSS([L])\in \Image(i^a_*)\subset HF_*(L;\Z_2)  \},
\] 
where $i_*^a:HF_*^a(L{:}H,J)\to HF_*(L{:}H,J)\cong HF_*(L;\Z_2)$ and $[L]\in QH(L;\Z_2)$ denotes the "fundamental class", i.e. the unity for the ring-structure on $QH_*(L;\Z_2)$. For further details on $l_L$ and the properties satisfied by $l_L$ we refer to \cite{LeclercqZapolsky15} and \cite{ZapolskyMonznerVichery12}. An important property for us is that $l_L$ satisfies the \emph{triangle inequality}:
\[
l_L(\tilde{\phi}_H\tilde{\phi}_K)\leq l_L(\tilde{\phi}_H)+l_L(\tilde{\phi}_K) \quad \forall \ H,K\in \mathcal{H}.
\]
As a consequence of this, the limit (\ref{eq2}) exists and $\sigma_{H:L}$ is well-defined.
The following theorem is due to \cite{ZapolskyMonznerVichery12} in a slightly different setting. In fact, \cite{ZapolskyMonznerVichery12} presents many more properties of $\sigma_{H:L}$ in the setting of a cotangent bundle. Here we only list the properties which will be useful to us. 
\begin{thm}[\cite{ZapolskyMonznerVichery12}]
	\label{prop1}
	Given any $H\in \mathcal{H}$, the function $\sigma_{H:L}:H^1(M;\R)\to \R$ from Definition \ref{def2} is well-defined and it satisfies the following properties 
	\begin{enumerate}[a)]
		\item 
		$\sigma_{H:L}$ is locally Lipschitz with respect to any norm on $H^1(M;\R)$. In particular $\sigma_{H:L}$ is differentiable almost everywhere.
		\item
		Suppose $\psi=\{\psi_t \}_{t\in [0,1]}$ is a symplectic isotopy with $\psi_0=\id$. Then 
		\[
		\min_{x\in \psi_1(L)}H(x)\leq \sigma_{H:L}(c)\leq \max_{x\in \psi_1(L)}H(x),
		\]
		where $c:=\Flux(\psi)\in H^1(M;\R)$.
		\item 
		If $\psi=\{\psi_t\}_{t\in [0,1]} \subset \Symp(M,\omega )$ is a path of symplectomorphisms with $\psi_0=\id$ and $c':=\Flux(\psi)\in H^1(M;\R)$ then 
		\[
		\sigma_{H:\psi_1(L)}(c)=\sigma_{H:L}(c+c') \quad \forall \ c\in H^1(M;\R).
		\]
		\item
		For every $\phi \in \Ham(M,\omega)$ we have $\sigma_{H:L}=\sigma_{H:\phi(L)}$. 
	\end{enumerate}
\end{thm}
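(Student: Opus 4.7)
The argument rests on four properties of the Leclercq--Zapolsky invariant $l_L$ from \cite{LeclercqZapolsky15}: (i) subadditivity (already quoted above); (ii) Hofer Lipschitz continuity, which for time-independent normalized $F,G$ gives $|l_L(\tilde{\phi}_F)-l_L(\tilde{\phi}_G)|\leq 2\|F-G\|_{C^0}$; (iii) the Lagrangian normalization bound $\int_0^1\min_L H_t\,dt \leq l_L(\tilde{\phi}_H) \leq \int_0^1\max_L H_t\,dt$; and (iv) the naturality $l_L(\alpha^{-1}\tilde{\phi}\alpha)=l_{\alpha(L)}(\tilde{\phi})$ under symplectic conjugation $\alpha\in\Symp(M,\omega)$, inherited from the action-preserving pullback isomorphism of Floer complexes. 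The key bookkeeping identity used throughout is that conjugation of the Hamiltonian isotopy $t\mapsto\phi_H^{tk}$ by $\psi_1\in\Symp(M,\omega)$ is the Hamiltonian isotopy generated by $kH\circ\psi_1\in\mathcal{H}$ (the normalization is preserved since $\psi_1$ preserves $\omega^n$), so
\[
\psi_1^{-1}\tilde{\phi}_H^k\psi_1=\tilde{\phi}^1_{kH\circ\psi_1}\quad\text{in}\ \widetilde{\Ham}(M,\omega).
\]

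Existence of the limit follows from (i): the sequence $a_k:=l_L(\psi_1^{-1}\tilde{\phi}_H^k\psi_1)$ is subadditive, so by Fekete's lemma $a_k/k$ converges to $\inf a_k/k$; applying (iii) to the displayed identity and dividing by $k$ gives both finiteness of the limit and property (b) at once. To see that $\sigma_{H:L}(c)$ depends only on $c=\Flux(\psi)$, observe that two paths $\psi^{(1)},\psi^{(2)}$ of equal flux differ at time $1$ by a $\phi\in\Ham(M,\omega)$; (iv) rewrites $a_k^{(i)}$ as $l_{\psi^{(i)}_1(L)}(\tilde{\phi}_H^k)$, and the discrepancy between the two resulting sequences is controlled, via (i) and (iv) applied to $\phi$, by $l_{\psi^{(2)}_1(L)}(\phi^{\pm 1})=O(1)=o(k)$.

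Properties (c) and (d) are then short. For (c), given paths $\psi,\eta$ of fluxes $c',c$ respectively, define the concatenation $\xi_t:=\psi_{2t}$ on $[0,\tfrac12]$ and $\xi_t:=\eta_{2t-1}\psi_1$ on $[\tfrac12,1]$. Then $\xi_1=\eta_1\psi_1$ and, by left-invariance of the flux, $\Flux(\xi)=c+c'$. Plugging $\xi$ into the defining formula for $\sigma_{H:L}(c+c')$ and applying (iv) with $\alpha=\psi_1$ converts the inner expression $l_L(\psi_1^{-1}\eta_1^{-1}\tilde{\phi}_H^k\eta_1\psi_1)$ into $l_{\psi_1(L)}(\eta_1^{-1}\tilde{\phi}_H^k\eta_1)$, which is exactly what computes $\sigma_{H:\psi_1(L)}(c)$. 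Property (d) is then immediate from (c), since any Hamiltonian isotopy has zero flux.

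The main obstacle is the Lipschitz estimate (a). The plan is to fix two paths $\psi^{(1)},\psi^{(2)}$ of fluxes $c_1,c_2$ and write $\psi^{(1)}_1=\xi_1\psi^{(2)}_1$, where $\xi$ is the flow of the symplectic vector field $\omega$-dual to a harmonic representative $\beta$ of $c_1-c_2$ (for any background Riemannian metric on $M$). Then $\|H\circ\psi^{(1)}_1-H\circ\psi^{(2)}_1\|_{C^0}=\|H\circ\xi_1-H\|_{C^0}$, and the chain rule $\tfrac{d}{dt}(H\circ\xi_t)=\beta(X_H)$ bounds this by $\|\beta\|_{C^0}\|X_H\|_{C^0}\leq C\,\|X_H\|_{C^0}\,|c_1-c_2|$, with $C$ supplied by standard elliptic estimates for harmonic forms. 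Inserting this into (ii) applied to the time-independent Hamiltonians $kH\circ\psi^{(i)}_1$, the factors of $k$ cancel on dividing by $k$, and one obtains a Lipschitz bound locally uniform in $c$, since $\|X_H\|_{C^0}$ depends only on $H$.
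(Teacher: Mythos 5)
The paper does not present its own proof of this theorem — it is imported from \cite{ZapolskyMonznerVichery12} (the statement is labeled as such), and the only fragment of an argument given in Section~\ref{seqprem} is the remark that the triangle inequality for $l_L$ forces subadditivity of $k\mapsto l_L(\psi_1^{-1}\tilde{\phi}_H^k\psi_1)$, hence existence of the limit in Definition~\ref{def2}. Your proposal is correct and reconstructs the argument from the same four standard properties of the Leclercq--Zapolsky invariant (subadditivity, Hofer Lipschitz continuity, Lagrangian control via $\min_L/\max_L$, conjugation naturality), which is exactly the route used in \cite{ZapolskyMonznerVichery12} in the cotangent setting; the conjugation identity $\psi_1^{-1}\tilde{\phi}_H^k\psi_1=\tilde{\phi}^1_{kH\circ\psi_1}$ is the right bookkeeping device. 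Three small remarks. First, in the flux computation for the concatenated isotopy, the path $t\mapsto\eta_{2t-1}\psi_1$ is a right translation of $\eta$; the generating vector field is unchanged, so it is right-invariance of flux you are invoking, not left-invariance. Second, the step ``equal flux $\Rightarrow$ time-$1$ maps differ by a Hamiltonian diffeomorphism'' implicitly uses that $\widetilde{\Ham}(M,\omega)$ is the kernel of the flux homomorphism $\widetilde{\Symp_0}(M,\omega)\to H^1(M;\R)$; that is standard on a closed $M$ but deserves a citation. Third, the harmonic-representative trick in part (a) actually produces a \emph{global} Lipschitz constant $\leq C\,\|X_H\|_{C^0}\cdot|c_1-c_2|$, which is stronger than the stated local Lipschitz conclusion; this is harmless since $M$ is compact here, but it is worth noticing that this is where compactness (or a growth hypothesis on $H$) enters in the non-compact variant of Section~\ref{noncompact}.
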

In \cite{Bisgaard16} we obtained an alternative version of the last property: If $L'\subset (M,\omega)$ is another monotone Lagrangian submanifold which is monotone Lagrangian cobordant to $L$ then $\sigma_{H:L}=\sigma_{H:L'}$.
We now make a few remarks concerning the definition of $\sigma_{H:L}$ from Section \ref{noncompact}. For this purpose, let $(M,\omega=d\lambda)$ be a Liouville manifold with ideal contact boundary $(\Sigma,\lambda_0)$ (see page \pageref{Mcondb}). We first prove Proposition \ref{Mprop1}.
\begin{proof}[Proof of Proposition \ref{Mprop1}]
Fix a closed 1-form $\eta_0$ on $M$ satisfying $[\eta_0]=c\in H^1_{dR}(M;\R)$. Define $\eta_1$ on $(0,\infty)\times \Sigma$ as the pull-back of $\eta_0|_{\{1\}\times \Sigma}$ via the projection $(0,\infty)\times \Sigma \to \Sigma$. By homotopy invariance $[\eta_0|_{(0,\infty)\times \Sigma}]=[\eta_1]$ in $H^1_{dR}((0,\infty)\times \Sigma;\R)$, so there exists $f\in C^{\infty}((0,\infty)\times \Sigma)$ such that $\eta_1=\eta_0|_{(0,\infty)\times \Sigma}+df$. Choose a function $\chi \in C^{\infty}((0,\infty))$ such that $\chi(s)=0$ for all $s\in (0,\tfrac{1}{4})$ and $\chi(s)=1$ for all $s\in (\tfrac{3}{4},\infty)$. Then 
\[
\eta:=\eta_0+d(\chi f)
\]
defines a closed 1-form on all of $M$ in the class $c$. Since $\eta=\eta_1$ on $(\tfrac{3}{4},\infty)\times \Sigma$ we have the wanted property:
\begin{equation}
\label{Meq2}
\eta|_{\{s\}\times \Sigma}=\eta|_{\{1\}\times \Sigma} \quad \forall \ s\in [1,\infty).
\end{equation}
An easy computation using that $\omega=ds\wedge \lambda_0+sd\lambda_0$ on $(0,\infty)\times \Sigma$ shows that, if $\eta$ satisfies (\ref{Meq2}), then the vectorfield $X\in \mathfrak{X}(M)$ characterized by $\eta =i_X\omega$ satisfies $ds(X)|_{(s,x)}=\eta_{(s,x)}(R(x))=\eta_{(1,x)}(R(x))$ for all $(s,x)\in [1,\infty)\times \Sigma$ (here $R$ denotes the Reeb vector field on $(\Sigma,\lambda_0)$). Since $\Sigma$ is compact, this implies that $X$ is complete.
\end{proof}

Suppose now that $\eta_1$ and $\eta_2$ are two closed 1-forms in the class $c$ satisfying (\ref{Meq2}) for (potentially) different ideal contact boundaries $(\Sigma_1,\lambda_0^1)$ and $(\Sigma_2,\lambda_0^2)$ of $(M,d\lambda)$. Denote by $(\psi^k_t)_{t\in \R}$ the flows generated by the vectorfields $X_k\in \mathfrak{X}(M)$ characterized by $i_{X_k}\omega=\eta_k$, $k=1,2$. The symplectic isotopy $t\mapsto \psi_{-t}^1\psi^2_t$ is generated by the time-dependent vectorfield $(\psi^1_{-t})_*X_2-X_1$. Since
\[
[i_{(\psi^1_{-t})_*X_2-X_1}\omega]=[i_{(\psi^1_{-t})_*X_2}\omega]-[\eta_1]=[(\psi_t^1)^*\eta_2]-[\eta_1]=c-c=0
\]
it follows that $\psi_{-t}^1\psi^2_t=\phi_t$ for a (possibly non-compactly supported) Hamiltonian isotopy $t\mapsto \phi_t$. Since $\psi_1^2=\psi^1_1\phi_1$ it follows from arguments completely analogous to those in \cite{ZapolskyMonznerVichery12} that for $H\in \mathcal{H}$ we have 
\begin{equation}
\label{Meq3}
\lim_{\N \ni k\to \infty}\frac{l_L((\psi^1_1)^{-1}\widetilde{\phi}_H^k\psi^1_1)}{k}=\lim_{\N \ni k\to \infty}\frac{l_L((\psi^2_1)^{-1}\widetilde{\phi}_H^k\psi^2_1)}{k}.
\end{equation}
The crux of this argument (like in \cite{ZapolskyMonznerVichery12}) lies in the fact that, since $\mathcal{O}_+(L)$ is compact, all Hamiltonians involved in the above computation can be cut-off outside a compact set without changing the spectral invariants. Hence, ultimately one only has to work with compactly supported Hamiltonians (for which (\ref{Meq3}) is standard). This concludes the reasoning that $\sigma_{H:L}(c)$ is well-defined in the non-compact setting. Moreover, it is Lipschitz by exactly the same arguments as in \cite{ZapolskyMonznerVichery12}, so it has a non-empty set of Clarke subdifferentials at every point $c\in H^1(M;\R)$.

\subsection{Subdifferentials and rotation vectors}
\label{secmeas}
In this section we recall a few basic facts about the Clarke subdifferential $\partial f$ of a function $f$ as well as about the approximate subdifferential $\partial_A f$ which is needed in the proofs below. For further references on the Clarke subdifferential we refer to \cite{Clarke83} and for further references on the approximate subdifferential we refer to \cite{Ioffe84}. Denote by $V$ a finite dimensional vector space over $\R$ and by $f:V\to \R$ a function which is locally Lipschitz (with respect to any norm on $V$). The generalized directional derivative $f^{\circ}(x,v)$ of $f$ at $x\in V$ in direction $v\in V$ is by definition the number
\[
f^{\circ}(x,v):=\limsup_{\substack{y\to x \\ \tau \downarrow 0}}\frac{f(y+\tau v)-f(y)}{\tau}. 
\]
$f^{\circ}(x,v)$ is a real number because $f$ is locally Lipschitz. The \emph{Clarke subdifferential} (or \emph{generalized derivative}) of $f$ at $x\in V$ is by definition the non-empty subset 
\[
\partial f(x)=\{v^*\in V^*\ |\  \langle v^*,v\rangle \leq f^{\circ }(x,v) \ \forall \ v\in V\}
\] 
of $V^*$. Note that since $f$ is Locally Lipschitz it is differentiable almost everywhere.\footnote{This should be understood in the following sense: If we identify $V\cong \R^k$ by choosing a basis, then the set of points in $\R^k$ at which $f$ isn't differentiable is a Lebesgue 0-set.}
A key property of $\partial f$ is \cite[Theorem 2.5.1]{Clarke83}, according to which, for any Lebesgue 0-set $S\subset V$ containing the non-differentiability points of $f$ we have
\begin{equation}
\label{eq99}
\partial f(x)=\conv( \{\lim_{\varsigma \to \infty}df(x_{\varsigma})\ |\  (x_{\varsigma})_{\varsigma \in \N}\subset V\backslash S \ \text{s.t.} \ x_{\varsigma}\to x \} ),
\end{equation}
where $\conv(X)\subset V$ denotes the convex hull of the subset $X\subset V$. I.e. $\partial f(x)$ is the convex hull of the set of all limit points of sequences of differentials $df(x_{\varsigma})$ of $f$ with $(x_{\varsigma})_{\varsigma \in \N}\subset V\backslash S$ a sequence such that $x_{\varsigma}\to x$. Another important property which will be used below is Lebourg's mean value theorem for $\partial f$ \cite[Theorem 2.3.7]{Clarke83}: For every pair $x,y\in V$ there exists a $t\in (0,1)$ such that for some 
\[
v^*\in \partial f(ty+(1-t)x)
\]
we have
\[
\langle v^*,x-y \rangle =f(x)-f(y).
\] 

We will now discuss some of the basic properties of the approximate subdifferential $\partial_Af$ of $f$ due to Ioffe \cite{Ioffe84}. We point out that the $\partial_Af$ makes sense also if $f$ is not locally Lipschitz, but here we restrict ourselves to considering the case when $f$ is locally Lipschitz. For $x,v\in V$, the \emph{lower Dini directional derivative} of $f$ at $x$ in direction $v$ is defined by 
\[
d^-f(x;v):=\liminf_{\tau \downarrow 0}\frac{f(x+\tau v)-f(x)}{\tau }\in \R
\]
and the \emph{Dini subdifferential} $\partial^-f(x)\subset V^*$ of $f$ at $x$ is the subset 
\begin{equation}
\label{eq1}
\partial ^-f(x):=\{ v^*\in V^*\ |\ \langle v^*,v\rangle \leq d^-f(x;v) \ \forall \ v\in V \}
\end{equation}
of $V^*$. Finally the approximate subdifferential of $f$ at $x$ is defined as the subset\footnote{$||\cdot ||$ is any norm on $V$.}
\begin{equation}
\label{eq222}
\partial_Af(x):=\bigcap_{\delta>0}\bigcup_{\ ||z-x||<\delta}\partial^-f(z), 
\end{equation}
of $V^*$. A key property of $\partial_Af$ (when $f$ is locally Lipschitz) is \cite[Theorem 2]{Ioffe84}, according to which the Clarke subdifferential and the approximate subdifferential are related by 
\begin{equation}
\label{eq111}
\partial f(x)=\overline{\conv(\partial_Af(x))} \quad \forall \ x\in V.
\end{equation}
Our main interest in $\partial_A$ is that it behaves well under uniform approximation: By a result due to Jourani \cite[Theorem 3.2]{Jourani99}, if $(f_{\varsigma})_{\varsigma \in \N}$ is a sequence of locally Lipschitz functions $V\to \R$ such that 
\[
f_{\varsigma} \stackrel{\varsigma \to \infty}{\longrightarrow}f
\]
uniformly, then 
\begin{equation}
\label{eq333}
\partial_Af(x)\subset \limsup_{\substack{ \varsigma \to \infty \\ y\to x}}\partial_Af_{\varsigma}(y) \quad \forall \ x\in V,
\end{equation}
where 
\[
\limsup_{\substack{\varsigma \to \infty\\ y\to x}}\partial_Af_{\varsigma}(y):=\bigcap_{\varsigma =1}^{\infty}\bigcap_{\delta>0} \overline{\bigcup_{\substack{k\geq \varsigma  \\ 0<|x-y|<\delta}}\partial_Af_k(y)}.
\]
We point out that this result is a generalization of Ioffe's result \cite[Theorem 3]{Ioffe84}.

We are interested in the subdifferential of $\sigma_{H:L}:H^1(M;\R)\to \R$ because its elements are rotation vectors of $\mathcal{M}(\phi_H)$-measures (Theorem \ref{thm1}). We recall that, under the canonical identification $H_1(M;\R)=H^1_{dR}(M;\R)^*$, the rotation vector $\rho(\mu)\in H_1(M;\R)$ of a $\phi_H$-invariant measure $\mu \in \mathcal{M}(\phi_H)$ is given by  
\begin{equation}
\label{eq10}
\langle [\eta],\rho(\mu)\rangle =\int \langle \eta,X_H \rangle \ d\mu, \quad [\eta]\in H^1_{dR}(M;\R).
\end{equation}
For further details and in-depth explanations of rotation vectors we refer to \cite{Schwartzman57} or \cite{Sorrentino15}. 

\section{Proofs of results}
\label{secproof}
Before proving the results from Section \ref{MatherFloer} we present the remaining proofs of results from Section \ref{secC0}. This means proving Theorem \ref{thm01}. Before doing that we prove a lemma which was implicitely used in the statement of Theorem \ref{cor00}.
\begin{lemma}
	\label{lem00}
	Let $(M,\omega)$ be weakly exact (i.e. $\omega|_{\pi_2(M)}\equiv 0$) and denote by $L\subset (M,\omega)$ a Lagrangian satisfying either the condition that $\pi_1(L)$ is Abelian or that the map $\pi_1(L)\to \pi_1(M)$ is trivial. Then there exists $\tilde{c}\in H^1(L;\R)$ such that 
	\[
	\partial \tilde{c}|_{\pi_2(M,L)}\equiv [\omega]|_{\pi_2(M,L)}.
	\]
\end{lemma}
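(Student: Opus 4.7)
The plan is to construct $\tilde c\in H^1(L;\R)=\mathrm{Hom}(H_1(L;\Z),\R)$ as a homomorphism $H_1(L;\Z)\to \R$ that records the symplectic areas of topological discs in $M$ with boundary on $L$. The starting point is the long exact sequence of homotopy groups of the pair $(M,L)$,
\[
\pi_2(M)\xrightarrow{j_*}\pi_2(M,L)\xrightarrow{\partial_\pi}\pi_1(L)\xrightarrow{i_*}\pi_1(M),
\]
together with the area homomorphism $A:\pi_2(M,L)\to \R,\ A([u]):=\int_u\omega$. Weak exactness of $(M,\omega)$ is precisely the statement that $A\circ j_*=0$, so by exactness $A$ factors through $\mathrm{im}(\partial_\pi)\leq \pi_1(L)$, producing a homomorphism $\bar A:\mathrm{im}(\partial_\pi)\to \R$.

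Next, I would extend $\bar A$ to a homomorphism $\tilde A:\pi_1(L)\to \R$ using the two hypotheses separately. If $i_*:\pi_1(L)\to\pi_1(M)$ is trivial, exactness forces $\partial_\pi$ to be surjective, so there is nothing to extend and I may simply take $\tilde A:=\bar A$. If instead $\pi_1(L)$ is abelian, then $\pi_1(L)=H_1(L;\Z)$ by Hurewicz, and since $\R$ is a divisible abelian group (equivalently, an injective $\Z$-module) the homomorphism $\bar A$ from the subgroup $\mathrm{im}(\partial_\pi)\leq H_1(L;\Z)$ extends to all of $H_1(L;\Z)$. In either case, because $\R$ is abelian, $\tilde A$ factors through the abelianization $H_1(L;\Z)$, and therefore defines the desired class $\tilde c\in H^1(L;\R)$ via universal coefficients.

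The verification then proceeds via the standard duality between the connecting maps in (co)homology: for any $[u]\in H_2(M,L;\Z)$ one has $\langle \partial \tilde c,[u]\rangle =\langle \tilde c,[\partial u]\rangle$, where $[\partial u]\in H_1(L;\Z)$ denotes the topological boundary. Applied to the Hurewicz image of a class $[u]\in \pi_2(M,L)$ this gives
\[
\langle \partial \tilde c,[u]\rangle =\tilde A(\partial_\pi [u])=\bar A(\partial_\pi [u])=A([u])=\langle [\omega],[u]\rangle,
\]
which is exactly the required identity $\partial \tilde c|_{\pi_2(M,L)}=[\omega]|_{\pi_2(M,L)}$; note that $[\omega]$ is well defined as a relative class in $H^2(M,L;\R)$ because $L$ is Lagrangian, so $\omega|_L=0$. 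The only mildly non-formal step, and what I expect to be the main (if modest) obstacle, is the extension of $\bar A$ to all of $H_1(L;\Z)$ in the abelian case, which is handled by the injectivity of $\R$ as a $\Z$-module; everything else is formal bookkeeping with long exact sequences.
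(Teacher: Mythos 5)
Your argument is correct and follows essentially the same route as the paper's: use the homotopy long exact sequence of $(M,L)$ to view the area homomorphism as defined on $\mathrm{im}(\partial_\pi)\leq\pi_1(L)$ and then extend to all of $\pi_1(L)$. The only superficial difference is in the extension step for abelian $\pi_1(L)$: you invoke injectivity of $\R$ as a $\Z$-module directly, whereas the paper first passes to the torsion-free quotient $\pi_1(L)/\mathrm{Torsion}$ and extends from there --- two phrasings of the same divisibility argument.
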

\begin{proof}
	Since $(M,\omega)$ is weakly exact we may view $[\omega]$ as a homomorphism 
	\[
	\pi_2(M,L)/\pi_2(M)\to \R.
	\]
	By the long exact sequence for homotopy groups we may view $\pi_2(M,L)/\pi_2(M)$ as a subgroup $\pi_2(M,L)/\pi_2(M) \leq \pi_1(L)$. If $\pi_1(L)\to \pi_1(M)$ is trivial then 
	\[
	\pi_2(M,L)/\pi_2(M) \cong \pi_1(L),
	\]
	so $[\omega]$ gives rise to a homomorphism $\pi_1(L)\to \R$ which is given by an element $\tilde{c}\in H^1(L;\R)$. In case $\pi_1(L)$ is Abelian (so $\pi_1(L)=H_1(L;\Z)$) $[\omega]$ descends to a homomorphism $G\to \R$, where $G$ denotes the image of $\pi_2(M,L)/\pi_2(M)$ under the quotient map
	\[
	\pi_1(L)\to \pi_1(L)/\text{Torsion}.
	\]
	Any homomorphism $G\to \R$ extends to a homomorphism $\pi_1(L)/\text{Torsion}\to \R$ by elementary algebra (use e.g. \cite[Chapter II, Theorem 1.6]{Hungerford80}). The composition
	\[
	\pi_1(L)\to \pi_1(L)/\text{Torsion}\to \R
	\]
	extends $[\omega]$ and corresponds to an element $\tilde{c}\in H^1(L;\R)$.
\end{proof}

\subsection{Proof of Theorem \ref{thm00}}
\label{secproofthm00}
The proof relies heavily on the work of Entov-Polterovich \cite{EntovPolterovich17} as well as the work of Polterovich \cite{Polterovich14}. Given a 1-form $\eta$ on $M$ we denote by $X_{\eta}$ the vector field defined by requiring 
\[
\iota_{X_{\eta}}\omega =\eta.
\]
Given an autonomous Hamiltonian $H\in C^{\infty}(M)$, the Poisson bracket $\{H,\eta\}$ is the function given by
\[
\{H,\eta\}:=\omega(X_{\eta},X_H)=\eta(X_H)=dH(X_{\eta}).
\]
Given an open subset $U\subset M$ we consider the following quantity which is a slight modification of a quantity appearing in \cite{Polterovich14}. 
\[
pb^+_{H,U}(c):=\inf_{\eta \in c}\max_{x\in U}\{H,\eta\}(x), \quad c\in H^1(U;\R).
\]
If $X\subset U$ is a compact subset we also define 
\[
pb_{H,U}^+(c;X):=\inf_{\eta \in c}\max_{x\in X}\{H,\eta\}(x), \quad c\in H^1(U;\R).
\] 
The following theorem which is due to Polterovich is the main tool which allows us to construct invariant measures with given rotation vectors using $pb^+_{H,U}$. In \cite{Polterovich14} the statement appears in a slightly different form, but the version presented here follows directly from the theory developed in \cite{Polterovich14}.
\begin{thm}[\cite{Polterovich14}]
	\label{thm02}
	Let $H\in C^{\infty}(M)$ be an autonomous Hamiltonian which is proper and bounded from below. Given $k\in \R$ and $\delta>0$ we set $X:=\{H\leq k-\delta\}$. Suppose for some $c\in H^1(U;\R)$ there exists $\epsilon >0$ such that
	\[
	pb^+_{H,\Sigma_k}(c;X)\geq \epsilon .
	\]
	Then there exists a $\mu \in \mathcal{M}(\Sigma_k;\phi_H)$ with $\supp(\mu) \subset X$, which satisfies 
	\[
	\langle c,\rho(\mu) \rangle \geq \epsilon.
	\]
\end{thm}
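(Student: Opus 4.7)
The plan is to deduce Theorem \ref{thm02} from the standard duality between invariant measures and smooth coboundaries. First I would note that, since $H$ is proper, bounded from below, and constant along orbits of $\phi_H$, the set $X=\{H\leq k-\delta\}$ is compact and $\phi_H$-invariant; hence $\mathcal{M}(X;\phi_H)$ is non-empty (Krylov--Bogolyubov), convex, and weak-$*$ compact in $C(X)^*$. Fix any closed 1-form $\eta_0$ on $\Sigma_k$ representing $c$ and set $g:=\eta_0(X_H)\in C(X)$. For every $\mu\in\mathcal{M}(X;\phi_H)$ and every $f\in C^\infty(\Sigma_k)$ one has $\int X_Hf\,d\mu=0$ (differentiate invariance), so $\int g\,d\mu$ depends only on $c$ and equals $\langle c,\rho(\mu)\rangle$; in particular the conclusion of the theorem will follow once I produce some $\mu\in\mathcal{M}(X;\phi_H)$ with $\int g\,d\mu\geq\epsilon$.

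The core of the argument will be the duality
\[
\sup_{\mu\in\mathcal{M}(X;\phi_H)}\int g\,d\mu\;=\;\inf_{f\in C^\infty(\Sigma_k)}\max_{X}\bigl(g+X_Hf\bigr).
\]
The inequality $\leq$ is immediate: if $g+X_Hf\leq C$ on $X$, then integrating against an invariant $\mu$ yields $\int g\,d\mu\leq C$. For the reverse inequality I would invoke Hahn--Banach in $C(X)$: let $B\subset C(X)$ denote the subspace of restrictions $\{X_Hf|_X:f\in C^\infty(\Sigma_k)\}$, observe that $\max_X b\geq 0$ for every $b\in B$ (since $\min_X b\leq \int b\,d\nu\leq \max_X b$ for any $\nu\in\mathcal{M}(X;\phi_H)$, and the middle integral vanishes), extend the linear functional $a\cdot 1+b\mapsto a$ defined on $\R\cdot 1+B$ to a linear functional on $C(X)$ dominated by the sublinear functional $\varphi\mapsto\max_X\varphi$. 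The extension is automatically positive, normalized, and annihilates $B$, so by Riesz representation it is given by an element of $\mathcal{M}(X;\phi_H)$ realizing the supremum on the left.

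Finally, as $f$ varies over $C^\infty(\Sigma_k)$ the 1-form $\eta_0+df$ exhausts, via de~Rham, all smooth representatives of $c\in H^1(\Sigma_k;\R)$, and $g+X_Hf=(\eta_0+df)(X_H)$; hence the right-hand side of the duality equals $\inf_{\eta\in c}\max_X\eta(X_H)=pb^+_{H,\Sigma_k}(c;X)\geq\epsilon$. Combining with weak-$*$ compactness of $\mathcal{M}(X;\phi_H)$, the supremum is attained by some $\mu\in\mathcal{M}(X;\phi_H)\subset\mathcal{M}(\Sigma_k;\phi_H)$, which has $\supp(\mu)\subset X$ and satisfies $\langle c,\rho(\mu)\rangle\geq\epsilon$, as required. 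The principal technical hurdle will be the Hahn--Banach step above: I must check that $1\notin B$ (so the initial functional is well-defined, which is forced by $\mathcal{M}(X;\phi_H)\neq\emptyset$), that the domination by $\max_X$ is valid on all of $\R\cdot 1+B$, and that positivity plus normalization of the extension translates into a genuine probability measure via Riesz.
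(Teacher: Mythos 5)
Your overall strategy --- deducing the theorem from the ergodic--optimization duality
\[
\sup_{\mu\in\mathcal{M}(X;\phi_H)}\int g\,d\mu
\;=\;
\inf_{f\in C^\infty(\Sigma_k)}\max_{X}\bigl(g+X_Hf\bigr),
\]
where $g=\eta_0(X_H)$ for a fixed $\eta_0\in c$ --- is sound, and all the peripheral observations (compactness and invariance of $X$, that $\int g\,d\mu=\langle c,\rho(\mu)\rangle$ is independent of the chosen representative, that $\{g+X_Hf\}$ exhausts $\{\eta(X_H):\eta\in c\}$, and that $\mu\in\mathcal{M}(X;\phi_H)$ iff $\int X_Hf\,d\mu=0$ for all $f\in C^\infty(\Sigma_k)$) are correct. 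Note that the paper does not supply its own proof: Theorem \ref{thm02} is imported from \cite{Polterovich14}, so you are on your own here; your route via a direct separation argument in $C(X)$ is a legitimate (and rather clean) alternative to citing Polterovich's machinery.

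However, there is a genuine gap in your Hahn--Banach step. You define $\ell(a\cdot 1+b)=a$ on the subspace $\R\cdot 1+B$, extend it to $\Lambda\leq\max_X$ on $C(X)$, and correctly deduce that $\Lambda$ is a positive normalized functional annihilating $B$, hence equals $\int\cdot\,d\mu$ for some $\mu\in\mathcal{M}(X;\phi_H)$. But this construction imposes no constraint on $\Lambda(g)$: the only a priori bound one can extract is
\[
\Lambda(g)\;\geq\;-\inf_{b\in B}\max_X(-g+b)\;=\;\sup_{f}\min_X(g+X_Hf),
\]
which is in general strictly smaller than $pb^+_{H,\Sigma_k}(c;X)=\inf_f\max_X(g+X_Hf)$. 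So ``realizing the supremum on the left'' does not follow, and the conclusion $\langle c,\rho(\mu)\rangle\geq\epsilon$ is not yet established.

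The fix is to put $g$ itself into the subspace. Set $\beta:=pb^+_{H,\Sigma_k}(c;X)=\inf_{b\in B}\max_X(g+b)$ and define $\ell$ on $W:=\R g+B$ by $\ell(\alpha g+b)=\alpha\beta$. Well-definedness is a non-issue unless $g\in B$, in which case $\beta=0$ and $\ell\equiv 0$ is consistent. The domination $\ell\leq\max_X$ on $W$ splits into three cases: for $\alpha>0$, $\max_X(\alpha g+b)=\alpha\max_X(g+\alpha^{-1}b)\geq\alpha\beta$ by definition of the infimum; for $\alpha=0$, this is your observation $\max_X b\geq 0$; for $\alpha<0$, it reduces to $\beta\geq\sup_{f}\min_X(g+X_Hf)$, which holds because for any $f,f'$ and any $\nu\in\mathcal{M}(X;\phi_H)$ one has $\min_X(g+X_Hf')\leq\int g\,d\nu\leq\max_X(g+X_Hf)$, and $\mathcal{M}(X;\phi_H)\neq\emptyset$ by Krylov--Bogolyubov. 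Now the Hahn--Banach extension $\Lambda$ satisfies $\Lambda\leq\max_X$, is positive (as you argue), satisfies $\Lambda(1)=1$ (by applying the domination to $\pm 1$), annihilates $B$ (apply to $\pm b$), and crucially $\Lambda(g)=\beta\geq\epsilon$. Riesz representation then produces the desired $\mu\in\mathcal{M}(X;\phi_H)\subset\mathcal{M}(\Sigma_k;\phi_H)$ with $\supp\mu\subset X$ and $\langle c,\rho(\mu)\rangle=\int g\,d\mu\geq\epsilon$. With this modification your proof is complete.
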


The following proof is a small variation of the proof of the main technical result (Proposition 5.1) in Entov-Polterovich's \cite{EntovPolterovich17}.

\begin{proof}[Proof of Theorem \ref{thm00}]
	Recall that we consider $c\in H^1(L;\R)$ and $k\in \R$ such that 
	\[
	k\leq \kappa_{H:L}(c).
	\]
	Fix now 
	\[
	a\in \sh(\Sigma_k;L,c)\subset H^1(\Sigma_k;\R).
	\] 
	This means (by definition) that there exists a Lagrange isotopy $\psi:[0,1]\times L \to M$, starting at $L$, which satisfies
	\begin{equation}
		\label{eq02}
		L_1:=\psi_1(L)\subset \Sigma_k \quad \text{and} \quad \Flux_L(\psi)-c=\psi_1^*a \in H^1(L;\R).
	\end{equation}
	Choose $\delta>0$ so small that $\max_{L_1}(H)+\delta<k-\delta$. For every $\epsilon >0$ we can find a smooth function $u:\R \to \R$ such that 
	\[
	u(t)=\left\{
	\begin{array}{ll}
	\max_{L_1}(H)+\delta,& \text{if} \ t\leq \max_{L_1}(H)+\delta \\
	k-\delta,& \text{if} \ t\geq k-\delta,
	\end{array}
	\right.
	\]
	and $0\leq u'(t)\leq 1+\epsilon$. Consider now the Hamiltonian 
	\[
	F:=u\circ H -(k-\delta). 
	\]
	Setting $X:=\{H\leq k-\delta\}$ it suffices (by Theorem \ref{thm02}) to show that 
	\[
	pb^+_{H,\Sigma_k}(-a,X)\geq \Delta:=k-\max_{L_1}(H)-2\delta>0.
	\] 
	Since $\max_X\{F,\eta\}\leq (1+\epsilon)\max_X\{H,\eta\}$ for every 1-form $\eta$ it suffices (by letting $\epsilon \downarrow 0$) to show that $pb^+_{F,\Sigma_k}(-a,X)\geq \Delta$. To see that this inequality holds we choose a closed $1$-form $\eta$ on $\Sigma_k$ with $[\eta]=-a$. Our task is to show that $\max_{X}\{F,\eta\}  \geq \Delta$. Suppose for contradiction that this were not the case, i.e. that 
	\[
	\max_{X}\{F,\eta\}  < \Delta.
	\]
	Exactly as in \cite{EntovPolterovich17} we then have that the closed $2$-form  
	\[
	\omega_s:=\omega-sdF\wedge \eta
	\]
	is symplectic for all $s\in [0,T]$, where $T:=\frac{1}{\Delta}$. Note that $\omega_s$ is a well-defined 2-form on all of $M$ because $dF=0$ on $M\backslash X$. Hence, we can define a family of vectorfields $(Y_s)_{s\in [0,T]}$ by requiring
	\begin{equation}
		\label{eq01}
		\iota_{Y_s}\omega_s =F\eta.
	\end{equation}
	Since $F$ is compactly supported we see that $Y_s$ is compactly supported, so it integrates to an isotopy $(\rho_s)_{s\in [0,T]}$ which by Moser's argument \cite{Moser65} satisfies $\rho_s^*\omega_s=\omega$ for all $s\in [0,T]$. Since $L_1\subset (M,\omega)$ is Lagrangian and $F$ is constant on $L_1$ we have $\omega_s|_{L_1}\equiv 0$, so $\rho_s^{-1}(L_1)\subset (M,\omega)$ is also Lagrangian for all $s\in [0,T]$. Hence, we obtain a Lagrange isotopy $\tilde{\psi}:[0,1]\times L \to M$ by defining
	\[
	\tilde{\psi}_t:= \left\{
	\begin{array}{ll}
	\psi_{2t}, & \text{if} \ t\in [0,\tfrac{1}{2}] \\
	\rho_{(2t-1)T}^{-1}\circ \psi_1, & \text{if} \ t\in [\tfrac{1}{2},1],
	\end{array}
	\right.
	\]
	which starts at $L$ and is smooth after a reparametrisation. Now note that because $\supp(F)\subset X$ it follows from (\ref{eq01}) that $\supp(Y_s)\subset X$ for all $s\in [0,T]$. In particular $\rho_s|_{M\backslash X}=\id_{M\backslash X}$ for all $s\in [0,T]$. Since $L_1\subset X$ it follows that $\tilde{\psi}_1(L)=\rho_{T}^{-1}(L_1)\subset X$, meaning  
	\begin{equation}
		\label{eq04}
		\max_{\tilde{\psi}_1(L)}H\leq k-\delta.
	\end{equation}
	Since the family of vectorfields $(Z_s)_{s}$ generating $(\rho_s^{-1})_s$ is given by 
	\[
	Z_s(x)=-d\rho_s^{-1}(\rho_s(x))\cdot Y_s(\rho_s(x))
	\]
	it is easy to check that 
	\[
	\omega(Z_s,v)=-\rho_s^*(F\eta)(v) \quad \forall \ v\in T_xM.
	\]
	From $F|_{L_1}\equiv -\Delta$ it now follows that 
	\[
	\Flux_{L_1}((\rho^{-1}_{s})_{s\in [0,T]})=\int_0^T\Delta [\eta|_{L_1}]ds=-a|_{L_1}\in H^1(L_1;\R),
	\]
	and thus (using (\ref{eq02}))
	\[
	\Flux_L(\tilde{\psi})=\Flux_L(\psi)-\psi_1^*a=c.
	\]
	In particular we can estimate
	\[
	\max_{\tilde{\psi}_1(L)}H\geq \kappa_{H:L}(c)\geq k.
	\]
	But this contradicts (\ref{eq04}). This contradiction shows that $\max_{M}\{F,\eta\}  \geq \Delta$ which by Theorem \ref{thm02} finishes the proof of the existence of $\mu$. In order to see that $\mu$ can be chosen to be ergodic we consider the function 
	\begin{align*}
		R:\mathcal{M}(X;\phi_H)&\to \R,\quad \nu \mapsto R(\nu):=\langle a,\rho(\nu)\rangle.
	\end{align*}
	$\Image(R)$ is a closed interval: $\Image(R)=[A,B]$. The above proof shows that $A<0$. By \cite[Lemma 2.2.1]{BiranPolterovichSalamon03} there exists an ergodic $\nu \in \mathcal{M}(X;\phi_H)$ such that $R(\nu)=A$. Since $\mathcal{M}(X;\phi_H)\subset \mathcal{M}(\Sigma_k;\phi_H)$ this finishes the proof of Theorem \ref{thm00}. 
\end{proof}

\begin{rem}
Combining the proof of Theorem \ref{thm02} and the above proof, it is easy to deduce that the constructed measure $\mu$ satisfies $H(\supp(\mu))\subset [\max_{L_1}H+\delta,k-\delta]$.
\end{rem}

\subsection{Proofs of results from Section \ref{MatherFloer}}
\label{secproof2}

\begin{proof}[Proof of Corollary \ref{lem1}]
We first prove that $\sigma_{H:L}$ descends to a Lipschitz function 
\[
\alpha_{H:L}:H^1(L;\R) \to \R
\]
under the assumption that $r_L:H^1(M;\R) \to H^1(L;\R)$ is onto. Suppose $c,c' \in H^1(M;\R)$ satisfy $r_L(c)=r_L(c')\in H^1(L;\R)$. Fix a closed 1-form $\eta$ on $M$ with $[\eta]=c-c'$ and denote by $\psi=\{\psi_t\}_{t\in \R}$ the symplectic isotopy generated by the vector field $X_{\eta}$ characterised by
\[
\iota_{X_{\eta}}\omega=\eta.
\]
By property c) of Theorem \ref{prop1} we can now compute
\[
\sigma_{H:L}(c)=\sigma_{H:L}(c'+(c-c'))=\sigma_{H:\psi_1(L)}(c').
\]
We can view $\psi$ as a Lagrange isotopy starting at $L$
\begin{equation}
\label{eq27}
\psi|_L:[0,1]\times L \to M,
\end{equation}
with \emph{Lagrangian Flux path} $s\mapsto \Flux_L(\{ \psi_t|_L\}_{0\leq t\leq s})\in H^1(L;\R)$. We compute that 
\[
\Flux_L(\{ \psi_t|_L\}_{0\leq t\leq s})=\int_0^s r_L[\iota_{X_{\eta}}\omega] dt=r_L(c-c')s=0\in H^1(L;\R)
\]
for all $s\in [0,1]$. Hence, (\ref{eq27}) is an exact Lagrange isotopy, so there exists $\phi \in \Ham(M,\omega)$ such that $\phi(L)=\psi_1(L)$. Applying property d) of Theorem \ref{prop1} we conclude that $\sigma_{H:\psi_1(L)}(c')=\sigma_{H:\phi(L)}(c')=\sigma_{H:L}(c')$. It follows that, under the assumption that $r_L:H^1(M;\R) \to H^1(L;\R)$ is onto, $\alpha_{H:L}:H^1(L;\R) \to \R$,
\[
\alpha_{H:L}(r_L(c)):=\sigma_{H:L}(c), \quad c\in H^1(M;\R)
\]
is well-defined. Clearly $\alpha_{H:L}$ is Lipschitz. The second part of Corollary \ref{lem1} follows from Theorem \ref{thm1} if only we prove that 
\[
\partial \sigma_{H:L}(c)=i_L(\partial \alpha_{H:L}(r_L(c)))\quad \forall \ c\in H^1(M;\R),
\]
which can be seen as follows.\footnote{I am grateful to the anonymous referee whose careful advice significantly simplified this part of the proof.}
Note first that, under the canonical identification $H_1(-;\R)\cong H^1(-;\R)^*$ this equality can be written 
\[
\partial \sigma_{H:L}(c)=r_L^*(\partial \alpha_{H:L}(r_L(c)))\quad \forall \ c\in H^1(M;\R).
\]
The "$\supset$" inclusion is not hard to see: By definition, we have $h'\in \partial \alpha_{H:L}(r_L(c))$ if and only if 
\[
\forall \ c'\in H^1(L;\R): \ \langle c',h'\rangle \leq \alpha_{H:L}^{\circ}(r_L(c),c'). 
\]
In particular, if $h'\in \partial \alpha_{H:L}(r_L(c)))$ then
\[
\forall \ c'\in H^1(M;\R): \ \langle c',r_L^*(h')\rangle=\langle r_L(c'),h'\rangle \leq \alpha_{H:L}^{\circ}(r_L(c),r_L(c'))=\sigma_{H:L}^{\circ}(c,c'),
\]
where in the last equality we use that $r_L$ is linear. This shows 
\[
\partial \sigma_{H:L}(c)\supset r_L^*(\partial \alpha_{H:L}(r_L(c))).
\]
For the "$\subset$" inclusion, note that 
\[
h\in \partial \sigma_{H:L}(c)\Rightarrow \left( \forall \ c'\in \ker(r_L): \langle c',h \rangle \leq \sigma^{\circ}_{H:L}(c,c')= \alpha_{H:L}^{\circ}(r_L(c),0)=0 \right).
\]
I.e. $\partial \sigma_{H:L}(c)\subset \ker(r_L)^{\perp}$, where the latter denotes the annihilator of $\ker(r_L)$. Since the image of $r^*_L$ coincides with the annihilator of $\ker(r_L)$ (because $H^1(M;\R)$ and $H^1(L;\R)$ are finite dimensional), this is equivalent to $\partial \sigma_{H:L}(c)\subset \Image(r_L^*)$. I.e. if $h\in \partial \sigma_{H:L}(c)$ then $h=r_L^*(h')$ for some $h'\in H^1(L;\R)^*$ and 
\[
\forall \ c'\in H^1(M;\R): \langle r_L(c'),h' \rangle = \langle c',h \rangle \leq \sigma_{H:L}^{\circ}(c,c')=\alpha_{H:L}^{\circ}(r_L(c),r_L(c')).
\]
Since $r_L$ is assumed surjective we conclude that $h'\in \partial \alpha_{H:L}(r_L(c))$, which shows 
\[
\partial \sigma_{H:L}(c)\subset r_L^*(\partial \alpha_{H:L}(r_L(c))).
\]
\end{proof}

\begin{proof}[Proof of Corollary \ref{cor2}]
Consider the path $\gamma:[0,1]\to H^1(L;\R)$ given by $\gamma(t)=tc$. By Lebourg's mean value theorem (see Section \ref{secmeas}) there exists $s\in (0,1)$ and $h\in \partial \alpha_{H:L}(\gamma(s))$ such that 
\[
\langle c,h\rangle=\alpha_{H:L}(c)-\alpha_{H:L}(0).
\]
The existence of $\mu \in \mathcal{M}(\phi_H)$ with $\rho(\mu)=i_L(h)$ now follows from Corollary \ref{lem1}. To get the estimate note that, given a Lagrange isotopy $\psi:[0,1]\times L\to M$ with $\psi_0$ the inclusion and $\Flux_{L}(\psi)=c$ there exists (by a result due to Solomon \cite[Lemma 6.6]{Solomon13}) a symplectic isotopy $\tilde{\psi}:[0,1]\times M \to M$ such that $\tilde{\psi}_t(L)=\psi_t(L)$ and such that $r_L(\Flux(\tilde{\psi}))=c$. Using property b) of Theorem \ref{prop1} we can therefore estimate 
\[
\langle c,h\rangle \geq \min_{x\in \tilde{\psi}_1(L)}H(x)-\max_{x\in L}H(x)=\min_{x\in \psi_1(L)}H(x)-\max_{x\in L}H(x).
\]
\end{proof}

\begin{proof}[Proof of Lemma \ref{lemmat1}]
Recall that we need to show $\alpha_{H_{\epsilon},T(0,0)}(I,0)=0$ for $I\in \R$, where 
\[
H_{\epsilon}(\theta,I)=H_0(I)+\epsilon F(\theta,I),
\]
for 
\[
H_0(I)=I_1I_2 \quad \& \quad F(\theta,I)=\varphi(I_1)\sin(2\pi \theta_1).
\]
By Theorem \ref{prop1} we have 
\[
\alpha_{H_{\epsilon},T(0,0)}(I_1,I_2)=\alpha_{H_{\epsilon},T(I_1,I_2)}(0,0) \quad \forall \ (I_1,I_2)\in \R^2.
\]
Now fix $I\in \R$. We need to show that $\alpha_{H_{\epsilon},T(I,0)}(0,0)=0$. We will think of $\T^2\times \R^2=(S^1\times \R)\times (S^1\times \R)$ with coordinates $(\theta_1,I_1)$ on the first factor and $(\theta_2,I_2)$ on the second. Choose an embedded circle $S\subset (S^1\times \R)$ such that $S$ is Hamiltonian isotopic to $\{I_1=I\}$ and 
\[
S\cap \{I_1\leq K+2\}=\{(\theta_1,I_1)\ |\ \theta_1\in \{ 0,\tfrac{1}{2}\}\  \& \ |I_1|\leq K+2 \}.
\]
Then there exists $\phi \in \Ham(\T^2 \times \R^2,d\lambda)$ such that $\phi(T(I,0))=S\times \{I_2=0\}\subset (S^1\times \R)\times (S^1\times \R)$ and applying Theorem \ref{prop1} gives
\[
\alpha_{H_{\epsilon},T(I,0)}(0,0)=\alpha_{H_{\epsilon},S\times \{I_2=0\}}(0,0).
\]
Since $H_{\epsilon}|_{S\times \{I_2=0\}}\equiv 0$ for all $\epsilon \geq 0$ it follows from point b) of Theorem \ref{prop1} that $\alpha_{H_{\epsilon},S\times \{I_2=0\}}(0,0)=0$ for all $\epsilon \geq 0$. This finishes the proof.
\end{proof}

The next several pages take up the proof of Theorem \ref{thm1}. First let's consider the setup: Set $m=\dim_{\R}H^1(M;\R)$ and fix once and for all a basis $e_1,\ldots ,e_m$ for $H^1(M;\R)$. Choose closed one forms $\eta_1,\ldots ,\eta_m$ on $M$ such that $e_l=[\eta_l]$ and define vector fields $X_{\eta_1},\ldots X_{\eta_m}$ by requiring
\[
\iota_{X_{\eta_l}}\omega=\eta_l \quad \forall \ l=1,\ldots ,m.
\] 
Denote the symplectic isotopy generated by $X_{\eta_l}$ by $\psi^l=\{ \psi_t^l \}_{t\in \R}$. For each $k\in \N$ we define a function $a_k:H^1(M;\R)\to \R$ by
\[
a_k(c)=\frac{l_L(\psi^m_{-\kappa_m} \cdots \psi^1_{-\kappa_1} \tilde{\phi}_H^k\psi^1_{\kappa_1} \cdots \psi^m_{\kappa_m})}{k}
\]
if $c=\sum_{l=1}^{m}\kappa_le_l$. The following lemma is due to Vichery \cite{Vichery14}. In \cite{Vichery14} the case of the zero-section in a cotangent bundle is considered. But in the general case the same proof applies.  

\begin{lemma}[\cite{Vichery14}]
\label{lemVichery}
The sequence $(a_k)_{k\in \N}$ converges uniformly to $\sigma_{H:L}$ on compact subsets.
\end{lemma}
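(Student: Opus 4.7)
The plan is to combine pointwise convergence of $a_k(c) \to \sigma_{H:L}(c)$ with a $k$-independent (equi-)Lipschitz bound on the sequence $\{a_k\}$ over compact subsets of $H^1(M;\R)$. Once these are in hand, uniform convergence on a compact $K$ follows from a standard $\varepsilon/3$ argument, using the local Lipschitz continuity of $\sigma_{H:L}$ from Theorem \ref{prop1}(a): cover $K$ by finitely many balls of radius $\varepsilon/(3C_K)$; pointwise convergence at the finitely many centers, combined with the Lipschitz bounds, then controls $|a_k - \sigma_{H:L}|$ uniformly on $K$ for $k$ large.

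For pointwise convergence, fix $c = \sum_{l=1}^m \kappa_l e_l \in H^1(M;\R)$. The concatenation $\Psi_c := \psi^1_{\kappa_1}\cdots \psi^m_{\kappa_m}$, viewed as the time-$1$ map of the concatenated symplectic isotopy starting at $\id$, has flux $\sum_l \kappa_l e_l = c$ by additivity of flux. Hence $a_k(c) = l_L(\Psi_c^{-1}\tilde{\phi}_H^k\Psi_c)/k$ converges to $\sigma_{H:L}(c)$ directly from Definition \ref{def2} (equation (\ref{eq2})). One can also see the limit exists by Fekete's lemma, since
\begin{equation*}
\Psi_c^{-1}\tilde{\phi}_H^{k+l}\Psi_c = (\Psi_c^{-1}\tilde{\phi}_H^k\Psi_c)(\Psi_c^{-1}\tilde{\phi}_H^l\Psi_c)
\end{equation*}
together with the triangle inequality for $l_L$ makes $k \mapsto l_L(\Psi_c^{-1}\tilde{\phi}_H^k\Psi_c)$ subadditive.

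For the equi-Lipschitz estimate, the crucial algebraic identity is
\begin{equation*}
\Psi_{c'}^{-1}\tilde{\phi}_H^k\Psi_{c'} = \chi^{-1}(\Psi_c^{-1}\tilde{\phi}_H^k\Psi_c)\chi, \qquad \chi := \Psi_c^{-1}\Psi_{c'},
\end{equation*}
which expresses the dependence of $a_k$ on $c'$ purely through conjugation by the symplectic isotopy $\chi$, which has flux $c'-c$ and ``size'' linear in $\|c-c'\|$ independently of $k$. Combined with the naturality of the Leclercq-Zapolsky spectral invariants under conjugation by symplectic isotopies (established in \cite{LeclercqZapolsky15} and exploited in \cite{ZapolskyMonznerVichery12}), this yields a $k$-independent bound for $|l_L(\Psi_{c'}^{-1}\tilde{\phi}_H^k\Psi_{c'}) - l_L(\Psi_c^{-1}\tilde{\phi}_H^k\Psi_c)|$ which is linear in $\|c-c'\|$ with a constant depending only on $K$; dividing by $k$ then gives the equi-Lipschitz estimate.

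The main obstacle is precisely this equi-Lipschitz bound: a direct Hofer-norm estimate on the Hamiltonian generating $\Psi_c^{-1}\tilde{\phi}_H^k\Psi_c$ would produce a Lipschitz constant growing linearly in $k$, which becomes useless after dividing by $k$. The $k$-independent control is obtained by using the conjugation-invariance of $l_L$ to transfer the $c$-dependence from the Hamiltonian onto the reference Lagrangian, replacing $L$ by the nearby $\Psi_c(L)$; this is precisely the mechanism exploited by Vichery in \cite{Vichery14}.
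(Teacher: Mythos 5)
Your overall plan---pointwise convergence plus a $k$-uniform Lipschitz bound on $\{a_k\}$, then an Arzel\`a--Ascoli / $\varepsilon/3$ argument---is the right skeleton, and the pointwise part (flux additivity giving $\Flux(\Psi_c)=c$, hence $a_k(c)\to\sigma_{H:L}(c)$ directly from Definition~\ref{def2}, with Fekete's lemma for existence of the limit) is fine. Note also that the paper itself gives no proof of this lemma; it simply cites \cite{Vichery14} and asserts the argument carries over verbatim, so you are being asked to reconstruct Vichery's estimate.

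The problem is in the equi-Lipschitz step, where you dismiss the one argument that works. You write that ``a direct Hofer-norm estimate on the Hamiltonian generating $\Psi_c^{-1}\tilde{\phi}_H^k\Psi_c$ would produce a Lipschitz constant growing linearly in $k$, which becomes useless after dividing by $k$.'' This is exactly backwards. The element $\Psi_c^{-1}\tilde{\phi}_H^k\Psi_c$ is generated by the normalized Hamiltonian $kH\circ\Psi_c$, so the Lipschitz property of $l_L$ from \cite{LeclercqZapolsky15} gives
\[
\bigl|l_L(\Psi_c^{-1}\tilde{\phi}_H^k\Psi_c) - l_L(\Psi_{c'}^{-1}\tilde{\phi}_H^k\Psi_{c'})\bigr|
\le k\,\|H\circ\Psi_c - H\circ\Psi_{c'}\|_{C^0}.
\]
Since $H$, $\eta_1,\dots,\eta_m$ are fixed, $\|H\circ\Psi_c-H\circ\Psi_{c'}\|_{C^0}\le C_K\|c-c'\|$ for $c,c'$ in a compact $K$ with $C_K$ \emph{independent of $k$}; dividing by $k$ then kills the $k$ on the right and yields exactly the desired equi-Lipschitz bound. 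The linear-in-$k$ growth is not a defect, it is precisely what must cancel against the $1/k$ in the definition of $a_k$.

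The conjugation-based alternative you propose in its place has two genuine gaps. First, writing $l_L(\Psi_{c'}^{-1}\tilde{\phi}_H^k\Psi_{c'}) = l_{\chi(L)}(\Psi_c^{-1}\tilde{\phi}_H^k\Psi_c)$ with $\chi=\Psi_c^{-1}\Psi_{c'}$ forces you to compare spectral invariants over $L$ and $\chi(L)$, but $\chi$ has flux $c'-c$, so $\chi(L)$ is in general \emph{not} Hamiltonian isotopic to $L$. There is no off-the-shelf Lipschitz estimate for $|l_{L'}(\tilde\phi)-l_L(\tilde\phi)|$ when $L,L'$ differ by a non-Hamiltonian symplectic isotopy---and that is precisely what would be needed. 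Second, even setting that aside, your stated conclusion---a $k$-independent bound on $|l_L(\Psi_{c'}^{-1}\tilde{\phi}_H^k\Psi_{c'}) - l_L(\Psi_c^{-1}\tilde{\phi}_H^k\Psi_c)|$ that is linear in $\|c-c'\|$---cannot hold: dividing by $k$ and letting $k\to\infty$ would force $\sigma_{H:L}(c')=\sigma_{H:L}(c)$ for all $c,c'$, contradicting the fact that $\sigma_{H:L}$ is non-constant (e.g.\ for Tonelli $H$ it is the superlinear Mather $\alpha$-function). So that claim is not merely unproven, it is false.

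In short: keep your $\varepsilon/3$ scaffolding and the pointwise step, but replace the conjugation detour with the direct Hofer estimate on $kH\Psi_c$ versus $kH\Psi_{c'}$; the $k$-independence falls out automatically because the $k$ in the Hofer norm cancels the $1/k$ in $a_k$.
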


\begin{prop}
\label{prop2}
Let $h\in H_1(M;\R)$ be a Dini subdifferential of $a_k$ at the point $c=\sum_{l=1}^{m}\kappa_l e_l\in H^1(M;\R)$.\footnote{See (\ref{eq1}).} We associate to any $c'=\sum_{l=1}^{m}\kappa'_l e_l\in H^1(M;\R)$ the symplectic isotopy 
\begin{equation}
\label{eq11}
\psi_{\tau}:=\psi^1_{\kappa_1+\tau \kappa'_1}  \cdots  \psi^m_{\kappa_m+\tau \kappa'_m}
\end{equation}
and denote by $X_{\tau}(\psi_{\tau}(x)):=\left. \frac{d}{ds}\right|_{s=0}\psi_{s+\tau}(x)$ its infinitesimal generator. Then there exists a point $x\in \psi_0(L)$ such that $\phi_H^k(x)\in \psi_0(L)$ and  
\begin{equation}
\label{eq12}
\langle c',h \rangle \leq \frac{1}{k}\int_{0}^{k}\langle dH,X_0\rangle \phi_H^t(x)\ dt.
\end{equation}
Moreover, the curve $\gamma:([0,k],\{0,k\})\to (M,L)$ given by $\gamma(t)=\phi_H^t(x)$ represents the trivial element in $\pi_1(M,\psi_0(L))$ and there exists a capping $\widehat{\gamma}$ of $\gamma$ such that
\begin{align}
\label{eq5}
a_k(c)&=\frac{1}{k}\int_0^kH(\gamma(t))\ dt-\frac{1}{k}\int \widehat{\gamma}^*\omega.
\end{align} 

\end{prop}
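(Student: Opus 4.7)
The plan is as follows. First, following standard arguments about Leclercq-Zapolsky spectral invariants under conjugation (cf.\ \cite{ZapolskyMonznerVichery12}), I would identify
\[
k \cdot a_k(c + \tau c') = l_L\bigl( \psi_\tau^{-1}\tilde{\phi}_H^k \psi_\tau \bigr)
\]
as the spectral value of the action functional
\[
\mathcal{A}_\tau(\widetilde{\gamma}) = \int_0^1 K_\tau(\gamma(s))\, ds - \int \widehat{\gamma}^*\omega
\]
on chords $\gamma:([0,1],\{0,1\})\to (M,L)$, where $K_\tau := k\,H\circ \psi_\tau$ is the autonomous Hamiltonian generating the time-$1$ map $\psi_\tau^{-1}\phi_H^k\psi_\tau$. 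The key structural point is that only $K_\tau$ depends on $\tau$; the Lagrangian boundary condition $L$ and the capping space are fixed.

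After perturbing $H$ slightly to arrange $\phi_H^k(L)\pitchfork L$ (and passing back to the original $H$ via the $C^0$-continuity of $l_L$), the spectrality of the Lagrangian spectral invariant yields a non-degenerate critical chord $\widetilde{\gamma}_0 = [\gamma_0,\widehat{\gamma}_0]$ of $\mathcal{A}_0$ achieving $\mathcal{A}_0(\widetilde{\gamma}_0) = k\,a_k(c)$ and appearing as the action-maximal generator in some Floer cycle representing $\PSS([L])$. Setting $x := \psi_0(\gamma_0(0))\in \psi_0(L)$ and $\widehat{\gamma} := \psi_0(\widehat{\gamma}_0)$, the chord $\gamma(t):=\phi_H^t(x)$, $t\in [0,k]$, satisfies $\gamma(0),\gamma(k)\in \psi_0(L)$. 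A change of variable $t = ks$ in the definition of $\mathcal{A}_0(\widetilde{\gamma}_0)$, combined with the symplecticity of $\psi_0$ (which preserves capping areas), immediately yields (\ref{eq5}). The mere existence of the capping disk also shows that $\gamma$ represents the trivial element of $\pi_1(M,\psi_0(L))$.

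Next, the implicit function theorem at the non-degenerate $\widetilde{\gamma}_0$ produces a smooth family $\widetilde{\gamma}_\tau$ of critical points of $\mathcal{A}_\tau$; the Floer continuation morphism transports the representing cycle of $\PSS([L])$ to parameter $\tau$, and by continuity $\widetilde{\gamma}_\tau$ remains its action-maximal generator for $|\tau|$ small, giving
\[
k\,a_k(c+\tau c')\leq \mathcal{A}_\tau(\widetilde{\gamma}_\tau) \quad\text{for all small } |\tau|.
\]
At a critical point the first-variation formula kills the internal variation of $\widetilde\gamma_\tau$, leaving only the $\tau$-derivative of $K_\tau$:
\[
\frac{d}{d\tau}\bigg|_{\tau=0}\mathcal{A}_\tau(\widetilde{\gamma}_\tau) = \int_0^1 \partial_\tau K_\tau\big|_{\tau=0}(\gamma_0(s))\, ds = \int_0^1 k\langle dH,X_0\rangle(\psi_0\gamma_0(s))\, ds = \int_0^k \langle dH,X_0\rangle\phi_H^t(x)\, dt,
\]
where the identity $\partial_\tau K_\tau|_{\tau=0} = k\,\langle dH,X_0\rangle\circ \psi_0$ follows from the chain rule and the definition of $X_0$.

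Finally, combining these and dividing by $\tau > 0$, the definition of the Dini subdifferential gives
\[
\langle c', h\rangle \leq d^-a_k(c; c') \leq \frac{1}{k}\cdot\frac{d}{d\tau}\bigg|_{\tau=0}\mathcal{A}_\tau(\widetilde{\gamma}_\tau) = \frac{1}{k}\int_0^k \langle dH, X_0\rangle\phi_H^t(x)\, dt,
\]
which is (\ref{eq12}). The main obstacle is the spectrality/transversality step: realizing $k\,a_k(c)$ by a non-degenerate critical chord that remains the action-maximal generator in its representing Floer cycle after deformation in $\tau$, so that no other chord overtakes it and spoils the upper bound. This is handled by a generic perturbation of $H$ and a Gromov compactness argument, with the conclusion for the original Hamiltonian recovered from the $C^0$-continuity of $l_L$.
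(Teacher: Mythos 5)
Your formal Danskin/envelope-type computation of the $\tau$-derivative is correct, and the identity $\partial_\tau K_\tau|_{\tau=0} = k\langle dH,X_0\rangle\circ\psi_0$ together with the change of variable $t=ks$ reproduces exactly the integrand in (\ref{eq12}). The gap lies in the step
\[
k\,a_k(c+\tau c')\leq \mathcal{A}_\tau(\widetilde{\gamma}_\tau)\quad \text{for all small } |\tau|,
\]
which you justify by asserting that "the Floer continuation morphism transports the representing cycle" and that $\widetilde{\gamma}_\tau$ "remains its action-maximal generator" by continuity. This is not how Floer continuation works: the map $\Phi_\tau\colon CF_*(L{:}K_0,J)\to CF_*(L{:}K_\tau,J')$ is a chain map defined by counting finite-energy strips, and it does \emph{not} carry the smooth IFT-deformation $\widetilde{\gamma}_0\mapsto\widetilde{\gamma}_\tau$ on generators. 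What the continuation map does provide is the \emph{coarse} action estimate $\mathcal{A}_{K_\tau}(u_\infty)\leq \mathcal{A}_{K_0}(u_{-\infty})+\int\!\!\int\partial_s K_s\,dt\,ds$, i.e. the supremum of $\partial_\tau K_\tau$ over all of $M$ rather than its value along the particular chord $\widetilde{\gamma}_0$; that weaker estimate does not yield (\ref{eq12}).

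The paper's proof circumvents this by working directly with a single Floer continuation trajectory $u^\tau$ satisfying action bounds on both ends (using Usher's cycle at $\tau=0$ and the definition of the spectral invariant at $\tau$), and then observing via the energy identity that $E(u^\tau)\leq 2k\max|H\psi_\tau-H\psi_0|+\tau^2\to 0$. Gromov--Hofer compactness then forces a subsequence of the $u^{\tau_\varsigma}$ to $C^\infty_{\mathrm{loc}}$-converge to an $s$-independent chord $u(t)=\phi_{kH\psi_0}^t(y)$, and dominated convergence identifies the limit of the action-shift term with the integral along \emph{that} chord. This produces the sharp, chord-local bound and the capping in (\ref{eq5}) simultaneously. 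The key point your proposal misses is that it is the vanishing of the Floer energy in the limit $\tau\downarrow 0$, not an implicit-function-theorem deformation of a single critical chord, that localizes the estimate at a specific $x\in\psi_0(L)$; without it one only gets a uniform bound over $M$. A second, smaller gap: after perturbing $H$ to achieve transversality, you say you "pass back to the original $H$ via the $C^0$-continuity of $l_L$," but the conclusion of the proposition also requires producing the point $x$ and the capping $\widehat\gamma$ for the original $H$, which needs a separate compactness argument on the approximating points (carried out in the paper by taking $H^\varsigma\to kH$ and using $C^\infty_{\mathrm{loc}}$-compactness of the associated Floer data).
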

\begin{proof}
Let us first prove the result under the non-degeneracy assumption
\begin{equation}
\label{eq13}
\phi_{H\psi_0}^k(L)\pitchfork L.
\end{equation}
The Hamiltonian $kH$ generates the isotopy $t\mapsto \phi_H^{tk}$. Choose a generic path $J=\{J_t\}_{t\in [0,1]}$ of $\omega$-compatible almost complex structures so that we have a well-defined Floer chain complex $(CF(L{:}kH\psi_0,J),d)$. Choose a function $\chi \in C^{\infty}(\R;[0,1])$ such that 
\[
\chi(s)=
\left\{
\begin{array}{ll}
0, & \text{if} \ s\leq 0 \\
1, & \text{if} \ s\geq 1 
\end{array}
\right.
\]
and $\chi' \geq 0$. Now fix $\tau >0$ so small that $\phi^k_{H\psi_{\tau}}(L)\pitchfork L$ and consider the $s$-dependent Hamiltonian\footnote{Here and in the rest of the proof one should \emph{not} think of $\tau$ as a continuous variable, but as a fixed parameter.}
\begin{equation}
\label{eq7}
F^{\tau}_s(x)=kH\psi_0(x)+\chi(s)(kH\psi_{\tau}(x)-kH\psi_0(x)).
\end{equation}
There exists a regular homotopy of Floer data $(K_{\tau, s},J^{\tau, s})_{s\in \R}$ which is stationary for $s\notin (0,1)$ such that 
\begin{align}
\label{eq6}
|F^{\tau}-K^{\tau}|_{C^{\infty}(M\times [0,1] \times [0,1])}&\leq \tau^2 \\
|J-J^{\tau, s}|_{C^{\infty}(M\times [0,1])}&\leq \tau^2 \quad \forall \ s\in (0,1), \nonumber
\end{align}
The Floer continuation map 
\[
\Phi: CF_*(L{:}kH\psi_0,J)\to CF_*(L{:}kH\psi_{\tau},J^{\tau,1})
\] 
is defined "by counting" finite-energy solutions $u\in C^{\infty}(\R \times [0,1];M)$ to the problem
\begin{equation}
\label{eq3}
\left\{
\begin{array}{ll}
\partial_su+J_t^{\tau, s}(u)(\partial_tu-X_{K_s}(u))=0 \\
u(\R \times \{0,1\})\subset L,
\end{array}
\right.
\end{equation}
where the \emph{energy of $u$} denotes the non-negative quantity 
\[
E_{J^{\tau}}(u)=\int_{-\infty}^{\infty}\int_0^1 \omega(\partial_su,J^{\tau,s}(u)\partial_su)\ dtds.
\]

By a very general result due to Usher \cite[Theorem 1.4]{Usher08}, there exists a cycle 
\[
\mathfrak{c}\in CF_*(L{:}kH\psi_0,J),
\]
all of whose elements have action $\leq l_L(\psi_0^{-1}\tilde{\phi}_H^k\psi_0)$ and which satisfies $[\mathfrak{c}]=\PSS([L])\in HF_*(L{:}kH\psi_0,J)$.\footnote{See Section \ref{seqprem} for the notation used here.} Since we also have $\Phi(\mathfrak{c})=\PSS([L])$, at least one of the Hamiltonian chords of the cycle $\Phi(\mathfrak{c})$ must have action $\geq l_L(\psi_{\tau}^{-1}\tilde{\phi}_H^k\psi_{\tau})$. We hence deduce the existence of a finite-energy solution $u^{\tau}_s(t)=u^{\tau}(s,t)$ to (\ref{eq3}) such that
\begin{equation}
\label{eq4}
\mathcal{A}_{kH\psi_0:L}(u^{\tau}_{-\infty}) \leq  l_L(\psi_{0}^{-1}\tilde{\phi}_H^k\psi_{0}) \quad \text{and} \quad l_L(\psi_{\tau}^{-1}\tilde{\phi}_H^k\psi_{\tau}) \leq \mathcal{A}_{kH\psi_{\tau}:L}(u^{\tau}_{\infty}).
\end{equation}
Here $u_{-\infty}^{\tau}$ (respectively $u_{\infty}^{\tau}$) denotes one of the elements of the chain $\mathfrak{c}$ (respectively $\Phi(\mathfrak{c})$). Integrating by parts reveals that $u^{\tau}$ satisfies the energy identity 
\begin{align}
\label{eq30}
0\leq E_{J^{\tau}}(u^{\tau})=\mathcal{A}_{kH\psi_0:L}(u^{\tau}_{-\infty})-\mathcal{A}_{kH\psi_{\tau}:L}(u^{\tau}_{\infty})+\int_{-\infty}^{\infty}\int_0^1 \partial_sK_s(u^{\tau}(s,t))dtds.
\end{align}
Using (\ref{eq6}) and (\ref{eq4}), this identity implies the estimate
\begin{align}
\label{eq9}
\frac{a_k(c+\tau c')-a_k(c)}{\tau}&= \frac{l_L(\psi_{\tau}^{-1}\tilde{\phi}_H^k\psi_{\tau})-l_L(\psi_{0}^{-1}\tilde{\phi}_H^k\psi_{0})}{k\tau} \nonumber \\
&\leq \frac{\mathcal{A}_{kH\psi_{\tau}:L}(u^{\tau}_{\infty})-\mathcal{A}_{kH\psi_0:L}(u^{\tau}_{-\infty})}{k\tau} \nonumber \\
&\leq \frac{1}{k\tau}\int_{-\infty}^{\infty}\int_0^1 \partial_sK_s(u^{\tau}(s,t))dtds  \\ 
&\leq  \frac{1}{k\tau}\int_{-\infty}^{\infty}\int_0^1 \partial_sF^{\tau}_s(u^{\tau}(s,t))dtds +\frac{\tau}{k} \nonumber \\
&= \int_{-\infty}^{\infty}\int_0^1 \chi'(s)\left( \frac{H\psi_{\tau}(u^{\tau}(s,t))-H\psi_{0}(u^{\tau}(s,t))}{\tau} \right) dtds +\frac{\tau}{k}. \nonumber
\end{align}
Moreover, we can use the energy identity together with (\ref{eq4}) to estimate the energy of $u^{\tau}$ from above: 
\begin{align}
\label{eq15}
E_{J^{\tau}}(u^{\tau})&\leq  l_L(\psi_{0}^{-1}\tilde{\phi}_H^k\psi_{0})-l_L(\psi_{\tau}^{-1}\tilde{\phi}_H^k\psi_{\tau})+k\max_{M}|H\psi_{\tau}-H\psi_0|+\tau^2 \\
&\leq 2k\max_{M}|H\psi_{\tau}-H\psi_0|+\tau^2. \nonumber 
\end{align}
The last inequality makes use of the Lipschitz property of Lagrangian spectral invariants (see \cite{LeclercqZapolsky15}).
Choose now a sequence $(\tau_{\varsigma})_{\varsigma \in \N}\subset (0,1)$ such that $\tau_{\varsigma} \downarrow 0$ and 
\begin{equation}
\label{eq17}
\frac{a_k(c+\tau_{\varsigma} c')-a_k(c)}{\tau_{\varsigma}}\stackrel{\varsigma \to \infty}{\longrightarrow} \liminf_{\tau \downarrow 0}\frac{a_k(c+\tau c')-a_k(c)}{\tau}
\end{equation}
For each $\tau_{\varsigma}$ we find a Floer trajectory $u^{\varsigma}:=u^{\tau_{\varsigma}}$ meeting the requirement (\ref{eq4}) with $\tau=\tau_{\varsigma}$. Applying Gromov convergence to the sequence $(u^{\varsigma})_{\varsigma \in \N}$ we obtain a subsequence (still denoted by $(u^{\varsigma})_{\varsigma \in \N}$) which $C^{\infty}_{loc}$-converges. This follows from a result due to Hofer \cite[Proof of Proposition 2]{Hofer88} saying that, in our setup, the obstruction to $C^{\infty}_{loc}$-precompactness is "bubbling off" of a non-constant holomorphic disc or sphere. However, this cannot occur to $(u^{\varsigma})_{\varsigma \in \N}$ since the estimate (\ref{eq15}) implies 
\[
E_{J^{\tau_{\varsigma}}}(u^{\varsigma})\stackrel{\varsigma \to \infty}{\longrightarrow} 0.
\]
By Fatou's lemma, this also implies that the pointwise limit $u$ of $(u^{\varsigma})_{\varsigma \in \N}$ is an $s$-independent map given by $u(s,t)=u(t)=\phi_{kH\psi_0}^t(y)$ for some $y\in L$. 
Now, using $\chi' \geq 0$ and the mean value theorem, we find for every $(s,t)\in \R \times [0,1]$ a $\tau'_{\varsigma}(s,t)\in (0,\tau_{\varsigma})$ such that
\begin{align*}
\left| \chi'(s)\left( \frac{H\psi_{\tau_{\varsigma}}(u^{\varsigma}(s,t))-H\psi_{0}(u^{\varsigma}(s,t))}{\tau_{\varsigma}} \right) \right|&= \chi'(s)|\langle dH,X_{\tau_{\varsigma}'(s,t)} \rangle|(\psi_{\tau_{\varsigma}'(s,t)}(u^{\varsigma}(s,t))) \\
&\leq \chi'(s)\max_{(\tau,z)\in [0,1]\times M}|\langle dH,X_{\tau} \rangle|(z)
\end{align*}
Since $h$ is a Dini subdifferential of $a_k$ at $c$ and 
\[
\int_{-\infty}^{\infty} \int_{0}^{1}\chi'(s)\max_{(\tau,z)\in [0,1]\times M}|\langle dH,X_{\tau} \rangle|(z) \ dtds=\max_{(\tau,z)\in [0,1]\times M}|\langle dH,X_{\tau} \rangle|(z)<\infty,
\]
we can apply Lebesgue dominated convergence in the estimates (\ref{eq9}) to conclude that 
\begin{align*}
\langle c', h \rangle &\leq \lim_{\varsigma \to \infty} \int_{-\infty}^{\infty}\int_0^1 \chi'(s)\left( \frac{H\psi_{\tau_{\varsigma}}(u^{\varsigma}(s,t))-H\psi_{0}(u^{\varsigma}(s,t))}{\tau_{\varsigma}} \right) dtds \\
&= \int_{-\infty}^{\infty}\int_0^1 \chi'(s)\lim_{\varsigma \to \infty}\left( \frac{H\psi_{\tau_{\varsigma}}(u^{\varsigma}(s,t))-H\psi_{0}(u^{\varsigma}(s,t))}{\tau_{\varsigma}} \right) dtds \\
&=\int_{-\infty}^{\infty}\int_0^1 \chi'(s) \langle dH,X_0 \rangle  \psi_0(u(t)) dtds =\int_0^1 \langle dH,X_0 \rangle  \phi_{H}^{kt}\psi_0(y)dt,
\end{align*}
where we use the fact that $\psi_0\phi_{kH\psi_0}^{t}(y)=\phi_{kH}^{t}\psi_0(y)=\phi_{H}^{kt}\psi_0(y)$. By changing variables to $s=kt$ one now concludes that (\ref{eq12}) holds for $x:=\psi_0(y)\in \psi_0(L)$. In order to find a capping $\widehat{\gamma}$ for $\gamma(t)=\phi_H^t(x),\ t\in [0,k]$ such that (\ref{eq5}) holds, we first note that
\begin{equation}
\label{eq16}
\mathcal{A}_{kH\psi_0:L}(u^{\varsigma}_{-\infty}),\mathcal{A}_{kH\psi_{\tau_{\varsigma}}:L}(u^{\varsigma}_{\infty}) \stackrel{\varsigma \to \infty}{\longrightarrow} l_L(\psi_{0}^{-1}\tilde{\phi}_H^k\psi_{0}),
\end{equation}
which is easily seen using the estimates (\ref{eq4}) and (\ref{eq30}). By $C^{\infty}_{loc}$-convergence we may choose $\varsigma$ so large that the curve $[0,1]\ni t \mapsto u^{\varsigma}_s(t)=u^{\varsigma}(s,t)$ is $C^0$-close to $u$ for every $s\in [-2,2]$. In particular we can choose $\varsigma$ so large that the areas of the two cappings of $u$, obtained by adjusting the cappings of $u^{\varsigma}_{-2}$ and $u^{\varsigma}_{2}$ given by $u^{\varsigma}$ slightly, differ by less than $\tau_L$. Hence, by monotonicity of $L$ they have the same area and thus define a class of cappings $\widetilde{u}$ of $u$. Given $\epsilon >0$ we can in addition achieve 
\[
|\mathcal{A}_{kH\psi_0}(u^{\varsigma}_{-2})-\mathcal{A}_{kH\psi_0}(\widetilde{u})|<\epsilon \quad \text{and}\quad |\mathcal{A}_{kH\psi_{\tau_{\varsigma}}}(u^{\varsigma}_{2})-\mathcal{A}_{kH\psi_{\tau_{\varsigma}}}(\widetilde{u})|<\epsilon.
\]
Since $\mathcal{A}_{kH\psi_{\tau_{\varsigma}}}(\widetilde{u}) \to \mathcal{A}_{kH\psi_{0}}(\widetilde{u})$ for $\varsigma \to \infty$ these estimates together with
\begin{align*}
0&\leq \mathcal{A}_{kH\psi_{\tau_{\varsigma}}:L}(u^{\varsigma}_2)-\mathcal{A}_{kH\psi_{\tau_{\varsigma}}:L}  (u^{\varsigma}_{\infty})\leq E_{J^{\tau_{\varsigma}}}(u^{\varsigma}) \stackrel{\varsigma \to \infty}{\longrightarrow} 0 \\
0&\leq \mathcal{A}_{kH\psi_{\tau_0}:L}(u^{\varsigma}_{-\infty})-\mathcal{A}_{kH\psi_{0}:L}  (u^{\varsigma}_{-2})\leq E_{J^{\tau_{\varsigma}}}(u^{\varsigma}) \stackrel{\varsigma \to \infty}{\longrightarrow} 0
\end{align*}
and (\ref{eq16}) imply that 
\[
\mathcal{A}_{kH\psi_{0}}(\widetilde{u})=l_L(\psi_{0}^{-1}\tilde{\phi}_H^k\psi_{0}).
\]
Hence, the cappings of $\widetilde{u}$ give rise to a capping $\widehat{\gamma}$ of $\gamma$ such that (\ref{eq5}) holds.

Let's now discuss how to deal with the case when assumption (\ref{eq13}) is violated. Then we proceed as follows: Again we choose a sequence $(\tau_{\varsigma})_{\varsigma \in \N}$ such that $\tau_{\varsigma} \downarrow 0$ and (\ref{eq17}) is satisfied as well as a path $J=\{ J_{t} \}_{t \in [0,1]}$ of $\omega$-compatible almost complex structures. For each $\varsigma$ we choose $H^{\varsigma}\in C^{\infty}([0,1]\times M)$ satisfying
\begin{equation}
\label{eq18}
|H^{\varsigma}-kH|_{C^{\infty}([0,1]\times M)}\leq \tau_{\varsigma}^2
\end{equation}
as well as
\[
\phi_{H^{\varsigma}\psi_0}^1(L)\pitchfork L \quad \text{and} \quad \phi_{H^{\varsigma}\psi_{\tau_{\varsigma}}}^1(L)\pitchfork L.
\]
Now choose paths of compatible almost complex structures $J^{\varsigma}=\{ J^{\varsigma}_t \}_{\tau \in [0,1]}$ which are regular in the sense that we have well-defined Floer chain complexes 
\[
(CF(L{:}H^{\varsigma}\psi_0, J^{\varsigma}),d), \quad  (CF(L{:}H^{\varsigma}\psi_{\tau_{\varsigma}}, J^{\varsigma}),d)
\]
and such that $J^{\varsigma}$ satisfies
\[
|J^{\varsigma}-J|_{C^{\infty}([0,1]\times M)}\leq \tau_{\varsigma}^2.
\]
For each $\varsigma$ we obtain by the above procedure a Floer trajectory $u^{\varsigma}:=u^{\tau_{\varsigma}}$ such that (\ref{eq4}) is satisfied with $H=H^{\varsigma}$ and $\tau=\tau_{\varsigma}$. In particular (\ref{eq15}) follows with $H=H^{\varsigma}$ and $\tau=\tau_{\varsigma}$. Using (\ref{eq9}) and (\ref{eq18}) as well as the Lipschitz property of Lagrangian spectral invariants \cite{LeclercqZapolsky15}, we can estimate 
\begin{align}
\frac{a_k(c+\tau_{\varsigma} c')-a_k(c)}{\tau_{\varsigma}}&=\frac{l_L(\psi_{\tau}^{-1}\tilde{\phi}_H^k\psi_{\tau})-l_L(\psi_{0}^{-1}\tilde{\phi}_H^k\psi_{0})}{k\tau} \nonumber \\ 
&\leq \frac{l_L(\psi_{\tau}^{-1}\tilde{\phi}_{H^{\varsigma}}^1\psi_{\tau})-l_L(\psi_{0}^{-1}\tilde{\phi}_{H^{\varsigma}}^1\psi_{0})}{k\tau_{\varsigma}}+\frac{2\tau_{\varsigma}}{k} \label{eq19} \\
&\leq \frac{1}{k}\int_{-\infty}^{\infty}\int_0^1 \chi'(s)\left( \frac{H_t^{\varsigma}\psi_{\tau_{\varsigma}}(u^{\varsigma}(s,t))-H_t^{\varsigma}\psi_{0}(u^{\varsigma}(s,t))}{\tau_{\varsigma}} \right) dtds +\frac{3\tau_{\varsigma}}{k} \nonumber  \\
&\leq \int_{-\infty}^{\infty}\int_0^k \chi'(s)\left( \frac{H\psi_{\tau_{\varsigma}}(u^{\varsigma}(s,t))-H\psi_{0}(u^{\varsigma}(s,t))}{\tau_{\varsigma}} \right) dtds +\frac{5\tau_{\varsigma}}{k}. \nonumber
\end{align}
Again, by applying Hofer's compactness argument \cite[Proof of Proposition 2]{Hofer88}, we may assume that $(u_{\varsigma})_{\varsigma \in \N}$ $C^{\infty}_{loc}$-converges. Exactly as above the pointwise limit $u$ is a $s$-constant map given by $u(t)=\phi^{kt}_{H\psi_0}(y)$ for some $y\in L$. Via dominated convergence the estimate (\ref{eq19}) now gives
\[
\langle c', h \rangle \leq \frac{1}{k}\int_{0}^{k}\langle dH,X_0 \rangle \phi_H^t(x)\ dt
\]
for $x=\psi_0(y)\in \psi_0(L)$ and the existence of a capping is obtained exactly as before. This finishes the proof.
\end{proof}
\begin{rem}
	\label{Mrem1}
	Suppose now that $\omega=d\lambda$ is exact with $\lambda|_L=df$ for some $f\in C^{\infty}(L)$. In this case we need to relate the action appearing in (\ref{eq5}) to the one appearing in (\ref{eq8}). Choose a closed one form $\eta$ in the class $c\in H^1(M;\R)$. An easy computation shows that $[\lambda|_{\psi_0(L)}]=[\eta|_{\psi_0(L)}]\in H^1(\psi_0(L);\R)$, so in fact $\eta$ can be chosen to satisfy $(\lambda-\eta)|_{\psi_0(L)}\equiv 0$. In particular we then have
	\[
	\int \widehat{\gamma}^*\omega=\int d\widehat{\gamma}^*(\lambda-\eta)=\int_0^k \langle \lambda, X_H \rangle \gamma(t)dt - \int_0^k \langle \eta, X_H \rangle \gamma(t)dt,
	\]
	so (\ref{eq5}) reads
	\begin{equation}
	\label{Mateq901}
	a_k(c)=\mathcal{A}_{H,\lambda}(\mu_{\gamma})+\int \langle \eta, X_H \rangle d\mu_{\gamma},
	\end{equation}
	where $\mu_{\gamma}$ is the Borel probability measure obtained by pushing forward the normalized Lebesgue measure on $[0,k]$ to $M$ via $\gamma$.
\end{rem}
Though we are mainly interested in the Clarke subdifferentials of $\alpha_{H:L}$, the proof of Theorem \ref{thm1} goes via the so-called \emph{approximate subdifferential} due to Ioffe \cite{Ioffe84}.
\begin{prop}
	\label{prop3}
	Let $h\in \partial_A a_k(c)\subset H_1(M;\R)$ be an approximate subdifferential of $a_k$ at $c=\sum_{l=1}^{m}\kappa_l e_l\in H^1(M;\R)$.\footnote{See (\ref{eq222}).} Given a fixed $c'=\sum_{l=1}^{m}\kappa'_l e_l\in H^1(M;\R)$ we consider the symplectic isotopy (\ref{eq11}) and denote its infinitesimal generator by $X_{\tau}$. Then there exists a point $x\in L$ such that the curve $[0,k]\ni t\mapsto \gamma(t)=\phi_{H\psi_0}^t(x)$ satisfies (\ref{eq12}) and (\ref{eq5}).
\end{prop}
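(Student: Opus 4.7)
The plan is to upgrade Proposition \ref{prop2} from the Dini subdifferential $\partial^-$ to the approximate subdifferential $\partial_A$ by a compactness argument along an approximating sequence $c_\varsigma\to c$. Unpacking definition (\ref{eq222}), choose sequences $c_\varsigma\to c$ in $H^1(M;\R)$ and $h_\varsigma\in\partial^- a_k(c_\varsigma)$ with $h_\varsigma\to h$ in $H_1(M;\R)$. For each $\varsigma$, let $\psi_{\tau,\varsigma}$ be the isotopy (\ref{eq11}) built from $c_\varsigma$ and the fixed direction $c'$, with infinitesimal generator $X_{\tau,\varsigma}$. By smooth dependence of (\ref{eq11}) on its parameters, $\psi_{\tau,\varsigma}\to\psi_{\tau,c}$ in $C^\infty$ uniformly in $\tau\in[0,1]$, and consequently $X_{\tau,\varsigma}\to X_\tau$ smoothly.

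For each $\varsigma$, I would apply Proposition \ref{prop2} to $(c_\varsigma,h_\varsigma,c')$ to extract a point $x_\varsigma\in L$, the curve $\gamma_\varsigma(t)=\phi_{H\psi_{0,\varsigma}}^t(x_\varsigma)$, and a capping $\widehat{\gamma}_\varsigma$ satisfying the versions of (\ref{eq12}) and (\ref{eq5}) at index $\varsigma$. Compactness of $L$ yields a subsequence with $x_\varsigma\to x\in L$; combined with the convergence of the isotopies and smooth dependence of the Hamiltonian flow on its generator, $\gamma_\varsigma$ converges uniformly on $[0,k]$ to $\gamma(t)=\phi_{H\psi_0}^t(x)$. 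Dominated convergence applied to
\[
\langle c',h_\varsigma\rangle \leq \frac{1}{k}\int_0^k \langle dH,X_{0,\varsigma}\rangle \phi_H^t\psi_{0,\varsigma}(x_\varsigma)\,dt
\]
then yields (\ref{eq12}) for $(c,h,x)$ in the limit.

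For the capping, I would rewrite (\ref{eq5}) at index $\varsigma$ as
\[
\tfrac{1}{k}\int \widehat{\gamma}_\varsigma^*\omega \;=\; \tfrac{1}{k}\int_0^k H(\gamma_\varsigma(t))\,dt - a_k(c_\varsigma),
\]
whose right-hand side converges (by uniform convergence of $\gamma_\varsigma$ and Lipschitz continuity of $a_k$ from Lemma \ref{lemVichery}) to $A:=\tfrac{1}{k}\int_0^k H(\gamma(t))\,dt-a_k(c)$. Attaching to each $\widehat{\gamma}_\varsigma$ a thin interpolating collar that drags its boundary from $\psi_{0,\varsigma}(L)$ to $\psi_0(L)$ and straightens $\gamma_\varsigma$ to $\gamma$ produces cappings of $\gamma$ with symplectic areas $A_\varsigma+o(1)$. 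The main obstacle is to extract from these perturbed surfaces a genuine capping of $\gamma$ realizing exactly the area $kA$ in the limit; here monotonicity of $L$ (minimal Maslov number $\geq 2$) is decisive, since it forces the set of admissible capping areas for the fixed endpoint path $\gamma$ to form a discrete affine lattice in $\R$. Consequently $A_\varsigma+o(1)$ must eventually equal $kA$, providing the desired $\widehat{\gamma}$ and hence (\ref{eq5}) for $(c,\gamma,\widehat{\gamma})$.
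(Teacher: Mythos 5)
Your proof follows the same diagonal--compactness strategy as the paper's: choose $c_\varsigma\to c$ at which $h$ is a Dini subdifferential, apply Proposition~\ref{prop2} at each $\varsigma$ to produce points $x_\varsigma$, orbit segments $\gamma_\varsigma$ and cappings $\widehat{\gamma}_\varsigma$, and pass to the limit using compactness and the local Lipschitz property of $a_k$. The paper compresses the limit step into a single sentence, whereas you spell out the convergence of the cappings via the discreteness of admissible capping areas forced by monotonicity of $L$ --- this elaboration is correct and is in fact the same mechanism the paper already deploys inside its proof of Proposition~\ref{prop2}.
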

\begin{proof}
By (\ref{eq222}) there exists a sequence $(c_{\varsigma})_{\varsigma \in \N}\subset H^1(M;\R)$ such that $h$ is a Dini subdifferential of $a_k$ at $c_{\varsigma}$ and
\begin{align}
\label{eq14}
c_{\varsigma}\stackrel{\varsigma \to \infty}{\longrightarrow}c
\end{align}
Writing $c_{\varsigma}=\sum_{l=1}^m \xi_l^{\varsigma} e_l$ we define the symplectic isotopy
\[
\upsilon^{\varsigma}_{\tau}:=\psi^1_{\xi_1^{\varsigma}+\tau \kappa_1'}\cdots \psi^m_{\xi_m^{\varsigma}+\tau \kappa_m'}
\]
and denote by $X^{\varsigma}_{\tau}$ the infinitesimal generator of $\tau \mapsto \upsilon^{\varsigma}_{\tau}$. By Proposition \ref{prop2} there exists a sequence of points $(x_{\varsigma})_{\varsigma \in \N}$ with $x_{\varsigma}\in \upsilon_{0}^{\varsigma}(L)$ such that the curves $[0,k]\ni t\mapsto \gamma_{\varsigma}(t):=\phi_{H}^t(x_{\varsigma})$ have cappings $\widehat{\gamma}_{\varsigma}$ for which
\begin{align*}
\langle c',h \rangle &\leq \frac{1}{k}\int_{0}^{k}\langle dH,X^{\varsigma}_0\rangle (\gamma_{\varsigma}(t))dt \\
a_k(c_{\varsigma})&=\frac{1}{k}\int_0^kH(\gamma_{\varsigma}(t))dt-\frac{1}{k}\int \widehat{\gamma}_{\varsigma}^*\omega.
\end{align*}
By compactness of $M$, we may assume that $x_{\varsigma}\to x$ for some $x\in M$, after passing to a subsequence. The claim now follows by passing to the limit, using the fact that $a_k$ is locally Lipschitz with respect to any norm on $H^1(M;\R)$.
\end{proof}

\begin{proof}[Proof of Theorem \ref{thm1}]
Consider the set 
\[
K:=\{ \rho(\mu)\in H_1(M;\R) \ |\ \mu \in \mathcal{M}(\phi_H)\}
\]
of rotation vectors realized by $\mathcal{M}(\phi_H)$-measures. Clearly $K\subset H_1(M;\R)$ is non-empty, convex and compact. Hence, $K$ is characterized by its \emph{support function} \cite[Section 1.7.1]{Schneider14} $\chi_K:H^1(M;\R) \to \R$, defined by
\[
\chi_K(c'):=\max_{h\in K}\langle c',h \rangle, \quad c'\in H^1(M;\R).
\]
The characterization is that $h\in K$ if and only if
\begin{equation}
\label{eq25}
\langle c',h \rangle \leq \chi_K(c') \quad \forall \ c'\in H^1(M;\R).
\end{equation}
We need to show that $\partial \sigma_{H:L}(c)\subset K$ for all $c\in H^1(M;\R)$. In fact, by property c) of Theorem \ref{prop1}, it suffices to show that $\partial \sigma_{H:L}(0)\subset K$. Moreover, since $\sigma_{H:L}$ is locally Lipschitz and $K$ is convex and closed, it suffices (by (\ref{eq111})) to show that
\begin{equation}
\label{eq24}
\partial_A \sigma_{H:L}(0) \subset K.
\end{equation}
Fix some $h\in \partial_A \sigma_{H:L}(0)$. By the above discussion we need to show that (\ref{eq25}) holds, so choose some $c'=\sum_{l=1}^m \kappa_l' e_l  \in H^1(M;\R)$.
By Lemma \ref{lemVichery} $a_k \to \sigma_{H:L}$ uniformly on compact subsets. Hence, applying (\ref{eq333}) we can find an increasing sequence $(k_{\varsigma})_{\varsigma \in \N} \subset \N$ as well as sequences $(c_{\varsigma})_{\varsigma \in \N}\subset H^1(M;\R)$ and $(h_{\varsigma})_{\varsigma \in \N}\subset H_1(M;\R)$ such that $h_{\varsigma}\in \partial_A a_{k_{\varsigma}}(c_{\varsigma})$ and
\begin{equation*}
c_{\varsigma} \stackrel{\varsigma \to \infty}{\longrightarrow} 0 \quad \text{and} \quad h_{\varsigma} \stackrel{\varsigma \to \infty}{\longrightarrow} h.
\end{equation*}
Writing $c_{\varsigma}=\sum_{l=1}^m\kappa_l^{\varsigma}e_l$ we define a symplectomorphism $\psi_{\varsigma}$ by 
\[
\psi_{\varsigma}:=\psi^1_{\kappa_1^{\varsigma}} \cdots \psi^m_{\kappa_m^{\varsigma}}
\]
and denote by $X^{\varsigma}_{\tau}$ the infinitesimal generator of the symplectic isotopy
\[
\tau \mapsto \psi^1_{\kappa_1^{\varsigma}+\tau \kappa_1'} \cdots \psi^m_{\kappa_m^{\varsigma} +\tau \kappa_m'}
\]
Applying Proposition \ref{prop3} we obtain for each $\varsigma$ the existence of $x_{\varsigma}\in \psi_{\varsigma}(L)$ such that 
\[
\langle c', h_{\varsigma} \rangle \leq \frac{1}{k_{\varsigma}} \int_0^{k_{\varsigma}}\langle dH,X_{0}^{\varsigma}\rangle \phi_H^t(x_{\varsigma})\ dt.
\]
Denote by\footnote{Recall that $\mathcal{M}$ denotes the space of compactly supported Borel probability measures on $M$.} $\mu_{\varsigma}\in \mathcal{M}$ the unique probability measure characterized by 
\[
\int f\ d\mu_{\varsigma}=\frac{1}{k_{\varsigma}} \int_0^{k_{\varsigma}}f\phi_H^t(x_{\varsigma})\ dt \quad \forall \ f\in C^0(M),
\]
so that 
\begin{equation}
\label{eq26}
\langle c', h_{\varsigma} \rangle \leq \int \langle dH,X_{0}^{\varsigma}\rangle \ d\mu_{\varsigma}.
\end{equation}
By weak$^*$-compactness of $\mathcal{M}$ we may after passing to a subsequence (still denoted by $(\mu_{\varsigma})_{\varsigma \in \N}$) assume that 
\[
\mu_{\varsigma}\stackrel{w^*}{\rightharpoonup} \mu \in \mathcal{M}.
\]
By the classical Krylloff-Bogoliouboff argument \cite{KryloffBogoliouboff37} $\mu$ is $\phi_H$-invariant (see also \cite[Proposition 3.1.1]{Sorrentino15}). Passing to the limit in (\ref{eq26}) we conclude that 
\begin{equation}
\label{eq31}
\langle c',h \rangle \leq \int \langle dH,Y\rangle \ d\mu,
\end{equation}
where 
\[
Y=\sum_{l=1}^m \kappa_l'X_{\eta_l}.
\]
In particular (using $\langle dH,X_{\eta_l} \rangle =\langle \eta_l,X_H \rangle$) we obtain 
\begin{equation*}
\langle  c',h \rangle \leq \sum_{l=1}^m \kappa_l'  \int  \langle \eta_l,X_H \rangle d\mu=\sum_{l=1}^m \kappa_l' \langle e_l,\rho(\mu) \rangle=\langle c',\rho(\mu) \rangle \leq \chi_K(c').  
\end{equation*}
Since $c'\in H^1(M;\R)$ was arbitrary we conclude that (\ref{eq25}) holds, so $h\in K$ and the proof is done.
\end{proof}

We will now discuss the adaptions required to prove the results in the non-compact setting from Section \ref{noncompact}.
\begin{proof}[Sketch of proof of Theorem \ref{thm2} and Corollary \ref{cor3}]
	Recall that in the non-compact setting we consider a Liouville manifold $(M,\omega=d\lambda)$ and assume that $\lambda|_L=df$ for some $f\in C^{\infty}(M)$. One then defines each $a_k$ exactly as above, except that $\eta_1,\ldots , \eta_m$ are closed 1-forms which represent a basis for $H^1_{dR}(M;\R)$ and satisfy condition (\ref{Meq1}). Now Proposition \ref{prop2} and \ref{prop3} hold exactly as in the compact setting. Fix some $c=\sum_{l=1}^m\kappa_l [\eta_l]\in H^1_{dR}(M;\R)$ and denote by $C\subset M$ the closure of the smallest $\phi_H$-invariant set containing the subset
	\[
	\bigcup_{(s_1,\ldots, s_m)\in [0,1]^m}\! \! \! \! \! \psi^1_{\kappa_1+s_1}\cdots \psi^m_{\kappa_m+s_m}(L).
	\]
	Since $H\in \mathcal{H}$, $C$ is a compact subset of $M$. Now define 
	\[
	K:=\{\rho(\mu)\in H_1(M;\R)\ |\ \mu \in \mathcal{M}(C;\phi_H) \ \text{with}\ \mathcal{A}_{H,\lambda}(\mu)+\langle c, \rho(\mu) \rangle =\sigma_{H:L}(c) \}.
	\]
	$K$ is again a convex compact subset of $H_1(M;\R)$, so it is characterized by its support function $\chi_K:H^1(M;\R)\to \R$. Given $h\in \partial_A\sigma_{H:L}(c)$ and $c'\in H^1(M;\R)$, an argument similar to that of the proof of Theorem \ref{thm1} shows the existence of a $\mu \in \mathcal{M}(C;\phi_H)$ satisfying 
	\begin{equation}
	\label{Mateq32}
	\langle c',h\rangle \leq \langle c',\rho(\mu) \rangle.
	\end{equation}
	Moreover, applying the statement of Proposition \ref{prop3}\footnote{More precisely we make use of the fact that (\ref{Mateq901}) holds.} together with the weak$^*$-convergence it follows that
	\[
	\sigma_{H:L}(c)=\mathcal{A}_{H,\lambda}(\mu)+\langle c,\rho(\mu) \rangle,
	\]
	which implies that $\rho(\mu)\in K$ and (\ref{Mateq32}) now implies $\langle c',h\rangle \leq \chi_K(c')$. Since $c'\in H^1(M;\R)$ was arbitrary this shows that $h\in K$, finishing the proof of Theorem \ref{thm2}. The proof of Corollary \ref{cor3} is identical to that of Corollary \ref{lem1}.
\end{proof}
\begin{rem}
\label{rem1}
Suppose each $a_k$ is $C^1$, $\alpha_{H:L}$ is $C^1$ and that $(a_k)_{k\in \N}$ $C^1$-converges to $\alpha_{H:L}$. In this setting one can avoid applying Jourani's result and "upgrade" the proof of Theorem \ref{thm1} to show the following: For every $c\in H^1(L;\R)$ there exists a measure $\mu \in \mathcal{M}(\phi_H)$ with rotation vector $\rho(\mu)=d\alpha_{H:L}(c)$, whose support satisfies
\[
\supp(\mu)\subset \overline{\bigcup_{t\in \R}\phi_H^t(L)}. 
\]
In particular, if $L$ is $\phi_H$-invariant then $\supp(\mu)\subset L$.
\end{rem}

\textbf{Acknowledgments}
I am very grateful to Vadim Kaloshin for teaching an excellent Nachdiplom lecture on Arnold Diffusion at the ETH Z{\"u}rich. Everything I know about Aubry-Mather theory I learned from him, and the insight that invariant measures should be "wild" in the presence of superconductivity channels was generously suggested to me by him. I am also very grateful to Leonid Polterovich. Several of the ideas (in particular the applications to twisted cotangent bundles) were generously suggested by him. I thank Paul Biran and Will Merry for helpful discussions and I am grateful to Nicolas Vichery, Vincent Humili{\`e}re and Claude Viterbo for their interest in the paper. Last (but certainly not least) I thank the extremely careful anonymous referee for lots of high-quality input. 

Part of this research was carried out during visits to Tel Aviv University and University of Maryland at College Park. I thank Leonid and Vadim for the opportunities to make these visits as well as both universities for truly excellent research atmospheres.

\bibliographystyle{plain}
\bibliography{BIB.bib}
\end{document}